\newcommand{\df}{\mathrm{d}}
\newcommand{\PR}{\mathcal{R}}
\newcommand{\PD}{\mathcal{D}}
\newcommand{\PS}{\mathcal{S}}
\newcommand{\PM}{\mathcal{M}}
\newcommand{\VD}{V_{\scriptsize\mbox{D}}}
\newcommand{\VR}{V_{\scriptsize\mbox{R}}}
\newcommand{\VM}{V_{\scriptsize\mbox{M}}}
\newcommand{\VS}{V_{\scriptsize\mbox{S}}}
\newcommand{\rhoD}{\rho_{\scriptsize\mbox{D}}}
\newcommand{\rhoR}{\rho_{\scriptsize\mbox{R}}}
\newcommand{\rhoM}{\rho_{\scriptsize\mbox{M}}}
\newcommand{\rhoS}{\rho_{\scriptsize\mbox{S}}}
\newcommand{\SD}{\Sigma_{\scriptsize\mbox{D}}}
\newcommand{\SR}{\Sigma_{\scriptsize\mbox{R}}}
\newcommand{\SM}{\Sigma_{\scriptsize\mbox{M}}}
\newcommand{\SigS}{\Sigma_{\scriptsize\mbox{S}}}
\newcommand{\MRspace}{M_{\scriptsize\mbox{R}}}
\newcommand{\X}{\mathsf{X}}
\newcommand{\B}{\mathcal{B}}
\newcommand{\pcite}[1]{\citeauthor{#1}'s \citeyearpar{#1}}
\theoremstyle{plain}
\newtheorem{theorem}{Theorem}[section]
\newtheorem{lemma}[theorem]{Lemma}
\newtheorem{corollary}[theorem]{Corollary}
\newtheorem{proposition}[theorem]{Proposition}
\newtheorem{example}[theorem]{Example}
\newtheorem{remark}[theorem]{Remark}
\title{\bf Analysis of two-component Gibbs samplers using the theory of two projections}
\author{Qian Qin \\ School of Statistics \\ University of Minnesota}
\date{}
\begin{document}

	\maketitle
		
		\begin{abstract}
			The theory of two projections is utilized to study two-component Gibbs samplers.
			Through this theory, previously intractable problems regarding the asymptotic variances of two-component Gibbs samplers are reduced to elementary matrix algebra exercises.
			It is found that in terms of asymptotic variance, the two-component random-scan Gibbs sampler is never much worse, and could be considerably better than its deterministic-scan counterpart, provided that the selection probability is appropriately chosen.
			This is especially the case when there is a large discrepancy in computation cost between the two components. 
			The result contrasts with the known fact that the deterministic-scan version has a faster convergence rate, which can also be derived from the method herein. 
			On the other hand, a modified version of the deterministic-scan sampler that accounts for computation cost can outperform the random-scan version.
		\end{abstract}
		
%

	\section{Introduction}
	
	Gibbs samplers are a class of Markov chain Monte Carlo (MCMC) algorithms commonly used in statistics for sampling from intractable distributions \citep{gelfand1990sampling,casella1992explaining}.
	In this work, I will introduce a method for analyzing different variants of two-component Gibbs samplers via the theory of two projections developed by \cite{halmos1969two}.
	As a first application of this new method, I will conduct a detailed comparison between deterministic-scan and random-scan samplers, which I now define.

	Let $(\X_1,\B_1)$ and $(\X_2,\B_2)$ be measurable spaces and let $(\X,\B) = (\X_1 \times \X_2, \B_1 \times \B_2)$.
	Let~$\pi$ be a probability measure on $(\X,\B)$ that is the joint distribution of the random element $(X_1,X_2)$, where $X_i$ is $\X_i$-valued for $i=1,2$.
	For $x_1 \in \X_1$ and $x_2 \in \X_2$, let $\pi_2(\cdot\mid x_2)$ be the conditional distribution of $X_1\mid X_2=x_2$, and let $\pi_1(\cdot\mid x_1)$ be that of $X_2\mid X_1=x_1$.
	When one cannot sample from~$\pi$ directly, but can sample from $\pi_1(\cdot|x_1)$ and $\pi_2(\cdot|x_2)$, a two-component Gibbs sampler may be used to produce an approximate sample from~$\pi$.
	Although simple, Gibbs algorithms with two components are surprisingly useful in practice \citep{tanner1987calculation,albert1993bayesian,polson2013bayesian}.
	There are two basic forms of two-component Gibbs samplers.
	Each simulates a Markov chain that has~$\pi$ as a stationary distribution.
	Moreover, under mild conditions, the marginal distribution of the $t$th element of each Markov chain converges to~$\pi$ in some sense as $t \to \infty$ \citep{tierney1994markov,roberts2006harris}.

	The first type, called deterministic-scan Gibbs (DG) sampler, simulates a time-inhomogeneous Markov chain $(\tilde{X}_t)$ in the following fashion.
	($T$ here is the length of the simulation.)
	
	\bigskip
	
	\begin{algorithm}[H]
		\caption{DG sampler} \label{alg:dg}
		Draw $\tilde{X}_0 = (X_{1,0},X_{2,0})$ from some initial distribution on $(\X, \B)$, and
		set $t=0$\;
		
		\While{$t < T$}{
			\If{$t=2s$ for some non-negative integer~$s$}{
				draw $X_{2,t+1}$ from $\pi_1(\cdot\mid X_{1,t})$, set $X_{1,t+1} = X_{1,t}$, and let $\tilde{X}_{t+1} = (X_{1,t+1},X_{2,t+1})$\;
			}
			\If{$t=2s+1$ for some non-negative integer~$s$}{
				draw $X_{1,t+1}$ from $\pi_2(\cdot\mid X_{2,t})$, set $X_{2,t+1} = X_{2,t}$, and let $\tilde{X}_{t+1} = (X_{1,t+1},X_{2,t+1})$\;
			}
			set $t=t+1$ \;
		}
	\end{algorithm}
	
	\bigskip
	
	\begin{remark}
		A time-homogeneous version of the DG chain can be obtained through thinning.
		Indeed, it is common to discard $\tilde{X}_t$ when~$t$ is odd, and only use $(\tilde{X}_{2s})_{s=0}^{\lfloor T/2 \rfloor}$ as a Monte Carlo sample.
		However, thinning reduces the efficiency of the sampler \citep{maceachern1994subsampling}, and is often discouraged \citep{link2012thinning}.
	\end{remark}

	The second type is called the random-scan Gibbs (RG) sampler.
	To run the algorithm, one needs to specify a selection probability $r \in (0,1)$.
	The algorithm then simulates a time-homogeneous Markov chain $(\tilde{X}_t)$ in the following fashion.
	
	\bigskip
	
	\begin{algorithm}[H]
		\caption{RG sampler}
		Draw $\tilde{X}_0 = (X_{1,0},X_{2,0})$ from some initial distribution on $(\X, \B)$, and
		set $t=0$ \;
		
		\While{$t < T$}{
			draw $W$ from a $\mbox{Bernoulli}(r)$ distribution\;
			\If{$W=0$}{
				draw $X_{2,t+1}$ from $\pi_1(\cdot\mid X_{1,t})$, and set $X_{1,t+1}=X_{1,t}$\;
			}
			\If{$W=1$}{
				draw $X_{1,t+1}$ from $\pi_2(\cdot\mid X_{2,t})$, and set $X_{2,t+1}=X_{2,t}$\;
			}
			set $\tilde{X}_{t+1} = (X_{1,t+1},X_{2,t+1})$\;
			set $t=t+1$\;
		}
	\end{algorithm}
	
	\bigskip
	
	Both samplers generate new elements by updating $X_1$ using the conditional distribution of $X_1 \mid X_2$, and updating $X_2$ using the conditional distribution of $X_2 \mid X_1$.
	They only differ in terms of which component is updated in each iteration.
	
	One important question regarding the DG and RG samplers is which of them performs better.
	This problem is not unique to the two-component case, but it is in this case where substantial progress has been made, as I now describe.

	When comparing MCMC algorithms, there are two main aspects to consider: convergence speed and asymptotic variance \citep{jones2001honest}.
	Moreover, one must also account for computation cost, i.e., the time it takes to run one iteration of each algorithm.
	\cite{qin2021convergence} showed that the DG algorithm is better than the RG algorithm in terms of $L^2$ convergence rate.
	Their result takes computation time into account, and holds for every selection probability $r \in (0,1)$.
	As to asymptotic variance, existing comparisons are less conclusive.
	
	Consider a generic MCMC algorithm that simulates a Markov chain $(\tilde{X}_t)_{t=0}^{\infty}$ which converges to its stationary distribution~$\pi$.
	Then $(\tilde{X}_0,\dots,\tilde{X}_{T-1})$ forms an approximate Monte Carlo sample from~$\pi$.
	The Monte Carlo sample is usually used to estimate the mean of a function $f: \X \to \mathbb{R}$, i.e., $\pi f := \int_{\X} f(x) \pi(\df x)$.
	The usual estimator is the sample mean
	\[
	S_T(f) := \frac{1}{T} \sum_{t=0}^{T-1} f(\tilde{X}_t).
	\]
	Under regularity conditions, $S_T(f)$ is subject to the central limit theorem (CLT):
	\[
	\sqrt{T}[S_T(f) - \pi f] \xrightarrow{d} \mbox{N}[0, V(f)] \quad \mbox{as } T \to \infty.
	\]
	See \cite{dobrushin1956central}, \cite{greenwood1998information}, and \cite{jones2004markov} for Markov chain CLTs under various settings.
	$V(f)$ is called the asymptotic variance, and can usually be obtained from the formula 
	\[
	V(f) = \lim_{T \to \infty} T \, \mbox{var} [S_T(f)] = \lim_{T \to \infty} T^{-1} \sum_{t=0}^{T-1} \sum_{t'=0}^{T-1} \mbox{cov}[ f(\tilde{X}_t), f(\tilde{X}_{t'})] ,
	\]
	where $\tilde{X}_0$ can be assumed to follow~$\pi$.
	Disregarding computation cost, one may say the smaller $V(f)$ the better.
	Define $\VD(f)$ and $\VR(f,r)$ to be the asymptotic variances associated with~$f$ for the DG and RG sampler, respectively, where~$r$ is the selection probability.
	Exact formulas for these quantities will be given in Section~\ref{ssec:basic}.
	\cite{greenwood1998information} showed that $\VD(f) \leq \VR(f,1/2)$.
	See also \cite{andrieu2016random} where the result is extended to beyond Gibbs algorithms.
	As far as I am aware, no similar result existed for $r \neq 1/2$.
	Moreover, the arguments in \cite{greenwood1998information} and \cite{andrieu2016random} heavily rely on the symmetry of the RG sampler that arises only when $r = 1/2$, and it seems unlikely that they can be extended to the $r \neq 0.5$ case.
	This motivates the current work.

	To appreciate the potential benefits of using a selection probability~$r$ that is not $1/2$, one needs to think about computation cost.
	Consider this:
	If the time it takes to draw from $\pi_2(\cdot|x_2)$ is much longer than $\pi_1(\cdot|x_1)$ for $x_1 \in \X_1$ and $x_2 \in \X_2$, then one has an incentive to update $X_2$ using $\pi_1(\cdot|x_1)$ more frequently, and may want to use an RG sampler with a small~$r$ rather than the DG sampler.
	To conduct a concrete analysis, I will study the adjusted asymptotic variance, defined below.
	
	Suppose that on average, the aforementioned generic MCMC algorithm takes $\tau_0$ units of time to produce a sample point, so that after running the algorithm for $\tilde{\tau}$ units of time where $\tilde{\tau}$ is large, around $ \tilde{\tau}/\tau_0$ sample points are generated. 
	Then the sample mean has variance roughly equal to $\tau_0 V(f)/\tilde{\tau}$.
	Define the computation time adjusted asymptotic variance to be $V^{\dagger}(f) := \tau_0 V(f)$.
	Given a fixed amount of computation effort, $V^{\dagger}(f)$, instead of $V(f)$, is what we should focus on.
	Define $\VD^{\dagger}(f)$ and $\VR^{\dagger}(f,r)$ to be the adjusted asymptotic variance for the DG and RG samplers, respectively.
	It will be shown that when~$r$ is well chosen, $\VR^{\dagger}(f,r)$ can be much smaller than $\VD^{\dagger}(f)$ for certain functions~$f$, especially when the costs of drawing from $\pi_1$ and $\pi_2$ differ significantly.
	Moreover, for any function~$f$ such that $f(X)$ has finite second moment for $X \sim \pi$, under mild conditions, $\VR^{\dagger}(f,r) \leq 2 \VD^{\dagger}(f)$, given that~$r$ is well chosen.
	The appropriate value of~$r$ depends on the time it takes to draw from the two conditional distributions, and an explicit formula will be provided.
	These results mean that the RG sampler can outperform the DG algorithm by a large margin in some scenarios, while never being too much worse.
	This is in contrast with \pcite{qin2021convergence} result on convergence rate.
	However, as discussed in Section~\ref{ssec:da}, the DG sampler could still compete with RG in terms of adjusted asymptotic variance for certain problems.
	
	The DG sampler can be made more robust through a simple modification.
	Suppose that it is much less costly draw from $\pi_1$ compared to $\pi_2$.
	Then one may consecutively draw from $\pi_1$ many times before drawing once from $\pi_2$.
	This modified sampler, defined in Section~\ref{ssec:modified}, behaves similarly to an RG sampler with some well-chosen~$r$ in terms of adjusted asymptotic variance.
	Moreover, it is possible to parallelize parts of the modified DG algorithm to make it even more efficient.

	Aside from acquiring the more specific results described above, a central goal of this paper is to demonstrate how some tools in classical linear algebra can trivialize difficult problems concerning two-component Gibbs.
	The key is \pcite{halmos1969two} theory of two projection operators.
	It is known that the two conditional distributions in a two-component Gibbs sampler correspond to two orthogonal projections on some function space \citep{greenwood1998information,diaconis2010stochastic}.
	\cite{halmos1969two} gave block matrix representations for any given pair of orthogonal projections.
	Using these representations, one can obtain explicit formulas for the asymptotic variance of a given Gibbs sampler.
	The framework is powerful because it reduces the problem at hand to simple matrix calculations.
	I am unaware of previous works that analyze Gibbs samplers using \pcite{halmos1969two} theory.
	The tools developed here can also be utilized to study the convergence rates of Gibbs chains, and reproduce \pcite{qin2021convergence} result.
	Moreover, they can be used to study other variants of two-component Gibbs samplers, as demonstrated in the Supplement.
	Indeed, there are many potentially interesting variants of two-component Gibbs sampler besides the ones discussed here, and \pcite{halmos1969two} theory opens an avenue for studying them in a manner that was not possible before.
	
	
	The rest of the article is organized as follows.
	Section~\ref{sec:pre} provides the necessary preliminaries, including a short introduction to \pcite{halmos1969two} theory.
	Section~\ref{sec:main} illustrates the usefulness of \pcite{halmos1969two} theory via a comparison between the (standard and modified) DG and RG samplers in terms of adjusted asymptotic variance.
	A comparison concerning convergence rate is conducted in Section~\ref{sec:rate}.
	Also included in this section are some general formulas involving Markov chain convergence rate as described in the previous paragraph.
	Section~\ref{sec:discuss} contains some discussion.
	Some technical details are relegated to the Appendix and Supplement.
	The Supplement also contains an analysis of a fourth type of two-component Gibbs sampler using the theory of two projections.

	\section{Preliminaries} \label{sec:pre}
	
	\subsection{Basic properties of two-component Gibbs samplers} \label{ssec:basic}

	The transition laws of the DG and RG Markov chains are described by their Markov transition kernels, or Mtks.
	In general, an Mtk on $(\X,\B)$ is a function $K:\X \times \B \to [0,1]$ such that $K(x,\cdot)$ is a probability measure for $x \in \X$, and $K(\cdot,A)$ is a measurable function for $A \in \B$.
	If~$K$ and~$G$ are Mtks on $(\X,\B)$, then their mixture $a K + (1-a) G$ with $a \in [0,1]$ defines an Mtk such that
	\[
	[aK+(1-a)G](x,A) = a K(x,A) + (1-a) G(x,A), \quad \forall x \in \X, A \in \B,
	\]
	and their product $KG$ is defined to be an Mtk such that
	\[
	(KG)(x, A) = \int_{\X} K(x,\df x') G(x',A), \quad \forall x \in \X, A \in \B.
	\]
	For a non-negative integer~$t$, the $t$th power of an Mtk~$K$, $K^t$, is defined to be $\prod_{s=1}^t K$, so that $K^1 = K$, and $K^0(x,\cdot)$ is the point mass at~$x$.
	For a Markov chain $(\tilde{X}_t)$ that is possibly time-inhomogeneous (e.g., the DG chain), its $(t',t'+t)$-Mtk, where~$t$ and~$t'$ are non-negative integers, is an Mtk $K_{t',t'+t}$ such that $K_{t',t'+t}(x,\cdot)$ gives the conditional distribution of $\tilde{X}_{t'+t}$ given $\tilde{X}_{t'}=x$.
	In particular, call $K_t := K_{0,t}$ the $t$-step Mtk.
	If the chain is time-homogeneous with a single-step Mtk~$K$, then $K_{t',t'+t} = K_t = K^t$.
	If the distribution of $\tilde{X}_0$ is given by some probability measure~$\mu$, then
	\[
	(\mu K_t)(\cdot) := \int_{\X} \mu(\df x) K_t(x,\cdot)
	\]
	gives the marginal distribution of $\tilde{X}_t$.

	The conditional distribution $\pi_1(\cdot \mid \cdot)$ is assumed to have the following standard properties:
	For $x_1 \in \X_1$, $\pi_1(\cdot \mid x_1)$ is a probability measure on $\B_2$, and for $B \in \B_2$, $\pi_1(B \mid \cdot)$ is a measurable function on $\X_1$.
	Moreover, if $\mu_1$ is the marginal distribution of $X_1$, then, for any non-negative measurable $f: \X \to [0,\infty)$,
	\[
	\int_{\X} f(x_1,x_2) \, \pi(\df (x_1,x_2)) = \int_{\X_1} \left[ \int_{\X_2} f(x_1, x_2) \, \pi_1(\df x_2 \mid x_1) \right] \mu_1(\df x_1). 
	\]
	The conditional distribution $\pi_2$ is assumed to have analogous properties.
	
	Let us now write down the Mtks of two-component Gibbs samplers.
	For a point~$x$ in a generic measurable space, use $\delta_x(\cdot)$ to denote the point mass concentrated at~$x$.
	Define, for $(x_1,x_2) \in \X$ and $A \in \B$,
	\begin{equation} \label{eq:P1P2}
		\begin{aligned}
			P_1((x_1,x_2),A) &= \int_A \pi_1(\df x_2'\mid x_1) \delta_{x_1}(\df x_1'), \\
			P_2((x_1,x_2),A) &= \int_A \pi_2(\df x_1'\mid x_2) \delta_{x_2}(\df x_2').
		\end{aligned}
	\end{equation}
	Then $P_1$ characterizes the transition rule for updating $X_2$ through the conditional distribution of $X_2\mid X_1$, and $P_2$ does the same for updating $X_1$ through $X_1\mid X_2$.
	In other words, for $i \in \{1,2\}$, $P_i$ leaves $X_i$ the same and updates the other component.
	
	Consider first the Markov chain associated with the DG algorithm.
	Let $\PD_{t',t'+t}$ be its $(t',t'+t)$-Mtk, and let $\PD_t = \PD_{0,t}$.
	Then, for a non-negative integer~$s'$ and positive integer~$s$, the following hold:
	\begin{equation} \label{eq:PD}
		\begin{aligned}
			\PD_{2s',2(s'+s)-1} = \PD_{2s-1} = (P_1P_2)^{s-1} P_1, &\quad
			\PD_{2s',2(s'+s)} = \PD_{2s} = (P_1P_2)^s, \\
			\PD_{2s'+1,2(s'+s)} = \PD_{1,2s} = (P_2P_1)^{s-1} P_2, &\quad
			\PD_{2s'+1,2(s'+s)+1} = \PD_{1,2s+1} = (P_2P_1)^s.
		\end{aligned}
	\end{equation}
	On the other hand, the RG sampler simulates a time-homogeneous Markov chain, whose $t$-step Mtk is
	\begin{equation} \label{eq:PR}
		\PR_t := [(1-r)P_1 + rP_2]^t.
	\end{equation}

	One of the main goals of the current work is to conduct a comparison between the two algorithms in terms of asymptotic variance.
	As a shorthand notation, for any signed measure~$\mu$ and measurable function~$f$ on $(\X,\B)$, let $\mu f = \int_{\X} f(x) \mu(\df x)$ whenever the integral is well-defined.
	Fix a function $f: \X \to \mathbb{R}$ such that $\pi f^2 < \infty$, where $f^2(x) := f(x)^2$ for $x \in \X$.
	Consider the asymptotic variance of $S_T(f)$, where $S_T(f)$ is the Monte Carlo sample mean based on either the DG or RG sampler; see the Introduction.
	Without loss of generality, assume that $\pi f = 0$.
	Consider first the DG sampler.
	Denote by $(\tilde{X}_t)$ a DG Markov chain with $\tilde{X}_0 \sim \pi$.
	By Lemma 24 in \cite{maire2014comparison}, the asymptotic variance can be calculated as follows:
	\begin{equation} \label{eq:VD-1}
		\begin{aligned}
			\VD(f) 
			=& E[f(\tilde{X}_0)^2] + \sum_{s=1}^{\infty} E[f(\tilde{X}_0) f(\tilde{X}_{2s-1})] + \sum_{s=1}^{\infty} E[f(\tilde{X}_0) f(\tilde{X}_{2s})] \\
			& \qquad \qquad \,\, + \sum_{s=1}^{\infty} E[f(\tilde{X}_1) f(\tilde{X}_{2s})] + \sum_{s=1}^{\infty} E[f(\tilde{X}_1) f(\tilde{X}_{2s+1})].
		\end{aligned}
	\end{equation}
	See also \cite{greenwood1998information}.
	For the RG algorithm with selection probability $r \in (0,1)$, which simulates a time-homogeneous Markov chain, the asymptotic variance is a lot simpler.
	See, e.g., \cite{jones2004markov}, Corollary~1.
	Let $(\tilde{X}_t)$ instead be an RG chain with $\tilde{X}_0 \sim \pi$.
	Then the asymptotic variance is
	\begin{equation} \label{eq:VR-1}
		\begin{aligned}
			\VR(f,r) 
			=& E[f(\tilde{X}_0)^2] + 2 \sum_{t=1}^{\infty} E[f(\tilde{X}_0)  f(\tilde{X}_t)].
		\end{aligned}
	\end{equation}
	
	These two quantities will be analyzed in detail in the next subsection.
	As mentioned in the Introduction, one needs to take into account the computation costs of the two algorithms.
	Suppose that the time it takes to sample from $\pi_1(\cdot\mid x_1)$ is a constant (i.e., independent of $x_1$), and set this to be unit time.
	Suppose that the time it takes to sample from $\pi_2(\cdot\mid x_2)$ is also a constant, denoted by~$\tau$.
	Then, on average, the DG algorithm takes $(1+\tau)/2$ units of time to produce a sample point, whereas the RG algorithm takes $r \tau + 1-r$ to do so.
	Although the above assumption may not always hold in practice, the simplification is necessary for analyses herein.
	Multiplying the asymptotic variance of a sampler by the average time it takes to produce a sample point gives the computation time adjusted asymptotic variance.
	For the DG sampler, the adjusted asymptotic variance is
	\[
	\VD^{\dagger}(f) = \frac{(1+\tau)}{2} \VD(f).
	\]
	For the RG sampler, the adjusted asymptotic variance is
	\[
	\VR^{\dagger}(f,r) = (r\tau + 1-r) \VR(f,r).
	\]

	\subsection{Markov operators} \label{ssec:operator}

	Suppose that an Mtk~$K$ on $(\X,\B)$ has~$\pi$ as a stationary distribution in the sense that $\pi K = \pi$.
	Consider $L^2(\pi)$, the linear space of measurable functions $f: \X \to \mathbb{R}$ such that
	\[
	\|f\| := \sqrt{\pi f^2} < \infty.
	\]
	For $f,g \in L^2(\pi)$, say $f=g$ if $\|f-g\| = 0$, i.e., $f(x) = g(x)$ $\pi$-almost everywhere.
	Define an inner product
	\[
	\langle f, g \rangle = \int_{\X} f(x) g(x) \pi(\df x).
	\]
	Then $(L^2(\pi), \langle \cdot, \cdot \rangle)$ forms a real Hilbert space and $\|\cdot\|$ is the $L^2$ norm.
	It is convenient to define $L_0^2(\pi)$, the subspace of $L^2(\pi)$ that is orthogonal to constant functions.
	In other words, $L_0^2(\pi)$ consists of $f \in L^2(\pi)$ such that 
	\[
	\langle f, 1 \rangle = \pi f = 0.
	\]
	We can define a linear operation $f \mapsto Kf$ for $f \in L_0^2(\pi)$ in the following way:
	\[
	Kf(x) = \int_{\X} K(x,\df x') f(x'), \quad x \in \X.
	\]
	One can verify that $Kf \in L_0^2(\pi)$ whenever $f \in L_0^2(\pi)$.
	Moreover, by Cauchy-Schwarz, $\|Kf\| \leq \|f\|$.
	Therefore,~$K$ can be regarded as a bounded linear operator on $L_0^2(\pi)$.
	This is called a Markov operator.
	The operator associated with the mixture (product) of two Mtks is simply the mixture (product) of their operators.

	The Mtks~$P_1$ and~$P_2$, as defined in~\eqref{eq:P1P2}, satisfy $\pi P_1 = \pi P_2 = \pi$, and thus give rise to the following operators on $L_0^2(\pi)$:
	\begin{equation} \label{eq:P1P2-operator}
		\begin{aligned}
			P_1f(x_1,x_2) &= \int_{\X_2} f(x_1,x_2') \pi_1(\df x_2'\mid x_1), \\
			P_2f(x_1,x_2) &= \int_{\X_1} f(x_1',x_2) \pi_2(\df x_1'\mid x_2).
		\end{aligned}
	\end{equation}
	The Mtks of the DG and RG samplers, as given in~\eqref{eq:PD} and~\eqref{eq:PR}, can then be treated as linear operators on $L_0^2(\pi)$ as well.
	It is straightforward to verify that $P_1$ and $P_2$ are self-adjoint, i.e., for $i = \{1,2\}$ and $f,g \in L_0^2(\pi)$,
	\[
	\langle P_i f, g \rangle = \langle f, P_i g \rangle.
	\]
	Moreover, $P_1$ and $P_2$ are idempotent, i.e., $P_1^2 = P_1$ and $P_2^2 = P_2$.
	Indeed, for $i \in \{1,2\}$, $P_i$ is the orthogonal projection onto the space of functions in $L_0^2(\pi)$ that only depend on $x_i$.
	

	Various quantities of interest can be studied within the above operator theoretic framework.
	For instance, if $K(x,\cdot)$ is the conditional distribution of some random element~$Y$ given $X=x$, where $X \sim \pi$, then, for any $f,g \in L_0^2(\pi)$,
	\[
	\langle f, Kg \rangle = \mbox{cov}(f(X), g(Y)).
	\]
	In particular, for $f \in L_0^2(\pi)$, the asymptotic variances in~\eqref{eq:VD-1} and~\eqref{eq:VR-1} can be written as
	\begin{equation} \label{eq:VD-innerproduct}
		\begin{aligned}
			\VD(f) = \|f\|^2 &+ \sum_{s=1}^{\infty} \langle f, \PD_{0,2s-1} f \rangle + \sum_{s=1}^{\infty} \langle f, \PD_{0,2s} f \rangle \\
			&+ \sum_{s=1}^{\infty} \langle f, \PD_{1,2s} f \rangle + \sum_{s=1}^{\infty} \langle f, \PD_{1,2s+1} f \rangle\\
			= \|f\|^2 &+ \sum_{s=1}^{\infty} \langle f, (P_1P_2)^{s-1} P_1 f \rangle + \sum_{s=1}^{\infty} \langle f, (P_1P_2)^s f \rangle \\
			&+ \sum_{s=1}^{\infty} \langle f, (P_2P_1)^{s-1}P_2 f \rangle + \sum_{s=1}^{\infty} \langle f, (P_2P_1)^s f \rangle,
		\end{aligned}
	\end{equation}
	and
	\begin{equation} \label{eq:VR-innerproduct}
		\begin{aligned}
			\VR(f,r) = \|f\|^2 + 2 \sum_{t=1}^{\infty} \langle f, \PR_t f \rangle = \|f\|^2 + 2 \sum_{t=1}^{\infty} \langle f, [(1-r) P_1 + rP_2]^t f \rangle.
		\end{aligned}
	\end{equation}
	These asymptotic variances will be carefully compared in Section~\ref{sec:main}.

	To continue, I will describe the algebraic foundation of this work.

	\subsection{Halmos's Theory of Two Projections} \label{ssec:twoproj}
	
	Just for this subsection, let $P_1$ and $P_2$ be two orthogonal projection operators on a generic Hilbert space $(H, \langle \cdot, \cdot \rangle)$.
	Let $H_1$ be the range of $P_1$, and $H_2$, that of $P_2$.
	Then~$H$ has the following orthogonal decomposition:
	\[
	H = M_{00} \oplus M_{01} \oplus M_{10} \oplus M_{11} \oplus \MRspace,
	\]
	where $\oplus$ denotes direct sum, $M_{00} = H_1 \cap H_2$, $M_{01} = H_1 \cap H_2^{\bot}$, $M_{10} = H_1^{\bot} \cap H_2$, $M_{11} = H_1^{\bot} \cap H_2^{\bot}$, and~$\MRspace$ is the rest.
	Here, $\bot$ denotes the orthogonal complement of a subspace.
	$P_1$ and $P_2$ leave each of $M_{00}$, $M_{01}$, $M_{10}$, $M_{11}$, and~$\MRspace$ invariant.
	Thus, $\MRspace$ can be further decomposed into $\MRspace = (P_1 \MRspace) \oplus (I-P_1)\MRspace$, where~$I$ is the identity operator on $\MRspace$.
	The behavior of $P_1$ and $P_2$ on $M_{ij}$ for $i,j \in \{0,1\}$ is simple.
	How the two projections act on $\MRspace = P_1\MRspace \oplus (I-P_1)\MRspace$ is, on the other hand, non-trivial.

	I will make use of an important result in linear algebra by \cite{halmos1969two}.
	Some additional concepts are needed to state the result.
	Let $G'$ and $H'$ be subspaces of some linear space, and let $G''$ and $H''$ be subspaces of another.
	Suppose that $G' \cap H' = 0$ and $G'' \cap H'' = 0$.
	A linear transformation~$A$ from $G' \oplus H'$ to $G'' \oplus H''$ has a $2 \times 2$ matrix representation.
	To see this, let~$P$ be the linear operator that maps any $g_0 + g_1 \in G'' \oplus H''$ (where $g_0 \in G''$ and $g_1 \in H''$) to $g_0 + 0$.
	Let $A_{00} = PA|_{G'}$ (where $A|_{G'}$ means~$A$ restricted to $G'$), $A_{01} = PA|_{H'}$, $A_{10} = (I-P)A|_{G'}$, $A_{11} = (I-P)A|_{H'}$. 
	Then, for $g_{0} + g_{1} \in G' \oplus H'$,
	\[
	A \sum_{i=0}^1 g_{i} = \sum_{i=0}^1 \sum_{j=0}^1 A_{ij} g_{j},
	\]
	and one may write
	\[
	A = \left( \begin{array}{cc}
		A_{00} & A_{01} \\
		A_{10} & A_{11}
	\end{array} \right).
	\]
	Linear combinations and products of linear transformations can be expressed as those of their matrix representations.
	Finally, a self-adjoint operator~$B$ on a generic Hilbert space is called a positive contraction if both~$B$ and identity minus~$B$ are positive semi-definite.
	
	Consider the matrix representations of $P_1$ and $P_2$ restricted to $\MRspace = P_1\MRspace \oplus (I-P_1)\MRspace$.
	They are characterized by the following famous result.

	\begin{lemma} \citep{halmos1969two} \label{lem:halmos}
		Assume that $\MRspace \neq \{0\}$.
		Then $P_1\MRspace$ and $(I-P_1)\MRspace$ are both nontrivial and have the same dimension.
		Moreover, there exist a unitary transformation $W: (I-P_1)\MRspace \to P_1 \MRspace$ and positive contractions~$C$ and~$S$ on $P_1 \MRspace$ that have the following properties:
		\begin{enumerate}
			\item $C^2 + S^2 = I_0$, where $I_0$ the identity operator on $P_1 \MRspace$;
			\item $\mbox{Ker}(C) = \mbox{Ker}(S) = \{0\}$, where $\mbox{Ker}$ denotes the kernel of an operator;
			\item
			\[
			\begin{aligned}
				P_1|_{\MRspace} &= 
				\Gamma^* \left( \begin{array}{cc}
					I_0 & 0 \\
					0 & 0
				\end{array} \right) \Gamma, \\
				P_2|_{\MRspace} &= 
				\Gamma^* \left( \begin{array}{cc}
					C^2 & CS \\
					CS & S^2
				\end{array} \right) \Gamma,
			\end{aligned}
			\]
			where
			\[
			\Gamma =  I_0 \oplus W := \left( \begin{array}{cc}
				I_0 & 0 \\
				0 & W
			\end{array} \right),
			\]
			and $\Gamma^* = I_0 \oplus W^*$ is its adjoint.
		\end{enumerate}
		
	\end{lemma}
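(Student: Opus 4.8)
The plan is to manufacture $C$, $S$, and $W$ from a single self-adjoint ``angle operator'' and then read off both block representations from the identities $P_i^2 = P_i = P_i^*$. Write $H_1' := P_1\MRspace$ and $H_0' := (I-P_1)\MRspace$ for the two summands of $\MRspace$. First I would set $A := P_1 P_2 P_1|_{H_1'}$; since $\langle A f, f\rangle = \|P_2 P_1 f\|^2 \in [0,\|f\|^2]$ for $f \in H_1'$, this $A$ is a positive contraction on $H_1'$, so by the continuous functional calculus I may define $C := A^{1/2}$ and $S := (I_0 - A)^{1/2}$. Property~1 ($C^2 + S^2 = I_0$) is then immediate, and $C$, $S$ are commuting positive contractions because both are functions of $A$. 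This reduces the entire lemma to understanding $A$ and the coupling between $H_1'$ and $H_0'$.

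Property~2 is where the defining feature of the generic part $\MRspace$ enters: the four ``aligned'' pieces have been split off, so $H_1 \cap H_2$, $H_1 \cap H_2^\bot$, $H_1^\bot \cap H_2$, and $H_1^\bot \cap H_2^\bot$ all intersect $\MRspace$ trivially. A vector $f \in \mathrm{Ker}(C) = \mathrm{Ker}(A)$ satisfies $\|P_2 f\| = 0$, putting it in $H_1 \cap H_2^\bot \cap \MRspace = \{0\}$, while $f \in \mathrm{Ker}(S) = \mathrm{Ker}(I_0 - A)$ satisfies $\|P_2 f\| = \|f\|$ and hence $P_2 f = f$, putting it in $H_1 \cap H_2 \cap \MRspace = \{0\}$. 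The $P_1$ half of Property~3 is essentially free: on $\MRspace = H_1' \oplus H_0'$ the operator $P_1$ is the orthogonal projection onto $H_1'$, with matrix $\left(\begin{smallmatrix} I_0 & 0 \\ 0 & 0\end{smallmatrix}\right)$, and one checks by direct multiplication that $\Gamma^* \left(\begin{smallmatrix} I_0 & 0 \\ 0 & 0\end{smallmatrix}\right)\Gamma$ reproduces this matrix for every $\Gamma = I_0 \oplus W$.

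The heart of the argument, and the step I expect to be the main obstacle, is the $P_2$ block together with the construction of $W$. Writing the matrix of $P_2|_{\MRspace}$ as $\left(\begin{smallmatrix} C^2 & b \\ b^* & d\end{smallmatrix}\right)$ with $b := P_1 P_2|_{H_0'}$ and $d := (I-P_1)P_2|_{H_0'}$, the $(0,0)$ block of $P_2^2 = P_2$ gives $b b^* = C^2 - C^4 = C^2 S^2 = (CS)^2$ and the $(1,1)$ block gives $b^* b = d - d^2 = d(I_0 - d)$. Re-running the reasoning of Property~2 with $P_1$ replaced by $I - P_1$ shows $\mathrm{Ker}(d) = \mathrm{Ker}(I_0 - d) = \{0\}$, so $d(I_0-d)$ and hence $b$ are injective, while $\mathrm{Ker}(b^*) = \mathrm{Ker}((CS)^2) = \{0\}$ by Property~2. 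Thus the polar decomposition $b = |b^*|\,W = CS\,W$ yields a partial isometry $W: H_0' \to H_1'$ whose initial and final spaces exhaust $H_0'$ and $H_1'$; as both relevant kernels vanish, $W$ is in fact unitary. Finally, feeding $b = CS\,W$ into the off-diagonal idempotency relation $C^2 b + b d = b$ and cancelling the injective factor $CS$ gives $W d = S^2 W$, i.e.\ $d = W^* S^2 W$; assembling the four blocks produces exactly $P_2|_{\MRspace} = \Gamma^*\left(\begin{smallmatrix} C^2 & CS \\ CS & S^2\end{smallmatrix}\right)\Gamma$.

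The remaining claims then follow quickly: $W$ being unitary forces $H_1'$ and $H_0'$ to have the same dimension, and each is nonzero whenever $\MRspace \neq \{0\}$, since $P_1\MRspace = \{0\}$ would force $\MRspace \subseteq H_1^\bot$ and thus $\MRspace \subseteq M_{10} \oplus M_{11}$, contradicting that these pieces were removed (symmetrically for $H_0'$). The only genuine care needed is infinite-dimensional bookkeeping: $C$, $S$, and $CS$ are injective but generally lack bounded inverses, so every ``cancellation'' --- in particular the passage from $CS(W d) = CS(S^2 W)$ to $W d = S^2 W$ --- must be justified vectorwise through injectivity rather than by inverting. I expect this to be the only place where the argument requires real attention, the rest being the functional calculus and the four block identities.
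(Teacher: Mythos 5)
Your proof is correct, but note that the paper itself offers no proof of this lemma: it is quoted as a classical result of Halmos, with the accompanying remark pointing the reader to B\"ottcher and Spitkovsky for a proof. Your argument is essentially that standard proof. You build the angle operator $A = P_1P_2P_1|_{P_1\MRspace}$, take $C = A^{1/2}$, $S = (I_0-A)^{1/2}$, extract $W$ from the polar decomposition of the off-diagonal block $b = P_1P_2|_{(I-P_1)\MRspace}$, and derive the remaining block identities from $P_2^2 = P_2 = P_2^*$; the kernel conditions come from the fact that $\MRspace$ is orthogonal to all four subspaces $M_{ij}$, and you correctly flag that every cancellation of $C$, $S$, or $CS$ must go through injectivity rather than invertibility. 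The one place that is slightly too terse is the nontriviality claim: from $P_1\MRspace = \{0\}$ you conclude $\MRspace \subseteq H_1^{\bot}$ and ``thus'' $\MRspace \subseteq M_{10}\oplus M_{11}$, but $H_1^{\bot}$ is not $M_{10}\oplus M_{11}$; the missing ingredient is that $\MRspace$ is invariant under $P_2$, so for $f \in \MRspace \subseteq H_1^{\bot}$ one has $P_2 f \in \MRspace \cap H_1^{\bot}\cap H_2 \subseteq \MRspace\cap M_{10} = \{0\}$ and then $f \in \MRspace\cap M_{11} = \{0\}$. With that small repair the argument is complete and self-contained, which is more than the paper provides.
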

	
	\begin{remark}
		$S$ is but a compact way of writing $\sqrt{I_0-C^2}$.
		In particular, $CS = SC$.
	\end{remark}
	
	\begin{remark}
		For more background on the theory of two projections, see \cite{bottcher2010gentle}.
		For a proof of Halmos's result, see \cite{bottcher2018robert}.
	\end{remark}
	
	Using Lemma~\ref{lem:halmos}, one can obtain useful representations of mixtures of two projections.
	For $r \in (0,1)$, 
	\[
	(1-r) P_1|_{\MRspace} + rP_2|_{\MRspace} = \Gamma^* \left( \begin{array}{cc}
		(1-r)I_0 + rC^2 & rCS \\
		rCS & rS^2
	\end{array} \right) \Gamma.
	\]
	Let
	\[
	\Delta(C) = (1-2r)^2 I_0 + 4r(1-r) C^2.
	\]
	The following is a result given by \cite{nishio1985structure}.
	
	\begin{lemma} \citep{nishio1985structure} \label{lem:structure}
		Assume that $\MRspace \neq \{0\}$.
		For $r \in (0,1)$, there exists a unitary operator $U: \MRspace \to \MRspace$ such that
		\[
		(1-r) P_1|_{\MRspace} + rP_2|_{\MRspace} = \Gamma^* U \left( \begin{array}{cc}
			[I_0+\sqrt{\Delta(C)}]/2 & 0 \\
			0 & [I_0-\sqrt{\Delta(C)}]/2
		\end{array} \right) U^* \Gamma.
		\]
	\end{lemma}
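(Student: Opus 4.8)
The plan is to diagonalise the $2\times 2$ block-operator matrix that represents the mixture, exploiting the fact that all of its blocks are commuting functions of $C$. Recall from the display immediately preceding the lemma that, with respect to the decomposition $\MRspace = P_1\MRspace \oplus (I-P_1)\MRspace$,
\[
(1-r)P_1|_{\MRspace}+rP_2|_{\MRspace}=\Gamma^{*}M\Gamma,\qquad M=\begin{pmatrix}(1-r)I_0+rC^2 & rCS\\ rCS & rS^2\end{pmatrix},
\]
an operator on $P_1\MRspace\oplus P_1\MRspace$. Since $S=\sqrt{I_0-C^2}$ and $CS=SC$, each of the four blocks of $M$ is a bounded self-adjoint function of $C$, so the blocks mutually commute. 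Thus $M$ behaves exactly like a symmetric $2\times 2$ matrix with entries in the commutative algebra generated by $C$, and I can diagonalise it through the Borel functional calculus, reducing every operator identity below to a scalar identity of functions on $\mathrm{spec}(C)\subseteq[0,1]$.

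First I would record the ``trace'' and ``discriminant'' of this matrix. The diagonal blocks sum to $I_0$, and a short computation gives $[(1-r)I_0+rC^2-rS^2]^2+4r^2C^2S^2=\Delta(C)$, matching the definition $\Delta(C)=(1-2r)^2 I_0+4r(1-r)C^2$. Hence the two candidate eigenvalues are $\lambda_{\pm}=[I_0\pm\sqrt{\Delta(C)}]/2$, precisely the diagonal entries in the statement; note $\sqrt{\Delta(C)}$ is a well-defined positive operator because $\Delta(C)$ is positive semidefinite.

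Next I would exhibit the diagonalising rotation explicitly. Let $\varphi(C)$ be the function of $C$ obtained from the scalar function $\varphi(c)=[(1-2r)+2rc^2]/\sqrt{(1-2r)^2+4r(1-r)c^2}$, and set
\[
A^2=\tfrac12\big(I_0+\varphi(C)\big),\qquad B^2=\tfrac12\big(I_0-\varphi(C)\big),
\]
with $A,B$ their positive square roots and $V=\begin{pmatrix}A&-B\\ B&A\end{pmatrix}$. The identities to verify, all via functional calculus, are $A^2+B^2=I_0$ (so that $V$ is unitary), $\varphi(C)\sqrt{\Delta(C)}=(1-2r)I_0+2rC^2$, and $AB\sqrt{\Delta(C)}=rCS$; the last follows by squaring, since $A^2B^2=(I_0-\varphi(C)^2)/4=r^2C^2S^2\,\Delta(C)^{-1}$ gives $(AB\sqrt{\Delta(C)})^2=(rCS)^2$ with both sides positive. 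A direct block multiplication using $\lambda_++\lambda_-=I_0$ and $\lambda_+-\lambda_-=\sqrt{\Delta(C)}$ then yields $V\,\mathrm{diag}(\lambda_+,\lambda_-)\,V^{*}=M$, whence the mixture equals $\Gamma^{*}V\,\mathrm{diag}(\lambda_+,\lambda_-)\,V^{*}\Gamma$ and one may take $U=V$, understood as a unitary on $P_1\MRspace\oplus P_1\MRspace$ (equivalently, via the identification afforded by $\Gamma$, on $\MRspace$).

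The one point demanding care---and the only real obstacle---is that $\varphi(C)$ must be treated as a single bounded function of $C$ rather than as a product involving $\Delta(C)^{-1/2}$: when $r=1/2$ (and as $\mathrm{spec}(C)$ approaches the endpoints permitted by $\mathrm{Ker}(C)=\mathrm{Ker}(S)=\{0\}$) the operator $\sqrt{\Delta(C)}$ need not be boundedly invertible. I would resolve this by checking that the scalar $\varphi(c)$ extends continuously to all of $[0,1]$ with $|\varphi(c)|\le 1$ (the bound being immediate from $\Delta(c)\ge[(1-2r)+2rc^2]^2$), so that $\varphi(C)$, and hence $A^2,B^2\in[0,I_0]$, are legitimate positive contractions and the products $\varphi(C)\sqrt{\Delta(C)}$ and $AB\sqrt{\Delta(C)}$ collapse to the stated functions of $C$ even where $\Delta$ vanishes. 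Everything else is routine matrix bookkeeping.
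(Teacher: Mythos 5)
Your proof is correct and follows essentially the same route as the paper's: diagonalize the Halmos block representation of the mixture by an explicit unitary whose blocks are continuous functions of $C$, verifying the required identities pointwise on the spectrum and lifting them via the functional calculus. Your parametrization of the rotation through the single function $\varphi(C)$ is in fact slightly cleaner than the paper's four normalized entries $h_{ij}$, since it handles $r=1/2$ and the spectral endpoints uniformly rather than by separate cases.
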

	
	An elementary proof of Lemma~\ref{lem:structure} is provided in the Supplement, along with a matrix representation of~$U$ which is not given in \cite{nishio1985structure}.

	In the context of Gibbs samplers, $H = L_0^2(\pi)$, and $P_1$ and $P_2$ are projection operators given by~\eqref{eq:P1P2-operator}.
	$H_1$ consists of functions $f \in L_0^2(\pi)$ such that $f(x_1,x_2)$ depends only on $x_1$, while
	$H_2$ consists of functions $f \in L_0^2(\pi)$ such that $f(x_1,x_2)$ depends only on $x_2$.
	The DG algorithm can be viewed as an alternating projection algorithm \citep{diaconis2010stochastic}.
	
	The subspaces $M_{ij}, \, i,j \in \{0,1\},$ also have interpretations, although they are not always easy to identify in practice.
	The space $M_{00}$ consists of $f \in L_0^2(\pi)$ such that $f(x_1,x_2)$ can be written as a function of just $x_1 \in \X_1$ as well as one of just $x_2 \in \X_2$.
	The space $M_{01}$ consists of $f \in L_0^2(\pi)$ such that $f(x_1,x_2)$ depends only on $x_1$ and that $E[f(X_1, X_2) | X_2] = E[f(X_1, X_2)] = 0$, where $(X_1, X_2) \sim \pi$.
	The space $M_{10}$ consists of $f \in L_0^2(\pi)$ such that $f(x_1,x_2)$ depends only on $x_2$ and that $E[f(X_1, X_2) | X_1] = 0$.
	The space $M_{11}$ consists of $f \in L_0^2(\pi)$ such that $E[f(X_1, X_2) | X_1] = 0$ and $E[f(X_1, X_2) | X_2] = 0$.
	Note that, for $f \in L_0^2(\pi)$, to say that $E[f(X_1, X_2) | X_i] = 0$ is to say that $f(X_1, X_2)$ is uncorrelated with any square integrable function of $X_i$.
	
	Starting from the next section, I will assume that $M_{00} = \{0\}$.
	To see what this assumption entails, consider the contrapositive, that is, there exist a nonzero $L_0^2(\pi)$ function~$f$, some $g: \X_1 \to \mathbb{R}$, and some $h: \X_2 \to \mathbb{R}$ such that $f(x,y) = g(x) = h(y)$ for $\pi$-a.e. $(x,y)$.
	Then there exists measurable $B \subset \mathbb{R}$ such that $\pi[f^{-1}(B)] > 0$ and $\pi[f^{-1}(B^c)] > 0$.
	Moreover, $f^{-1}(B)$ and $g^{-1}(B) \times h^{-1}(B)$ differ only by a $\pi$-measure zero set. 
	The same goes for $f^{-1}(B^c)$ and $g^{-1}(B^c) \times h^{-1}(B^c)$.
	Then~$\pi$ has positive mass on $B' = g^{-1}(B) \times h^{-1}(B)$ and $B'' = g^{-1}(B^c) \times h^{-1}(B^c)$, and zero mass on everything else.
	This is a ``reducible" structure that would render two-component Gibbs algorithms ineffective, as any chain that starts in $B'$ cannot enter $B''$, and any chain that starts in $B''$ cannot enter $B'$.
	
	Another assumption that will be made is that $\MRspace \neq \{0\}$.
	When $M_{00} = \{0\}$, this assumption states that $H_1$ and $H_2$ are not orthogonal.
	In the context of Gibbs samplers, this assumption is equivalent to $X_1$ not being independent of $X_2$, where $(X_1,X_2)$ has joint distribution~$\pi$.
	Indeed, for $f,g \in L_0^2(\pi)$,
	\[
	\langle f, g \rangle = \mbox{cov}[f(X_1,X_2), g(X_1,X_2)].
	\]
	Therefore, there exist $f \in H_1$ and $g \in H_2$ such that $\langle f,g \rangle \neq 0$ if and only if $X_1$ and $X_2$ are not independent.
	
	Finally, consider $\|C\|$, where $\|\cdot\|$ denotes the $L^2$ operator norm.
	(There is a slight abuse of notation since $\|\cdot\|$ is also used to denote the $L^2$ norm on $H = L_0^2(\pi)$.)
	It is well-known that $\|C\|$ is the cosine of the Friedrichs angle between $H_1$ and $H_2$, given by
	\[
	\sup \left\{ |\langle f, g \rangle|: \; f \in H_1 \cap (H_1 \cap H_2)^{\bot}, \; g \in H_2 \cap (H_1 \cap H_2)^{\bot}, \; \|f\|=\|g\|=1 \right\}.
	\]
	See, e.g., \cite{bottcher2010gentle}, Example 3.10.
	For $(X_1,X_2) \sim \pi$, this is the maximal correlation between~$X_1$ and~$X_2$.
	By Theorem 3.2 in \cite{liu:wong:kong:1994} along with Lemma 3.2, Theorem 4.1, and Proposition 3.5 in \cite{qin2021convergence}, when~$\B$ is countably generated, the DG/RG Markov chain is geometrically ergodic only if $\|C\| < 1$.

	\begin{example} \label{ex:normal0}
		Suppose that $\X_1 = \mathbb{R}^p$ for some positive integer~$p$, $\X_2 = \mathbb{R}$, and $\pi$ is the distribution of a $(p+1)$-dimensional normal vector 
		\[
		\left( \begin{array}{c}
			X_1 \\ X_2
		\end{array} \right) \sim \mbox{N}_{p+1} \left( \left( \begin{array}{c}
			m_1 \\ m_2
		\end{array} \right), \left( \begin{array}{cc}
			A & b \\
			b^{\top} & 1
		\end{array} \right)^{-1} \right),
		\]
		where $m_1 \in \mathbb{R}$, $m_2 \in \mathbb{R}^p$, $A \in \mathbb{R}^{p \times p}$ is positive definite, and $b \in \mathbb{R}^p$.
		To ensure that the precision matrix is positive definite, assume that $b^{\top}A^{-1}b < 1$.
		Assume also that $b \neq 0$, so that $X_1$ and $X_2$ are dependent.
		MCMC is not needed to sample from~$\pi$, but pretend that we are to apply two-component Gibbs anyway. 
		The full conditional $\pi_1(\cdot|x_1)$ is a normal distribution with mean $m_2 - b^{\top}(x_1-m_1)$ and variance $1$, and $\pi_2(\cdot|x_2)$ is a $p$-variate normal distribution with mean $m_1-A^{-1}b(x_2-m_2)$ and covariance matrix $A^{-1}$.
		Evidently, $M_{00} = \{0\}$.
		The subspace $\MRspace$ may be difficult to conceptualize, but the following calculation gives two non-trivial elements of~$\MRspace$.
		Let $f \in L_0^2(\pi)$ be such that
		\begin{equation} \label{eq:normal-f}
			f(x_1,x_2) = x_2 - m_2 + b^{\top} (x_1-m_1).
		\end{equation}
		Claim: $f \in (I-P_1)\MRspace$.
		To see this, consider $P_1P_2 f$, given by
		\[
		P_1P_2f (x_1,x_2) = -(1-b^{\top}A^{-1}b) b^{\top}(x_1-m_1).
		\]
		Then $P_1P_2f \in H_1= M_{01} \oplus P_1\MRspace$.
		For any $g \in M_{01} = H_1 \cap H_2^{\bot}$, $\langle P_1P_2f, g \rangle = \langle P_2f, g \rangle = 0$.
		Thus $P_1P_2 f \in P_1\MRspace$.
		It's easy to verify that~$f$ is a linear combination of $P_1P_2f$ and $P_2P_1P_2f$, and $P_1 f = 0$.
		Since $\MRspace$ is invariant under $P_2$, it must hold that $f \in (I-P_1) \MRspace$.
		Finally, standard convergence analysis shows that the associated DG Markov chain is geometrically ergodic,  so $\|C\| < 1$.
		In fact, the squared maximal correlation $\|C\|^2$ between $X_1$ and $X_2$, is known to be $b^{\top} A^{-1} b$ \citep[see, e.g.,][Lemma~3]{amit1991rates}.
	\end{example}

	\section{Comparing Asymptotic Variances} \label{sec:main}
	
	We now use matrix representations of $P_1$ and $P_2$ to study the DG and RG algorithms.
	Throughout this section, assume that $M_{00} = \{0\}$, $\MRspace \neq \{0\}$, and that $\|C\| < 1$.
	
	\subsection{Characterizing $\VD(f)$ and $\VR(f,r)$}

	
	Fix $f \in L_0^2(\pi)$.
	Then one has the orthogonal decomposition
	\begin{equation} \label{eq:fdecomp}
		f = f_{00} + f_{01} + f_{10} + f_{11} + f_{0} + f_{1},
	\end{equation}
	where $f_{ij} \in M_{ij}$, $f_{0} \in P_1 \MRspace$, and $f_{1} \in (I-P_1)\MRspace$.
	Note that $f_{00} = 0$.
	Moreover, $P_1 f_{01} = f_{01}$, $P_1 f_{10} = P_1 f_{11} = 0$, $P_2 f_{10} = f_{10}$, and $P_2 f_{01} = P_2 f_{11} = 0$.

	Let us characterize the asymptotic variance of the DG sampler using Lemma~\ref{lem:halmos}.
	
	\begin{proposition} \label{pro:VD}
		\begin{equation} \label{eq:VD-2}
			\begin{aligned}
				\VD(f) &= 2\|f_{01}\|^2 + 2\|f_{10}\|^2 + \|f_{11}\|^2 + \langle f_{0} + f_{1}, \SD (f_{0} + f_{1}) \rangle,
			\end{aligned}
		\end{equation}
		where
		\begin{equation} \label{eq:SD}
			\SD = \Gamma^* \left( \begin{array}{cc}
				2I_0 + 4C^2S^{-2} & 2CS^{-1} \\
				2CS^{-1} & 2I_0
			\end{array} \right)  \Gamma.
		\end{equation}
	\end{proposition}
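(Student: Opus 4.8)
The plan is to exploit the fact that $P_1$ and $P_2$ leave each of the summands $M_{01}$, $M_{10}$, $M_{11}$, and $\MRspace$ invariant, so that every operator appearing in the series of~\eqref{eq:VD-innerproduct} preserves the orthogonal decomposition~\eqref{eq:fdecomp}. Each of those operators is a word in $P_1,P_2$, hence maps each invariant subspace into itself; since distinct subspaces are orthogonal, every quadratic form $\langle f, Af\rangle$ splits as the sum of the same form evaluated separately at $f_{01}$, $f_{10}$, $f_{11}$, and $g:=f_0+f_1$, with all cross terms vanishing. As $\|f\|^2$ splits the same way, I would compute the contribution of each invariant piece to $\VD(f)$ in isolation and add them.

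For the three ``corner'' pieces the computation is immediate from the relations recorded after~\eqref{eq:fdecomp}. On $M_{01}$ one has $P_1 f_{01}=f_{01}$ and $P_2 f_{01}=0$, so in the four series of~\eqref{eq:VD-innerproduct} every term vanishes except $\langle f_{01}, P_1 f_{01}\rangle = \|f_{01}\|^2$ (the $s=1$ term of the first series); together with the $\|f\|^2$ term this gives $2\|f_{01}\|^2$. Symmetrically $M_{10}$ contributes $2\|f_{10}\|^2$, the surviving term now being $\langle f_{10}, P_2 f_{10}\rangle$ from the third series, and on $M_{11}$, where $P_1 f_{11}=P_2 f_{11}=0$, only the $\|f\|^2$ term survives, yielding $\|f_{11}\|^2$. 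These account for the first three terms of~\eqref{eq:VD-2}.

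The substance lies in the contribution of $g=f_0+f_1\in\MRspace$, and here I would pass through the unitary $\Gamma$ of Lemma~\ref{lem:halmos}. Writing $P_i|_{\MRspace}=\Gamma^*\mathbf{P}_i\Gamma$ with $\mathbf{P}_1=\left(\begin{smallmatrix} I_0 & 0\\ 0 & 0\end{smallmatrix}\right)$ and $\mathbf{P}_2=\left(\begin{smallmatrix} C^2 & CS\\ CS & S^2\end{smallmatrix}\right)$, the identity $\Gamma\Gamma^*=I$ turns any word in $P_1,P_2$ restricted to $\MRspace$ into the corresponding word in the block matrices conjugated by $\Gamma$, and $\langle g, \Gamma^*\mathbf{A}\Gamma g\rangle=\langle \Gamma g, \mathbf{A}\Gamma g\rangle$. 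A short induction (using $CS=SC$) gives the closed forms
\[
(\mathbf{P}_1\mathbf{P}_2)^{s-1}\mathbf{P}_1 = \begin{pmatrix} C^{2s-2} & 0\\ 0 & 0\end{pmatrix}, \qquad (\mathbf{P}_2\mathbf{P}_1)^{s-1}\mathbf{P}_2 = \begin{pmatrix} C^{2s} & C^{2s-1}S\\ C^{2s-1}S & C^{2s-2}S^2\end{pmatrix},
\]
together with analogous expressions for $(\mathbf{P}_1\mathbf{P}_2)^s$ and $(\mathbf{P}_2\mathbf{P}_1)^s$. Because $\|C\|<1$ forces $S^2=I_0-C^2$ to be bounded below by $(1-\|C\|^2)I_0$, it is boundedly invertible and each of the four geometric series converges absolutely in operator norm; this is precisely what licenses exchanging the summation with the inner product. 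Summing via $\sum_{s\geq1}C^{2s-2}=S^{-2}$, adding the identity block coming from $\|g\|^2$, and simplifying with $I_0-(I_0-C^2)S^{-2}=0$, I expect to collect the four series into $\mathbf{T}=\left(\begin{smallmatrix} 2I_0+4C^2S^{-2} & 2CS^{-1}\\ 2CS^{-1} & 2I_0\end{smallmatrix}\right)$, so that the $\MRspace$-contribution equals $\langle g, \Gamma^*\mathbf{T}\Gamma g\rangle = \langle f_0+f_1, \SD(f_0+f_1)\rangle$ with $\SD$ as in~\eqref{eq:SD}.

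The hard part will be purely the bookkeeping in this final step: one must keep the four series straight, track which block each term feeds into, and carry out the operator-algebra simplifications correctly, all while justifying convergence — which is exactly where the standing hypothesis $\|C\|<1$ is used. By contrast, the invariance-based splitting and the evaluation of the corner pieces are structural and essentially automatic once the decomposition is in place.
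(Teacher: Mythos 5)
Your proposal is correct and follows essentially the same route as the paper: decompose $f$ via~\eqref{eq:fdecomp}, dispose of the $M_{01}$, $M_{10}$, $M_{11}$ pieces using the elementary action of $P_1,P_2$ there, and then sum the four geometric series of Halmos block matrices on $\MRspace$ using $\sum_{s\geq 0}C^{2s}=S^{-2}$ under $\|C\|<1$. The closed forms you give for the words in $\mathbf{P}_1,\mathbf{P}_2$ check out, and your extra remarks on norm convergence of the series only make explicit what the paper leaves implicit.
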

	\begin{proof}
	Recall from~\eqref{eq:VD-innerproduct} that
	\[
	\begin{aligned}
		\VD(f) = \|f\|^2 &+ \sum_{s=1}^{\infty} \langle f, (P_1P_2)^{s-1} P_1 f \rangle + \sum_{s=1}^{\infty} \langle f, (P_1P_2)^s f \rangle \\
		&+ \sum_{s=1}^{\infty} \langle f, (P_2P_1)^{s-1}P_2 f \rangle + \sum_{s=1}^{\infty} \langle f, (P_2P_1)^s f \rangle.
	\end{aligned}
	\]
	By the decomposition of~$f$, this can be written as
	\begin{equation} \label{eq:VD-3}
		\begin{aligned}
			\VD(f) &= 2\|f_{01}\|^2 + 2\|f_{10}\|^2 + \|f_{11}\|^2 + \langle f_{0} + f_{1}, \Sigma (f_{0} + f_{1}) \rangle,
		\end{aligned}
	\end{equation}
	where
	\[
	\begin{aligned}
		\Sigma = I + & \sum_{s=1}^{\infty} (P_1|_{\MRspace} P_2|_{\MRspace})^{s-1} P_1|_{\MRspace} + \sum_{s=1}^{\infty} (P_1|_{\MRspace} P_2|_{\MRspace})^s + \\
		& \sum_{s=1}^{\infty}  (P_2|_{\MRspace} P_1|_{\MRspace})^{s-1} P_2|_{\MRspace} + \sum_{s=1}^{\infty} (P_2|_{\MRspace} P_1|_{\MRspace})^s.
	\end{aligned}
	\]
	For $s \geq 1$, we have the matrix representations
	\[
	(P_1|_{\MRspace} P_2|_{\MRspace})^s = \Gamma^* \left( \begin{array}{cc}
		C^{2s} & C^{2s-1}S \\
		0 & 0
	\end{array} \right) \Gamma, \qquad (P_2|_{\MRspace} P_1|_{\MRspace})^s = \Gamma^* \left( \begin{array}{cc}
		C^{2s} & 0 \\
		C^{2s-1}S & 0
	\end{array} \right) \Gamma.
	\]
	It follows that
	\[
	\Sigma = \Gamma^* \left( \begin{array}{cc}
		2I_0 + 4\sum_{s=1}^{\infty} C^{2s} & 2CS \sum_{s=0}^{\infty} C^{2s} \\
		2CS \sum_{s=0}^{\infty} C^{2s} & I_0 + S^2 \sum_{s=0}^{\infty} C^{2s}
	\end{array} \right)  \Gamma.
	\]
	Since $\|C\| < 1$,
	\[
	\sum_{s=0}^{\infty} C^{2s} = (I_0-C^2)^{-1} = S^{-2},
	\]
	and $\Sigma = \SD$.
	\eqref{eq:VD-3} then gives the desired result.
	\end{proof}

	Next, consider the RG sampler with selection probability $r \in (0,1)$.
	\begin{proposition} \label{pro:VR}
		\begin{equation} \label{eq:VR-2}
				\VR(f,r) = \frac{2-r}{r} \|f_{01}\|^2 + \frac{1+r}{1-r} \|f_{10}\|^2 + \|f_{11}\|^2 + \langle f_{0} + f_{1}, \SR(r) (f_{0} + f_{1}) \rangle,
		\end{equation}
		where
		\begin{equation} \label{eq:SR}
				\SR(r) 
			= \Gamma^* \left( \begin{array}{cc}
				\frac{2-r}{r} I_0 + \frac{2}{r(1-r)} C^2S^{-2} & \frac{2}{1-r} CS^{-1} \\
				\frac{2}{1-r} CS^{-1} & \frac{1+r}{1-r} I_0
			\end{array} \right) \Gamma.
		\end{equation}
	\end{proposition}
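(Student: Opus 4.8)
The plan is to mirror the proof of Proposition~\ref{pro:VD}, replacing the alternating products $P_1P_2$ and $P_2P_1$ by powers of the single mixed operator $R := (1-r)P_1 + rP_2$, so that the $t$-step operator appearing in~\eqref{eq:VR-innerproduct} is $\PR_t = R^t$. I would begin from
\[
\VR(f,r) = \|f\|^2 + 2\sum_{t=1}^{\infty} \langle f, R^t f \rangle,
\]
and exploit the fact that each of $M_{01}$, $M_{10}$, $M_{11}$, and $\MRspace$ is invariant under both $P_1$ and $P_2$, hence under $R$. Since these subspaces are mutually orthogonal and $f_{00}=0$, every inner product $\langle f, R^t f\rangle$ splits with no cross terms, so $\VR(f,r)$ decomposes into four independent contributions coming from $f_{01}$, $f_{10}$, $f_{11}$, and $f_0+f_1$.

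On the three ``simple'' pieces, $R$ acts as a scalar: $R|_{M_{01}} = (1-r)I$ (since $P_1$ is the identity and $P_2$ vanishes there), $R|_{M_{10}} = rI$, and $R|_{M_{11}} = 0$. Each contribution then reduces to a geometric series. For $f_{01}$ I obtain $\|f_{01}\|^2\,[1 + 2\sum_{t\ge1}(1-r)^t] = \frac{2-r}{r}\|f_{01}\|^2$; for $f_{10}$, $\|f_{10}\|^2\,[1 + 2\sum_{t\ge1}r^t] = \frac{1+r}{1-r}\|f_{10}\|^2$; and for $f_{11}$, simply $\|f_{11}\|^2$. These already yield the first three terms of~\eqref{eq:VR-2}.

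The substance lies in the $\MRspace$ piece, where I must identify $\SR(r) = I + 2\sum_{t=1}^{\infty}(R|_{\MRspace})^t$. Rather than compute powers of a non-idempotent operator directly, I would sum the Neumann series: because $\|C\|<1$, Lemma~\ref{lem:structure} shows the spectrum of $R|_{\MRspace}$ lies in $[0,1)$ bounded away from $1$, so $\sum_{t\ge0}(R|_{\MRspace})^t = (I - R|_{\MRspace})^{-1}$ converges in operator norm and
\[
\SR(r) = 2\,(I - R|_{\MRspace})^{-1} - I.
\]
Using the block representation of $R|_{\MRspace}$ displayed just before Lemma~\ref{lem:structure}, together with $S^2 = I_0 - C^2$, I would write
\[
I - R|_{\MRspace} = \Gamma^* \left( \begin{array}{cc} rS^2 & -rCS \\ -rCS & (1-r)I_0 + rC^2 \end{array} \right) \Gamma.
\]

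The main obstacle, and the one genuinely computational step, is inverting this operator block matrix. The key simplification is that $C$ and $S$ commute (the Remark following Lemma~\ref{lem:halmos}), so all four blocks commute and the matrix inverts by the scalar $2\times2$ formula, with ``determinant'' $rS^2[(1-r)I_0 + rC^2] - r^2 C^2 S^2 = r(1-r)S^2$, which is invertible since $\|C\|<1$ forces $S^2$ to be invertible. After inverting, multiplying by $2$, subtracting $I$, and finally using the identity $S^{-2} = I_0 + C^2 S^{-2}$ to absorb the $-I$ into the diagonal, I expect the off-diagonal blocks to collapse to $\frac{2}{1-r}CS^{-1}$ and the diagonal blocks to $\frac{2-r}{r}I_0 + \frac{2}{r(1-r)}C^2S^{-2}$ and $\frac{1+r}{1-r}I_0$, thereby reproducing~\eqref{eq:SR}. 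The only care required is tracking the scalar coefficients correctly through the substitution; no deeper difficulty should arise.
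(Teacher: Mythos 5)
Your proposal is correct and follows essentially the same route as the paper's proof: split $\VR(f,r)$ over the invariant subspaces, sum the geometric series on $M_{01}$, $M_{10}$, $M_{11}$, and compute $\SR(r)=2(I-R|_{\MRspace})^{-1}-I$ by inverting the commuting block matrix as if it were a scalar $2\times 2$ matrix (your determinant $r(1-r)S^2$ and the identity $S^{-2}=I_0+C^2S^{-2}$ both check out and reproduce the paper's entries). No gaps.
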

	\begin{proof}
	Note that
	\[
	[(1-r)P_1 + rP_2] f = (1-r) f_{01} + r f_{10} + [(1-r) P_1|_{\MRspace} + r P_2|_{\MRspace}] (f_{0} + f_{1}),
	\]
	where
	\[
	(1-r) P_1|_{\MRspace} + r P_2|_{\MRspace} = \Gamma^* \left( \begin{array}{cc}
		(1-r) I_0 + rC^2 & rCS \\
		rCS & rS^2
	\end{array} \right) \Gamma.
	\]
	By~\eqref{eq:VR-innerproduct}, the asymptotic variance is then
	\begin{equation} \label{eq:VR-3}
		\begin{aligned}
			\VR(f,r) &= \|f\|^2 + 2 \sum_{t=1}^{\infty} \langle f, [(1-r)P_1 + r P_2]^t f \rangle \\
			&= \|f\|^2 + 2 \sum_{t=1}^{\infty} \left[ (1-r)^t \|f_{01}\|^2 + r^t \|f_{10}\|^2 + \langle f_{0} + f_{1}, [(1-r)P_1|_{\MRspace} + rP_2|_{\MRspace}]^t (f_{0} + f_{1}) \rangle  \right] \\
			&= \frac{2-r}{r} \|f_{01}\|^2 + \frac{1+r}{1-r} \|f_{10}\|^2 + \|f_{11}\|^2 + \langle f_{0} + f_{1}, \Sigma (f_{0} + f_{1}) \rangle,
		\end{aligned}
	\end{equation}
	where
	\[
	\Sigma = I + 2\sum_{t=1}^{\infty} [(1-r) P_1|_{\MRspace} + rP_2|_{\MRspace}]^t.
	\]
	By Lemma~\ref{lem:structure},
	\[
	\|(1-r) P_1|_{\MRspace} + rP_2|_{\MRspace}\| = \frac{1}{2} \left\| I_0 + \sqrt{\Delta(C)} \right\|.
	\]
	It follows that the Neumann series in the expression of $\Sigma$ is convergent whenever $\|I_0 + \Delta(C)\| < 2$, which is equivalent to $\|C\| < 1$.
	$\Sigma$ can then be written as
	\[
	\begin{aligned}
		\Sigma &= 2 \left[  I - (1-r) P_1|_{\MRspace} - rP_2|_{\MRspace} \right]^{-1} - I \\
		&= 2 \Gamma^*  \left( \begin{array}{cc}
			rS^2 & -rCS \\
			-rCS & I_0 - rS^2
		\end{array} \right)^{-1}  \Gamma - I.
	\end{aligned}
	\]
	Evaluating the inverse may seem daunting, but it is completely analogous to calculating the inverse of a $2 \times 2$ matrix.
	It is straightforward to verify that for $x \in [0,1]$,
	\[
	\begin{aligned}
		&\left( \begin{array}{cc}
			r(1-x^2) & -rx\sqrt{1-x^2} \\
			-rx\sqrt{1-x^2} & 1 - r(1-x^2)
		\end{array} \right)^{-1} \\
		&= 
		\frac{1}{r(1-r)} \left( \begin{array}{cc}
			-r + (1-x^2)^{-1} & rx /\sqrt{1-x^2} \\
			rx /\sqrt{1-x^2} & r
		\end{array} \right).
	\end{aligned}
	\]
	Analogously,
	\[
	\left( \begin{array}{cc}
		rS^2 & -rCS \\
		-rCS & I_0	 - rS^2
	\end{array} \right)^{-1} = 
	\frac{1}{r(1-r)} \left( \begin{array}{cc}
		-rI_0 + S^{-2} & rCS^{-1} \\
		rCS^{-1} & rI_0
	\end{array} \right).
	\]
	Routine calculations show that $\Sigma = \SR(r)$, and the desired result follows from~\eqref{eq:VR-3}.
	\end{proof}

	\begin{example} \label{ex:normal1}
		This is a continuation of Example~\ref{ex:normal0}.
		Recall that~$f$, as given in~\eqref{eq:normal-f}, is in $(I-P_1)\MRspace$.
		By Propositions~\ref{pro:VD} and~\ref{pro:VR}, $\VD(f) = 2\|f\|^2$, while $\VR(f,r) = (1+r)\|f\|^2/(1-r)$.

	\end{example}

	\subsection{The comparison} \label{ssec:comparison}

	Propositions~\ref{pro:VD} and~\ref{pro:VR} allow one to compare $\VD^{\dagger}(f)$ and $\VR^{\dagger}(f,r)$ (as defined at the end of Section~\ref{ssec:basic}) for $f \in L_0^2(\pi)$, the asymptotic variances of the two samplers after adjusting for computation time.

	Consider first the unadjusted asymptotic variances.
	The key result of this section is as follows.
	
	\begin{proposition} \label{pro:VDVR}
		For $r \in (0,1)$ and $f \in L_0^2(\pi)$,
		\[
		\begin{aligned}
			\VD(f) &\leq k_1(r) \VR(f,r), \\
			\VR(f,r) &\leq k_2(r) \VD(f,r),
		\end{aligned}
		\]
		where
		\[
		\begin{aligned}
			k_1(r) &= 1-r+r^2 + \sqrt{r^2(1-r)^2 + (1-2r)^2}, \\
			k_2(r) &= \frac{1-r+r^2}{2r(1-r)} + \frac{\sqrt{r^2(1-r)^2 + (1-2r)^2}}{2r(1-r)}.
		\end{aligned}
		\]
	\end{proposition}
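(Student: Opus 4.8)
The plan is to combine the explicit formulas of Propositions~\ref{pro:VD} and~\ref{pro:VR} with the orthogonal decomposition $f=f_{01}+f_{10}+f_{11}+f_0+f_1$ of~\eqref{eq:fdecomp} (recall $f_{00}=0$). Since $M_{01}$, $M_{10}$, $M_{11}$, $P_1\MRspace$, and $(I-P_1)\MRspace$ are mutually orthogonal, each of $\VD(f)$ and $\VR(f,r)$ is a sum of separate contributions from $M_{01}$, $M_{10}$, $M_{11}$ plus one quadratic form in $f_0+f_1$. It therefore suffices to prove the bound on each piece and add the resulting inequalities; summing $a_i\le k_1(r)b_i$ over the pieces gives $\VD(f)\le k_1(r)\VR(f,r)$, and the reverse bound with $k_2(r)$ is obtained symmetrically.

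For the scalar pieces the contributions are read off directly: on $M_{01}$ they are $2\|f_{01}\|^2$ versus $\tfrac{2-r}{r}\|f_{01}\|^2$, on $M_{10}$ they are $2\|f_{10}\|^2$ versus $\tfrac{1+r}{1-r}\|f_{10}\|^2$, and on $M_{11}$ they coincide. Thus the per-piece ratios $\VD/\VR$ are $\tfrac{2r}{2-r}$, $\tfrac{2(1-r)}{1+r}$, and $1$, and one checks each is $\le k_1(r)$ using $k_1(r)\ge 1$; the reciprocal ratios $\tfrac{2-r}{2r}$, $\tfrac{1+r}{2(1-r)}$, $1$ are compared to $k_2(r)$ in the same way.

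The heart of the argument is the quadratic form on $\MRspace$, where I must show $\SD\preceq k_1(r)\SR(r)$ (and $\SR(r)\preceq k_2(r)\SD$). As $\Gamma$ is unitary, conjugation preserves operator order, so it is enough to compare the inner $2\times2$ blocks of~\eqref{eq:SD} and~\eqref{eq:SR}. Every entry of these blocks is a function of the single self-adjoint operator $C$ (with $S=\sqrt{I_0-C^2}$ commuting with $C$), so the spectral theorem lets me simultaneously diagonalize $C$ and reduce the operator inequality to a family of scalar $2\times2$ inequalities indexed by $c$ in the spectrum of $C$, which lies in $(0,1)$ because $\mathrm{Ker}(C)=\{0\}$ and $\|C\|<1$. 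Writing $s=\sqrt{1-c^2}$, these matrices are
\[
D(c)=\begin{pmatrix} 2+4c^2/s^2 & 2c/s \\ 2c/s & 2\end{pmatrix},\qquad
R(c,r)=\begin{pmatrix} \tfrac{2-r}{r}+\tfrac{2}{r(1-r)}\tfrac{c^2}{s^2} & \tfrac{2}{1-r}\tfrac{c}{s} \\ \tfrac{2}{1-r}\tfrac{c}{s} & \tfrac{1+r}{1-r}\end{pmatrix},
\]
and the goal is $k_1(r)R(c,r)-D(c)\succeq0$ for every such $c$.

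To verify that the symmetric $2\times2$ matrix $k_1R(c,r)-D(c)$ is positive semidefinite I would check that both diagonal entries and the determinant are non-negative. The diagonal conditions reduce to the scalar-piece inequalities already settled (together with the harmless $k_1\ge 2r(1-r)$), so they hold. The determinant is the crux: setting $w=c^2/s^2=c^2/(1-c^2)\in(0,\infty)$, every entry of $kR(c,r)-D(c)$ is affine in $w$ (indeed $c^2/s^2=w$ and $(c/s)^2=w$), so its determinant is affine, say $\alpha(k)+\beta(k)\,w$; non-negativity for all $w\ge0$ means $\alpha(k)\ge0$ and $\beta(k)\ge0$. A direct computation yields $\beta(k)=\tfrac{2}{r(1-r)}\big[k^2-2(1-r+r^2)k+2r(1-r)\big]$, whose larger root is exactly $k_1(r)$, because the discriminant $(1-r+r^2)^2-2r(1-r)$ equals $r^2(1-r)^2+(1-2r)^2$. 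Hence at $k=k_1(r)$ one has $\beta=0$ and $\det\big(k_1R-D\big)=\alpha=\big(k_1\tfrac{1+r}{1-r}-2\big)\big(k_1\tfrac{2-r}{r}-2\big)\ge0$, which closes the first inequality. The reverse bound follows from the mirror computation: $\det\big(kD(c)-R(c,r)\big)$ is again affine in $w$ with slope $\tfrac{2}{r(1-r)}\big[2r(1-r)k^2-2(1-r+r^2)k+1\big]$, whose larger root is precisely $k_2(r)$. The main obstacle is organizing this determinant computation so that the affine-in-$w$ structure is transparent and the two critical quadratics collapse to the stated closed forms for $k_1(r)$ and $k_2(r)$; everything else is bookkeeping across the orthogonal summands.
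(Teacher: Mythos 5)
Your proposal is correct and follows essentially the same route as the paper: the same orthogonal decomposition, the same reduction of the scalar pieces via $k_1(r)\ge\max\{1,\,2r/(2-r),\,2(1-r)/(1+r)\}$, and the same key operator inequality $\SD\preceq k_1(r)\SR(r)$ hinging on the quadratic $k^2-2(1-r+r^2)k+2r(1-r)$ whose larger root is $k_1(r)$ (the paper's Lemmas~\ref{lem:k1k2} and~\ref{lem:SDSR}). The only cosmetic difference is how positive semidefiniteness of the $2\times2$ block is certified: you diagonalize $C$ by the spectral theorem and check diagonal entries and determinants of scalar matrices, whereas the paper's Lemma~\ref{lem:positive} avoids spectral calculus by completing a square in the quadratic form; both rest on exactly the conditions $a,c,d\ge 0$ and $ac-b^2\ge 0$.
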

	
	\begin{proof}
		Decompose $f \in L_0^2(\pi)$ as in~\eqref{eq:fdecomp}.
		Recall~\eqref{eq:VD-2} and~\eqref{eq:VR-2}:
		\[
		\begin{aligned}
			\VD(f) &= 2\|f_{01}\|^2 + 2\|f_{10}\|^2 + \|f_{11}\|^2 + \langle f_{0} + f_{1}, \SD (f_{0} + f_{1}) \rangle,\\
			\VR(f,r) &= \frac{2-r}{r} \|f_{01}\|^2 + \frac{1+r}{1-r} \|f_{10}\|^2 + \|f_{11}\|^2 + \langle f_{0} + f_{1}, \SR(r) (f_{0} + f_{1}) \rangle.
		\end{aligned}
		\]
		By Lemma~\ref{lem:SDSR} in the Appendix, $\SD \leq k_1(r) \SR(r)$, and $\SR(r) \leq k_2(r) \SD$, where $\leq$ denotes Loewner ordering.
		Then
		\[
		\begin{aligned}
			\VD(f) &\leq \max \left\{ \frac{2r}{2-r}, \frac{2(1-r)}{1+r}, 1, k_1(r) \right\} \VR(f,r), \\
			\VR(f,r) &\leq \max \left\{ \frac{2-r}{2r}, \frac{1+r}{2(1-r)}, 1, k_2(r) \right\} \VD(f).
		\end{aligned}
		\]
		The result then follows from Lemma~\ref{lem:k1k2}.
	\end{proof}
	
	\begin{figure}
		\begin{center}
			\includegraphics[width=0.5\linewidth]{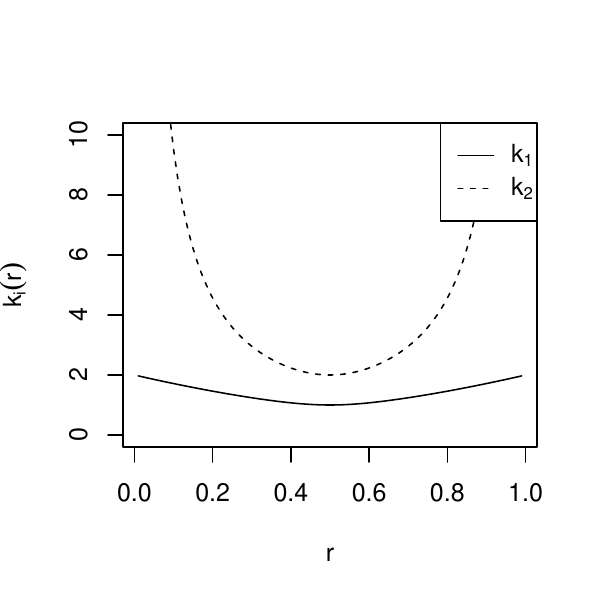}
			\caption{Graph of $r \mapsto k_i(r)$ for $i=1,2$.} \label{fig:kr}
		\end{center}
	\end{figure}
	
	The bounds in Proposition~\ref{pro:VDVR} are sharp in the sense that they cannot be improved without additional knowledge on~$\pi$ and~$f$.
	A more detailed explanation is given in Proposition~\ref{pro:bound-sharp}.
	It is easy to verify that $1 \leq k_1(r) \leq 2$, and $k_1(1/2)=1$;
	see Figure~\ref{fig:kr}.
	This means that when computation time is not taken into account, the DG algorithm is never much less efficient compared to the RG algorithm in terms of asymptotic variance.
	
	\begin{remark}
		Applying \eqref{eq:VD-2}-\eqref{eq:SR} for $r=1/2$ yields
		\[
		0 \leq \VR(f,1/2) - \|f\|^2 = 2[\VD(f) - \|f\|^2].
		\]
		This result can be found in \cite{greenwood1998information}.
	\end{remark}

	In practice, one needs to compare $\VD^{\dagger}(f) = (1+\tau)\VD(f)/2$ and $\VR^{\dagger}(f,r) = (r\tau + 1-r)\VR(f,r)$, where~$\tau$ is the time it takes to sample from $\pi_2(\cdot\mid x_2)$ if it takes unit time to sample from $\pi_1(\cdot\mid x_1)$.
	The following result follows immediately from Proposition~\ref{pro:VDVR}.
	
	\begin{corollary} \label{cor:VDstar}
		For $r \in (0,1)$ and $f \in L_0^2(\pi)$,
		\[
		\VD^{\dagger}(f) \leq \kappa(\tau,r) k_1(r) \VR^{\dagger}(f,r),
		\]
		where $k_1(r)$ is given in Proposition~\ref{pro:VDVR}, and
		\[
		\kappa(\tau,r) = \frac{\tau + 1}{2(r\tau +1  - r)}.
		\]
	\end{corollary}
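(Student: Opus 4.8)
The plan is to obtain the corollary directly from the unadjusted comparison in Proposition~\ref{pro:VDVR}, combined with the definitions of the adjusted asymptotic variances recorded at the end of Section~\ref{ssec:basic}. Recall that $\VD^{\dagger}(f) = \frac{1+\tau}{2}\VD(f)$ and $\VR^{\dagger}(f,r) = (r\tau+1-r)\VR(f,r)$, and that the first bound in Proposition~\ref{pro:VDVR} reads $\VD(f) \le k_1(r)\VR(f,r)$. Since everything reduces to tracking the two computation-cost multipliers, I expect the statement to fall out as an immediate consequence.

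First I would multiply both sides of $\VD(f) \le k_1(r)\VR(f,r)$ by the positive constant $(1+\tau)/2$, which converts the left-hand side into $\VD^{\dagger}(f)$ and yields $\VD^{\dagger}(f) \le \frac{1+\tau}{2}k_1(r)\VR(f,r)$. Next I would re-express $\VR(f,r)$ in terms of its adjusted counterpart through $\VR(f,r) = (r\tau+1-r)^{-1}\VR^{\dagger}(f,r)$, which is permissible because $r\tau+1-r>0$ for every $r\in(0,1)$ and $\tau>0$. Substituting and collecting the scalar factors then gives
\[
\VD^{\dagger}(f) \le \frac{1+\tau}{2(r\tau+1-r)}\,k_1(r)\,\VR^{\dagger}(f,r),
\]
and recognizing the leading fraction as $\kappa(\tau,r)$ finishes the argument.

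The hard part is essentially absent: no new analytic or operator-theoretic input is required beyond Proposition~\ref{pro:VDVR}, and the entire content is the algebraic bookkeeping of the two rescaling constants. The only point that warrants a brief remark is that both multipliers, $(1+\tau)/2$ and $r\tau+1-r$, are strictly positive under the standing assumptions $r\in(0,1)$ and $\tau>0$; this is precisely what allows one to scale the inequality by them without reversing its direction.
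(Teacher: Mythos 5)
Your argument is correct and is exactly the route the paper takes: the paper gives no separate proof, stating only that the corollary ``follows immediately from Proposition~\ref{pro:VDVR},'' and your rescaling of $\VD(f) \leq k_1(r)\VR(f,r)$ by the two positive computation-cost factors is precisely that immediate deduction.
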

	
	When $\tau \approx 1$, i.e., the times it takes to update $X_1$ and $X_2$ are roughly the same, $\kappa(\tau,r) \approx 1$, and $\VD^{\dagger}(f)$ is never much larger than $\VR^{\dagger}(f,r)$.
	On the other hand, if $\tau \ll 1$ or $\tau \gg 1$, i.e., the times it takes to update $X_1$ and $X_2$ are significantly different, then for some values of~$r$, e.g., $r=1/\sqrt{\tau}$ when $\tau \gg 1$ and $r = 1-\sqrt{\tau}$ when $\tau \ll 1$, $\kappa(\tau,r)$ can be very large.
	Intuitively, if $\tau \gg 1$, it makes sense to sample from $\pi_2(\cdot \mid x_2)$ less frequently, i.e., pick a smaller~$r$.
	The following toy example shows that when $\tau \gg 1$ and $r \ll 1$, $\VR^{\dagger}(f,r)$ can be considerably smaller than $\VD^{\dagger}(f)$ for some function~$f$.
	
	\begin{example} \label{ex:normal2}
		This is a continuation of Example~\ref{ex:normal1}.
		Recall that for the particular choice of~$f$, $\VD(f) = 2\|f\|^2$ and $\VR(f,r) = (1+r)\|f\|^2/(1-r)$.
		Suppose that~$p$ is large, and it takes a long time to update $X_1$ compared to $X_2$.
		Then $\tau \gg 1$.
		Avoid updating $X_1$ frequently by setting $r \ll 1$.
		Then
		\[
		\frac{\VR^{\dagger}(f,r)}{\VD^{\dagger}(f)} = \frac{r\tau + 1 - r}{\tau + 1} \frac{1+r}{1-r} \ll 1.
		\]
	\end{example}

	A general statement is given by the following result.
	\begin{corollary} \label{cor:VDstar-large}
		Let~$r$ be a function of~$\tau$ such that $r \to 0$ if $\tau \to \infty$.
		Then, for any nonzero $f \in M_{10} \oplus M_{11} \oplus (I-P_1) \MRspace$, as $\tau \to \infty$,
		\[
		\frac{\VR^{\dagger}(f,r)}{\VD^{\dagger}(f)} \to 0.
		\]
		By symmetry, a similar result holds as $\tau \to 0$ and $r \to 1$.
	\end{corollary}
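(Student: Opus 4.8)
The plan is to exhibit a single witness function, chosen once and for all from a fixed subspace that does not depend on $\tau$ or $r$, and then let Propositions~\ref{pro:VD} and~\ref{pro:VR} do the work. The standing assumption of this section guarantees $\MRspace \neq \{0\}$, so Lemma~\ref{lem:halmos} tells us that $(I-P_1)\MRspace$ is nontrivial. First I would fix any nonzero $f \in (I-P_1)\MRspace$. In the orthogonal decomposition~\eqref{eq:fdecomp}, this $f$ has $f_{01}=f_{10}=f_{11}=f_{0}=0$ and $f_{1}=f$, so the variance formulas collapse to $\VD(f)=\langle f, \SD f\rangle$ and $\VR(f,r)=\langle f, \SR(r) f\rangle$.

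The key computation is to evaluate these two inner products using the block representations~\eqref{eq:SD} and~\eqref{eq:SR}. Because $f$ lies in $(I-P_1)\MRspace$, the element $\Gamma f$ has a vanishing first block and equals $Wf$ in its second block; consequently only the bottom-right corner of each $2\times 2$ matrix contributes. Those corners are $2I_0$ for $\SD$ and $\frac{1+r}{1-r}I_0$ for $\SR(r)$, and since $W$ is unitary this yields $\VD(f)=2\|f\|^2$ and $\VR(f,r)=\frac{1+r}{1-r}\|f\|^2$, in agreement with Example~\ref{ex:normal1}. Notably, these values do not depend on $\tau$.

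It then remains to take the limit. Substituting into $\VD^{\dagger}(f)=\frac{1+\tau}{2}\VD(f)$ and $\VR^{\dagger}(f,r)=(r\tau+1-r)\VR(f,r)$ gives
\[
\frac{\VR^{\dagger}(f,r)}{\VD^{\dagger}(f)} = \frac{(r\tau+1-r)(1+r)}{(1+\tau)(1-r)}.
\]
I would bound the first factor by $\frac{r\tau+1-r}{1+\tau} \le r + \frac{1}{1+\tau}$, which tends to $0$ as $\tau\to\infty$ since $r\to 0$, while $\frac{1+r}{1-r}\to 1$ stays bounded; hence the ratio tends to $0$. This uses nothing beyond $r\to 0$, so no rate relating $r$ to $\tau$ is needed.

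Finally, for the symmetric assertion ($\tau\to 0$, $r\to 1$) I would relabel the two coordinates, which interchanges $P_1\leftrightarrow P_2$, sends the selection probability $r\mapsto 1-r$, and replaces the cost ratio $\tau$ by $1/\tau$, while leaving $\VD$ and $\|\cdot\|$ invariant; running the identical argument with a nonzero $f\in(I-P_2)\MRspace$ (nontrivial by the same lemma applied to $P_2$) yields the claim. The proof is short, so the main obstacle is just vigilance on three points I would check carefully: that the witness $f$ is genuinely fixed independent of $\tau$ and $r$, that the block-matrix evaluation isolates exactly the bottom-right corner, and that the relabeling symmetry invoked for the second half really preserves the deterministic-scan asymptotic variance and the norm of $f$.
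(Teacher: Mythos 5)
Your proposal is correct and follows essentially the same route as the paper's proof: fix a nonzero $f \in (I-P_1)\MRspace$ (nontrivial by Lemma~\ref{lem:halmos}), read off $\VD(f)=2\|f\|^2$ and $\VR(f,r)=\frac{1+r}{1-r}\|f\|^2$ from the block representations, and pass to the limit in the ratio of adjusted variances. The extra detail you supply on the limit bound and on the relabeling symmetry is sound but not a different argument.
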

	\begin{proof}
		Let $f = f_{10} + f_{11} + f_{1} \in M_{10} \oplus M_{11} \oplus (I-P_1) \MRspace$ be a nonzero function.
		By~\eqref{eq:VD-2}-\eqref{eq:SR},
		\[
		\begin{aligned}
			&\VD(f) = 2\|f_{10}\|^2 + \|f_{11}\|^2 + 2\|f_{1}\|^2 \geq \|f\|^2, \\
			&\VR(f,r) = \frac{1+r}{1-r} \|f_{10}\|^2 + \|f_{11}\|^2 + \frac{1+r}{1-r}\|f_{1}\|^2 \leq \frac{1+r}{1-r} \|f\|^2.
		\end{aligned}
		\]
		Then
		\[
		\frac{\VR^{\dagger}(f,r)}{\VD^{\dagger}(f)} \leq \frac{2( r\tau + 1 - r) }{\tau + 1} \frac{1+r}{1-r}.
		\]
		The result follows.
	\end{proof}
	
	\begin{remark}
		The space $M_{10} \oplus M_{11} \oplus (I-P_1) \MRspace$ consists of functions that are orthogonal to $M_{01} \oplus P_1 \MRspace$, which is the space of functions $f \in L_0^2(\pi)$ such that $f(x_1, x_2)$ depends only on $x_1$.
		In other words, $f \in M_{10} \oplus M_{11} \oplus (I-P_1) \MRspace$ if and only if $f(X_1, X_2)$ is uncorrelated with any $L^2$ function of $(X_1, X_2)$ that only depends on $X_1$, where $(X_1, X_2) \sim \pi$.
	\end{remark}
	
	The question remains whether an RG sampler with a well-chosen~$r$ performs well compared to the DG sampler for an arbitrary function~$f$.
	This is answered by the following.
	
	\begin{corollary} \label{cor:VRstar}
		For $r \in (0,1)$ and $f \in L_0^2(\pi)$,
		\begin{equation} \label{ine:VRstar}
			\VR^{\dagger}(f,r) \leq \frac{k_2(r)}{\kappa(\tau,r)} \VD^{\dagger}(f) = \frac{2(r\tau +1  - r)}{\tau + 1} \frac{1-r+r^2 + \sqrt{r^2(1-r)^2 + (1-2r)^2}}{2r(1-r)}    \VD^{\dagger}(f),
		\end{equation}
		where $k_2(r)$ is given in Proposition~\ref{pro:VDVR}, and $k(\tau,r)$ is given in Corollary~\ref{cor:VDstar}.
		In particular, if we let $r=0.5$ when $\tau=1$, and
		\begin{equation} \label{eq:r}
			r = \frac{-2\tau - 1 + \sqrt{\tau(2\tau+1)(\tau+2)}}{\tau^2 - 1} \in (0,1)
		\end{equation}
		when $\tau \neq 1$, then $k_2(r)/\kappa(\tau,r)=2$, and thus
		\begin{equation} \nonumber
			\VR^{\dagger}(f,r) \leq 2 \VD^{\dagger}(f).
		\end{equation}
	\end{corollary}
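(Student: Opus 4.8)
The plan is to treat the two assertions separately: the displayed bound \eqref{ine:VRstar} is a one-line consequence of Proposition~\ref{pro:VDVR}, while the particular choice of~$r$ reduces to solving a single scalar equation. For the bound, I start from $\VR^{\dagger}(f,r) = (r\tau+1-r)\VR(f,r)$ and $\VD^{\dagger}(f) = \tfrac{1+\tau}{2}\VD(f)$. Proposition~\ref{pro:VDVR} gives $\VR(f,r) \le k_2(r)\VD(f)$, so substituting $\VD(f) = \tfrac{2}{1+\tau}\VD^{\dagger}(f)$ yields
\[
\VR^{\dagger}(f,r) \le \frac{2(r\tau+1-r)}{1+\tau}\,k_2(r)\,\VD^{\dagger}(f).
\]
Since $\tfrac{2(r\tau+1-r)}{1+\tau} = 1/\kappa(\tau,r)$ by the definition of~$\kappa$ in Corollary~\ref{cor:VDstar}, this is exactly \eqref{ine:VRstar}, and no further work is needed for this part.

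It remains to show that the stated~$r$ forces $k_2(r)/\kappa(\tau,r)=2$. Using $k_2(r) = k_1(r)/[2r(1-r)]$, which is immediate from the formulas in Proposition~\ref{pro:VDVR}, and unfolding~$\kappa$, the requirement $k_2(r)/\kappa(\tau,r)=2$ is equivalent to the scalar identity
\[
k_1(r)\,(r\tau+1-r) \;=\; 2r(1-r)(1+\tau).
\]
I would attack this identity by isolating the radical: writing $k_1(r) = (1-r+r^2) + \sqrt{r^2(1-r)^2+(1-2r)^2}$ and moving the polynomial part across turns it into
\[
\sqrt{r^2(1-r)^2+(1-2r)^2}\;(r\tau+1-r) = 2r(1-r)(1+\tau)-(1-r+r^2)(r\tau+1-r),
\]
whose right-hand side one checks to be nonnegative at the relevant root, so that squaring is reversible. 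Squaring produces a polynomial equation in~$r$ with~$\tau$ as a parameter; after discarding the factor coming from the extraneous sign branch of the square root, the cost-aware solutions are the roots of the quadratic $(\tau^2-1)r^2 + 2(2\tau+1)r - (2\tau+1)=0$, whose root in $(0,1)$ is precisely the displayed expression. One checks directly that $r=\tfrac12$ also satisfies the identity for every~$\tau$, but it is the quadratic's root that is recorded in the statement, being the cost-aware choice; and in the degenerate case $\tau=1$ the leading coefficient vanishes, the quadratic collapses to the linear equation $2(2\tau+1)r-(2\tau+1)=0$ with solution $r=\tfrac12$, matching both the separate prescription and the limit of the general formula. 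Once the identity holds we have $k_2(r)/\kappa(\tau,r)=2$, and \eqref{ine:VRstar} delivers $\VR^{\dagger}(f,r)\le 2\VD^{\dagger}(f)$.

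The routine part --- the chain for the bound and the reduction to the scalar identity --- is immediate; the genuine obstacle is the post-squaring bookkeeping. Concretely, I must verify that the degree-six polynomial produced by squaring does split off the stated quadratic factor, confirm that the isolated radical was indeed nonnegative at the chosen root so that no extraneous solution is introduced, and check by a sign analysis --- distinguishing $\tau>1$ from $\tau<1$ through the sign of $\tau^2-1$ and the product of the quadratic's roots --- that the selected root genuinely lies in $(0,1)$. These are elementary but error-prone manipulations, and they are where essentially all the effort of the proof is concentrated.
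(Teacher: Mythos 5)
Your proof is correct and follows the same overall strategy as the paper: inequality \eqref{ine:VRstar} is read off from Proposition~\ref{pro:VDVR} exactly as in the paper, and the second claim is reduced to the scalar identity $k_1(r)(r\tau+1-r)=2r(1-r)(1+\tau)$, which is precisely the condition $k_2(r)/\kappa(\tau,r)=2$. Where you diverge is in how that identity is verified. The paper goes in the forward direction: it inverts \eqref{eq:r} to get $\tau = [1-2r+\sqrt{(1-2r)^2+r^2(1-r)^2}]/r^2$, substitutes into $1/\kappa(\tau,r)$, and checks $k_2(r)/\kappa(\tau,r)=2$ by a direct (radical-free-in-the-end) simplification; this avoids squaring entirely and so has no extraneous-root issue. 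You instead isolate the radical and square, which does work: writing $A=r\tau+1-r$, $E=1-r+r^2$, $B=2r(1-r)(1+\tau)$ and using $D^2-E^2=-2r(1-r)$, the squared equation collapses (after dividing by $2r(1-r)$) to a \emph{cubic} in $r$, not a sextic, and it factors as $-(2r-1)\bigl[(\tau^2-1)r^2+2(2\tau+1)r-(2\tau+1)\bigr]$, confirming your claimed quadratic and the spurious $r=1/2$ branch. So the bookkeeping you flag as the main burden is lighter than you anticipate, but your route genuinely does require the two checks you identify (sign of $B-EA$ at the chosen root, and $r\in(0,1)$), whereas the paper's substitution argument only needs the $r\in(0,1)$ check; in exchange, your derivation explains where formula \eqref{eq:r} comes from rather than merely verifying it.
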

	
	\begin{proof}
		\eqref{ine:VRstar} follows from Proposition~\ref{pro:VDVR}.
		
		When $\tau = 1$ and $r = 0.5$, the result clearly holds.
		Fix $\tau \neq 1$ and let~$r$ be as in~\eqref{eq:r}.
		It's straightforward to verify that $r \in (0,1)$.
		Moreover, solving for~$\tau$ yields
		\[
		\tau = \frac{1-2r + \sqrt{(1-2r)^2+r^2(1-r)^2}}{r^2}.
		\]
		Then
		\[
		\frac{1}{\kappa(\tau,r)} = \frac{2r[1-r-r^2 + \sqrt{(1-2r)^2+r^2(1-r)^2}]}{(1-r)^2 + \sqrt{(1-2r)^2+r^2(1-r)^2}}.
		\]
		Routine calculations show that $k_2(r)/\kappa(\tau,r) = 2$.
	\end{proof}
	
	\begin{figure}[h]
		\begin{center}
			\includegraphics[width=0.5\linewidth]{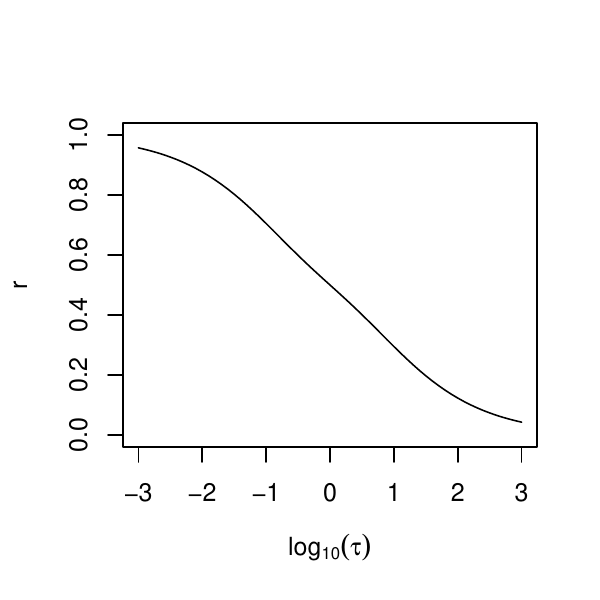}
			\caption{Relationship between $\tau$ and $r$ as given in~\eqref{eq:r}.
				$\tau$ is given in log scale.} \label{fig:rtau}
		\end{center}
	\end{figure}
	
	The relationship between~$r$ and~$\tau$ as given by~\eqref{eq:r} is plotted in Figure~\ref{fig:rtau}.
	It can be shown that this choice of~$r$ is not much worse than optimal in terms of minimizing $k_2(r)/\kappa(\tau,r)$.
	Moreover, as $\tau \to \infty$, $r \to 0$, and as $\tau \to 0$, $r \to 1$.
	In other words, this choice of~$r$ also satisfies the conditions in Corollary~\ref{cor:VDstar-large}.

	Corollaries~\ref{cor:VDstar-large} and~\ref{cor:VRstar} show that, in terms of adjusted asymptotic variance, when $\tau$ is extremely large or extremely small, the RG sampler with a good choice of~$r$ is never significantly worse than the DG sampler, and can be considerably better in some cases.
	One can say that the RG sampler is more robust in this regard.
	
	One may also use other choices of $r$.
	By Corollaries~\ref{cor:VDstar-large} and~\ref{cor:VRstar}, similar effects can be achieved whenever $r$ is a decreasing function of $\tau$ such that $1 \ll 1/r = O(\tau)$ when $\tau \to \infty$ and $1 \ll 1/(1-r) = O(1/\tau)$ when $\tau \to 0$.
	Here, $1/r = O(\tau)$ means $1/(r\tau)$ is bounded.
	
	\subsection{Data augmentation} \label{ssec:da}
	
	Studying the asymptotic variance $V(f)$ for all $f \in L_0^2(\pi)$ is relevant only if we care about integrating all $L^2$ functions, which is not always the case.
	For example, in a data augmentation setting \citep{tanner1987calculation,van2001art}, one usually only wishes to estimate $\pi f$ for $f \in L^2(\pi)$ such that $f(x_1,x_2)$ just depends on $x_1$.
	In this case, it is only interesting to compare $\VD^{\dagger}(f)$ and $\VR^{\dagger}(f,r)$ for $f \in H_1 = M_{01} \oplus P_1\MRspace$.
	%
	
	Let $f = f_{01} + f_{0} \in M_{01} \oplus P_1\MRspace$.
	By~\eqref{eq:VD-2}-\eqref{eq:SR},
	\begin{equation} \label{eq:V-da}
		\begin{aligned}
			\VD(f) &= 2\|f_{01}\|^2 + \langle f_{0}, (2I_0 + 4C^2S^{-2}) f_{0} \rangle, \\
			\VR(f,r) &= \frac{2-r}{r} \|f_{01}\|^2 + \left\langle f_{0}, \left( \frac{2-r}{r} I_0 + \frac{2}{r(1-r)} C^2S^{-2} \right) f_{0} \right\rangle.
		\end{aligned}
	\end{equation}
	Then
	\[
	\VD(f) \leq \max\left\{ \frac{2r}{2-r}, 2r(1-r) \right\} \VR(f,r).
	\]
	Let $\kappa(\tau,r)$ be defined as in Corollary~\ref{cor:VDstar}.
	Then
	\[
	\VD^{\dagger}(f) \leq \kappa(\tau,r) \max\left\{ \frac{2r}{2-r}, 2r(1-r) \right\} \VR^{\dagger}(f,r).
	\]
	Now,
	\[
	\kappa(\tau,r) \max\left\{ \frac{2r}{2-r}, 2r(1-r) \right\} \leq \frac{\tau+1}{2r\tau}  \max\left\{ \frac{2r}{2-r}, 2r(1-r) \right\} \leq \frac{\tau+1}{\tau}.
	\]
	Thus, when~$\tau$ is not too small, the DG algorithm can compete with the RG algorithm in terms of adjusted asymptotic variance.
	When $\tau \ll 1$, depending on the structure of~$\pi$, a result like Corollary~\ref{cor:VDstar-large} could still hold for some $f \in H_1$.

	
	\subsection{A Modified DG Sampler} \label{ssec:modified}
	
	The RG sampler achieves robustness by updating the less costly component more frequently.
	Naturally, one can use this idea to modify the DG sampler.
	Suppose that $\tau \geq 1$, i.e., it is more costly to sample from $\pi_2(\cdot \mid x_2)$ (which updates $X_1$) than $\pi_1(\cdot \mid x_1)$ (which updates $X_2$).
	Consider the following algorithm, which updates $X_2$ $\ell$ times consecutively before updating $X_1$ once, where $\ell \geq 1$.
	It simulates a Markov chain that again has~$\pi$ as its stationary distribution.
	
	\begin{algorithm}[H]
		\caption{Modified DG sampler with $\ell$ repeated draws from $\pi_1$} \label{alg:modified}
		Draw $\tilde{X}_0 = (X_{1,0},X_{2,0})$ from some initial distribution on $(\X, \B)$, and
		set $t=0$\;
		
		\While{$t < T$}{
			\If{$t = s(\ell+1) + q$ for some non-negative integer $s$ and $q \in \{0,\dots,\ell-1\}$ }{
				draw $X_{2,t+1}$ from $\pi_1(\cdot\mid X_{1,t})$, set $X_{1,t+1} = X_{1,t}$, and let $\tilde{X}_{t+1} = (X_{1,t+1},X_{2,t+1})$\;
			}
			\If{$t= s(\ell+1) + \ell$ for some non-negative integer~$s$}{
				draw $X_{1,t+1}$ from $\pi_2(\cdot\mid X_{2,t})$, set $X_{2,t+1} = X_{2,t}$, and let $\tilde{X}_{t+1} = (X_{1,t+1},X_{2,t+1})$\;
			}
			set $t=t+1$ \;
		}
	\end{algorithm}

	Let $f \in L_0^2(\pi)$. 
	Let $\VM(f,\ell)$ be the unadjusted asymptotic variance associated with this function and the modified DG algorithm.
	By Proposition~2 of \cite{greenwood1998information}, 
	\[
	\begin{aligned}
		\VM(f,\ell) = \|f\|^2 + & \frac{2}{\ell+1} \left[ \langle f, P_2 f \rangle + \sum_{s=1}^{\infty} \langle f, (P_2 P_1)^s P_2 f \rangle + \ell \sum_{s=1}^{\infty} \langle f, (P_2P_1)^s f \rangle \right. \\
		+ & \left. \frac{\ell(\ell+1)}{2} \langle f, P_1 f \rangle + \ell \sum_{s=1}^{\infty} \langle f, (P_1P_2)^s f \rangle + \ell^2 \sum_{s=1}^{\infty} \langle f, (P_1P_2)^s P_1 f \rangle \right].
	\end{aligned}
	\]
	
	Assume that $M_{00} = \{0\}$, $\MRspace \neq \{0\}$, and that $\|C\| < 1$. 
	Similarly to how Propositions~\ref{pro:VD} and~\ref{pro:VR} are proved, one can establish the following.
	\begin{proposition} \label{pro:VM}
		\[
		\VM(f,\ell) = (\ell+1) \|f_{01}\|^2 + \frac{\ell+3}{\ell+1} \|f_{10}\|^2 + \|f_{11}\|^2 + \langle f_{0} + f_{1}, \SM(\ell) (f_{0} + f_{1}) \rangle,
		\]
		where
		\begin{equation} \label{eq:SM}
		\SM(\ell) = \Gamma^* \left( \begin{array}{cc}
			(\ell+1) I_0 + 2(\ell+1) C^2 S^{-2} & 2 CS^{-1} \\
			2CS^{-1} & \frac{\ell+3}{\ell+1} I_0
		\end{array} \right) \Gamma.
		\end{equation}
	\end{proposition}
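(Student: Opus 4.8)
The plan is to mimic the proofs of Propositions~\ref{pro:VD} and~\ref{pro:VR}: decompose $f$ according to~\eqref{eq:fdecomp}, exploit the fact that $P_1$ and $P_2$ leave each of $M_{01}$, $M_{10}$, $M_{11}$, and $\MRspace$ invariant (so that these mutually orthogonal summands contribute additively to every inner product in the stated formula for $\VM(f,l)$), and reduce the computation on $\MRspace$ to block-matrix algebra via Lemma~\ref{lem:halmos}. Since $f_{00}=0$, this splits $\VM(f,l)$ into four independent pieces.

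First I would dispatch the three ``easy'' subspaces. On $M_{01}$ we have $P_1 f_{01}=f_{01}$ and $P_2 f_{01}=0$, so any product containing at least one factor $P_2$ annihilates $f_{01}$; only the pure-$P_1$ term $\tfrac{l(l+1)}{2}\langle f, P_1 f\rangle$ survives, and together with the leading $\|f\|^2$ this yields $(l+1)\|f_{01}\|^2$. Symmetrically, on $M_{10}$ one has $P_2 f_{10}=f_{10}$ and $P_1 f_{10}=0$, so only $\langle f, P_2 f\rangle$ survives, producing $\tfrac{l+3}{l+1}\|f_{10}\|^2$; and on $M_{11}$ every operator annihilates $f_{11}$, leaving $\|f_{11}\|^2$. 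These match the stated scalar coefficients exactly.

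The substance is the $\MRspace$ block. Here I would reuse the representations of $P_1|_{\MRspace}$, $P_2|_{\MRspace}$, $(P_1P_2)^s$, and $(P_2P_1)^s$ recorded in the proof of Proposition~\ref{pro:VD}, together with $(P_2P_1)^s P_2$ and $(P_1P_2)^s P_1$, which follow by one further multiplication. Substituting these into the six-term bracket and summing the resulting geometric series via $\sum_{s=0}^{\infty}C^{2s}=S^{-2}$ gives a $2\times 2$ operator matrix, conjugated by $\Gamma$, whose entries I would simplify using $C^2+S^2=I_0$ and $CS=SC$. The two collapses that make everything fit are $C^2 + C^4 S^{-2}=C^2 S^{-2}$ in the diagonal entry and $CS + C^3 S^{-1}=CS^{-1}$ in the off-diagonal entry; after these, the $(1,1)$ entry of the bracket equals $(l+1)^2 C^2 S^{-2} + \tfrac{l(l+1)}{2}I_0$, the off-diagonal entries equal $(l+1)CS^{-1}$, and the $(2,2)$ entry equals $I_0$. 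Multiplying the bracket by $\tfrac{2}{l+1}$ and adding $I$ then produces exactly $\SM(l)$ as in~\eqref{eq:SM}.

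The main obstacle is purely bookkeeping: the bracket carries six terms weighted by $1$, $l$, and $l^2$, and one must track how the three $C^2 S^{-2}$ contributions in the $(1,1)$ entry combine into the single coefficient $(l+1)^2=1+2l+l^2$ before the factor $\tfrac{2}{l+1}$ cancels it down to $2(l+1)$. Keeping the powers of $l$ aligned across the six terms, and confirming that the off-diagonal $(l+1)$ likewise cancels against $\tfrac{2}{l+1}$ to leave $2CS^{-1}$, is the only place where an arithmetic slip could occur; the rest is identical in spirit to Proposition~\ref{pro:VD}, and convergence of the Neumann sums is again guaranteed by $\|C\|<1$.
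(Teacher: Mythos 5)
Your proposal is correct and is exactly the argument the paper intends: the paper omits the proof of Proposition~\ref{pro:VM}, stating only that it is established ``similarly to how Propositions~\ref{pro:VD} and~\ref{pro:VR} are proved,'' and your computation carries that out faithfully. All of your intermediate quantities check out — the coefficients $(l+1)$, $\tfrac{l+3}{l+1}$, and $1$ on $M_{01}$, $M_{10}$, $M_{11}$, the collapses $C^2+C^4S^{-2}=C^2S^{-2}$ and $CS+C^3S^{-1}=CS^{-1}$, and the bracket entries $(l+1)^2C^2S^{-2}+\tfrac{l(l+1)}{2}I_0$, $(l+1)CS^{-1}$, and $I_0$, which after multiplying by $\tfrac{2}{l+1}$ and adding $I$ give precisely $\SM(l)$.
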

	
	\begin{remark}
		If instead one updates $X_2$ once before updating $X_1$ $\ell$ times, then the asymptotic variance is
		\[
		\VM(f,\ell^{-1}) = \frac{\ell+3}{\ell + 1} \|f_{01}\|^2 + (\ell+1) \|f_{10}\|^2 + \|f_{11}\|^2 + \left\langle f_{0} + f_{1}, \SM(\ell^{-1}) (f_{0} + f_{1}) \right\rangle,
		\]
		where
		\[
		\SM(\ell^{-1}) = \Gamma^* \left( \begin{array}{cc}
			\frac{\ell + 3}{\ell+1} I_0 + 2(\ell+1) C^2 S^{-2} - \frac{\ell(\ell-1)}{\ell+1} C^2 & 2 \ell CS^{-1} - \frac{\ell(\ell-1)}{\ell+1}CS \\
			2 \ell CS^{-1} - \frac{\ell(\ell-1)}{\ell+1}CS & \frac{2\ell^2+\ell+1}{\ell+1} I_0 - \frac{\ell(\ell-1)}{\ell+1} S^2
		\end{array} \right) \Gamma.
		\]
	\end{remark}
	
	To proceed, let us consider the computational cost of the modified DG algorithm.
	If the algorithm is run verbatim, then, on average, each iteration takes $(\tau+\ell)/(\ell+1)$ units of time.
	It is, however, possible to save time through a parallelization scheme.
	Note that, in Algorithm~\ref{alg:modified}, we may first simulate the subchain
	\[
	\tilde{X}_0, \tilde{X}_{\ell}, \tilde{X}_{\ell+1}, \tilde{X}_{(\ell+1)+\ell}, \tilde{X}_{2(\ell+1)}, \dots, \tilde{X}_{(s-1)(\ell+1) + \ell}, \tilde{X}_{s(\ell+1)}, \dots
	\]
	through the standard DG sampler (Algorithm~\ref{alg:dg}).
	After simulating the subchain at a given length, for $t = s (\ell+1) + q$ where $s$ is a non-negative integer and $q \in \{1,\dots,\ell-1\}$, $\tilde{X}_t = (X_{1,t}, X_{2,t})$ can be generated by setting $X_{1,t} = X_{1,s(\ell+1)}$, and drawing $X_{2,t}$ from $\pi_1(\cdot \mid X_{1,s(\ell+1)})$.
	Hence, the repeated sampling from $\pi_1$ can be done through post-processing.
	Moreover, in an ideal setting, this can be executed in parallel, resulting in the overall runtime of the modified DG algorithm being essentially equal to the time it takes to simulate the subchain.
	A modified DG chain of length $T$ corresponds to a subchain of length roughly $2T/(\ell+1)$.
	Thus, depending on how much parallelization is feasible, the per-iteration cost (in terms of time) of the modified DG algorithm can vary between $2/(\ell+1) \times (\tau+1)/2 = (\tau+1)/(\ell+1)$ and $(\tau+\ell)/(\ell+1)$.
	
	I will first analyze the modified DG sampler when assuming that there is no parallelization.
	It will be shown that even in the absence of parallelization, this sampler is competitive against the standard DG and RG samplers.
	
	The adjusted asymptotic variance for the modified DG algorithm is
	\[
	\VM^{\dagger}(f,\ell) = \frac{\ell+\tau}{\ell + 1} \VM(f,\ell).
	\]
	Let us now compare $\VM^{\dagger}(f,\ell)$ to $\VD^{\dagger}(f)$ and $\VR^{\dagger}(f,r)$.
	
	One would expect that the modified DG sampler behaves similarly to the RG sampler with selection probability $r = 1/(\ell+1)$.
	The following result shows that the modified DG sampler is better than, but comparable to the RG sampler with $r = 1/(\ell+1)$ in terms of adjusted asymptotic variance.
	\begin{corollary} \label{cor:VMVR-1}
		For $\ell \geq 1$ and $f \in L_0^2(\pi)$,
		\[
		\VM^{\dagger}(f,\ell) \leq \VR^{\dagger}\left(f, \frac{1}{\ell+1} \right) \leq \max \left\{ \frac{2\ell+1}{\ell+1}, \frac{\ell+1}{\ell}  \right\} \VM^{\dagger}(f,\ell).
		\]
	\end{corollary}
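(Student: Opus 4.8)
The plan is to exploit a fortunate coincidence in the computation-time adjustment. Setting $r = 1/(l+1)$ in the RG time factor gives $r\tau + 1 - r = \tau/(l+1) + l/(l+1) = (l+\tau)/(l+1)$, which is exactly the per-iteration cost $(l+\tau)/(l+1)$ of the modified DG sampler. Hence the adjustment factors in $\VM^{\dagger}(f,l) = \frac{l+\tau}{l+1}\VM(f,l)$ and $\VR^{\dagger}(f,1/(l+1)) = \frac{l+\tau}{l+1}\VR(f,1/(l+1))$ are identical, and the claimed double inequality is equivalent to the purely unadjusted statement
\[
\VM(f,l) \leq \VR\!\left(f,\tfrac{1}{l+1}\right) \leq \max\!\left\{\tfrac{2l+1}{l+1},\tfrac{l+1}{l}\right\} \VM(f,l).
\]
In particular the constant $K := \max\{(2l+1)/(l+1),(l+1)/l\}$ and the cost parameter $\tau$ decouple completely, so $\tau$ plays no further role.

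Next I would decompose $f$ as in~\eqref{eq:fdecomp} and read off from Proposition~\ref{pro:VM} and Proposition~\ref{pro:VR} (evaluated at $r = 1/(l+1)$) that both variances split orthogonally into scalar multiples of $\|f_{01}\|^2$, $\|f_{10}\|^2$, $\|f_{11}\|^2$ plus a quadratic form on $\MRspace$ driven by $\SM(l)$ and $\SR(1/(l+1))$, respectively. Substituting $r = 1/(l+1)$ gives the RG coefficients $2l+1$, $(l+2)/l$, $1$, to be compared with the modified-DG coefficients $l+1$, $(l+3)/(l+1)$, $1$. Exactly as in the proof of Proposition~\ref{pro:VDVR}, a form inequality $V_1 \leq c\, V_2$ (for all $f$) holds if and only if each scalar coefficient of $V_1$ is at most $c$ times the corresponding coefficient of $V_2$ and the matching Loewner bound between the $\MRspace$-forms holds; I would then take the maximum over these finitely many ratios together with the Loewner constant.

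The crux is the pair of Loewner orderings $\SM(l) \leq \SR(1/(l+1)) \leq K\,\SM(l)$. Because every operator appearing in~\eqref{eq:SM} and~\eqref{eq:SR} is a function of $C$ and the entries commute (with $S^{-2} = (I_0 - C^2)^{-1}$), the $\Gamma^*(\cdot)\Gamma$ conjugation reduces each ordering, via the spectral calculus for the positive contraction $C$ of Lemma~\ref{lem:halmos}, to a scalar $2\times 2$ positive-semidefiniteness check: replace $C$ by a spectral value $c$, set $x = c^2/(1-c^2) \geq 0$, and verify the relevant matrix is PSD for every such $x$. For the first ordering the off-diagonal contribution cancels precisely and the determinant collapses to the constant $2/(l+1) > 0$, so $\SM(l) \leq \SR(1/(l+1))$ is immediate. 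For the second ordering the top-left and bottom-right entries are nonnegative once $K \geq (2l+1)/(l+1)$ and $K \geq (l+1)/l$, and the determinant is \emph{affine} in $x$: its constant term is a product of nonnegative quantities, while its slope factors as $2\bigl(K - \tfrac{l+1}{l}\bigr)(l+1)(K-1)$, which is nonnegative since $K \geq (l+1)/l > 1$. Hence the determinant stays nonnegative for all $x \geq 0$, yielding $\SR(1/(l+1)) \leq K\,\SM(l)$.

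Finally I would assemble the pieces as in Proposition~\ref{pro:VDVR}: the scalar comparisons give $l+1 \leq 2l+1$, $(l+3)/(l+1) \leq (l+2)/l$, $1 \leq 1$ for the first inequality, and $2l+1 \leq K(l+1)$, $(l+2)/l \leq K(l+3)/(l+1)$, $1 \leq K$ for the second (the middle one reducing to $K \geq (l+1)(l+2)/(l(l+3))$, which is implied by $K \geq (l+1)/l$), all holding by the definition of $K$. Combined with the two Loewner bounds this gives the full chain. The only genuinely nontrivial step is the determinant computation underlying the Loewner orderings, and I expect the main obstacle to be organizing that $2\times 2$ algebra cleanly; it is best isolated as an appendix lemma in the spirit of Lemma~\ref{lem:SDSR}.
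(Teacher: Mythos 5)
Your proposal is correct and follows essentially the same route as the paper: reduce to the unadjusted variances via the coincidence $r\tau+1-r=(l+\tau)/(l+1)$ at $r=1/(l+1)$, compare the scalar coefficients on $M_{01},M_{10},M_{11}$, and establish the Loewner orderings $\SM(l)\leq\SR(1/(l+1))\leq K\,\SM(l)$ (the paper's Lemma~\ref{lem:SM}) by the $2\times 2$ positive-semidefiniteness check of Lemma~\ref{lem:positive}. Your determinant computations (constant $2/(l+1)$ for the first ordering, slope $2\bigl(K-\tfrac{l+1}{l}\bigr)(l+1)(K-1)$ for the second) are correct and in fact supply the details that the paper omits from the proof of Lemma~\ref{lem:SM}.
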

	\begin{proof}
		Fix $\ell \geq 1$ and $f \in L_0^2(\pi)$.
		By Lemma~\ref{lem:SM}, 
		\[
		\SM(\ell) \leq \SR \left( \frac{1}{\ell+1} \right) \leq \max \left\{ \frac{2\ell+1}{\ell+1}, \frac{\ell+1}{\ell}  \right\} \SM(\ell).
		\]
		It then follows from Propositions~\ref{pro:VR} and~\ref{pro:VM} that
		\[
		\VM(f,\ell) \leq \VR\left(f, \frac{1}{\ell+1} \right) \leq \max \left\{ \frac{2\ell+1}{\ell+1}, \frac{\ell+1}{\ell}  \right\} \VM(f,\ell).
		\]
		The desired result is obtained by noting that $1-r+r\tau = (\ell + \tau)/(\ell+1)$ if $r = 1/(\ell+1)$.
	\end{proof}
	
	A comparison between $\VR^{\dagger}(r)$ and $\VM^{\dagger}(\ell)$ for a general~$r$ that is not $1/(\ell+1)$ can be conducted on a case-by-case basis.
	The key step is verifying whether operators of the form $k \SM(\ell) - \SR(r)$ and $k \SR(r) - \SM(\ell)$ are positive semi-definite.
	A tool for this is provided in Lemma~\ref{lem:positive}.
	
	Naturally, just like the RG sampler, the modified DG sampler enjoys robustness over the vanilla DG sampler, given that~$\ell$ is well-chosen.
	The next two results are derived from Propositions~\ref{pro:VD},~\ref{pro:VM}, and Lemma~\ref{lem:SM}.
	The proofs are extremely similar to those of Corollaries~\ref{cor:VDstar-large} and~\ref{cor:VRstar}, so they will be omitted.
	
	\begin{corollary}
		Let~$\ell$ be a function of~$\tau$ such that $\ell \to \infty$ if $\tau \to \infty$.
		Then, for any nonzero $f \in M_{10} \oplus M_{11} \oplus (I-P_1) \MRspace$, as $\tau \to \infty$,
		\[
		\frac{\VM^{\dagger}(f,\ell)}{\VD^{\dagger}(f)} \to 0.
		\]
	\end{corollary}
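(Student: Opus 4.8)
The plan is to mimic the proof of Corollary~\ref{cor:VDstar-large}, exploiting the fact that a function lying entirely in $(I-P_1)\MRspace$ collapses both quadratic forms $\SD$ and $\SM(l)$ to a single scalar block. Since $\MRspace \neq \{0\}$, Lemma~\ref{lem:halmos} guarantees that $(I-P_1)\MRspace$ is nontrivial, so I would fix a nonzero $f \in (I-P_1)\MRspace$. In the orthogonal decomposition~\eqref{eq:fdecomp} this forces $f_{01} = f_{10} = f_{11} = f_0 = 0$ and $f_1 = f$, and under $\Gamma = I_0 \oplus W$ the element $\Gamma f$ is supported on the second coordinate, with $\|Wf\| = \|f\|$ because $W$ is unitary.

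With this choice, only the lower-right blocks of $\SD$ in~\eqref{eq:SD} and of $\SM(l)$ in~\eqref{eq:SM} contribute. Reading off those blocks (namely $2I_0$ and $\frac{l+3}{l+1}I_0$) and invoking Propositions~\ref{pro:VD} and~\ref{pro:VM} immediately yields
\[
\VD(f) = 2\|f\|^2, \qquad \VM(f,l) = \frac{l+3}{l+1}\|f\|^2.
\]
Multiplying by the respective computation-time factors $(1+\tau)/2$ and $(l+\tau)/(l+1)$ gives $\VD^{\dagger}(f) = (1+\tau)\|f\|^2$ and $\VM^{\dagger}(f,l) = \frac{(l+\tau)(l+3)}{(l+1)^2}\|f\|^2$, so that
\[
\frac{\VM^{\dagger}(f,l)}{\VD^{\dagger}(f)} = \frac{l+3}{l+1} \cdot \frac{l+\tau}{(l+1)(1+\tau)}.
\]

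It then remains to send $\tau \to \infty$, whence $l \to \infty$ by hypothesis. The first factor $\frac{l+3}{l+1}$ stays bounded (it is at most $2$ for $l \geq 1$ and tends to $1$), while for the second I would apply the crude bound $(l+1)(1+\tau) \geq l\tau$ to get
\[
\frac{l+\tau}{(l+1)(1+\tau)} \leq \frac{1}{\tau} + \frac{1}{l} \to 0,
\]
since both $l$ and $\tau$ diverge. The product therefore vanishes in the limit, which establishes the claim.

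I do not anticipate a genuine obstacle: once $f$ is chosen in $(I-P_1)\MRspace$, the operator-theoretic content of Propositions~\ref{pro:VD} and~\ref{pro:VM} reduces to scalars, and what is left is an elementary limit. The only point meriting a moment's care is confirming that the ratio tends to $0$ for \emph{every} admissible growth rate of $l = l(\tau)$ — this is exactly what the inequality $(l+1)(1+\tau)\geq l\tau$ accomplishes, as it sidesteps any case analysis on whether $l$ grows faster or slower than $\tau$.
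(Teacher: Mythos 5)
Your proposal is correct and follows exactly the route the paper intends: the paper omits this proof, stating it is ``extremely similar'' to that of Corollary~\ref{cor:VDstar-large}, which is precisely what you do --- take a nonzero $f \in (I-P_1)\MRspace$, read off the lower-right blocks $2I_0$ of $\SD$ and $\tfrac{l+3}{l+1}I_0$ of $\SM(l)$, and pass to the limit. Your closing bound $(l+1)(1+\tau) \ge l\tau$ cleanly handles all admissible growth rates of $l(\tau)$, so there is nothing to add.
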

	
	\begin{corollary} \label{cor:VMVR-3}
		For $\ell \geq 1$ and $f \in L_0^2(\pi)$,
		\[
		\VM^{\dagger}(f,\ell) \leq \frac{2(\ell+\tau)}{(\ell+1)(\tau+1)} \frac{\ell^2+\ell+2+(\ell-1)\sqrt{(\ell+1)^2+1}}{2(\ell+1)} \VD^{\dagger}(f).
		\]
		In particular, if
		\begin{equation} \label{ine:l}
		\ell \leq \frac{1+\sqrt{1+ 4(\tau + 1)}}{2},
		\end{equation}
		then
		\[
		\VM^{\dagger}(f,\ell) \leq 2 \VD^{\dagger}(f).
		\]
	\end{corollary}
	
	If $\tau \gg 1$, then letting $\ell \gg 1$ under the restriction of~\eqref{ine:l} would give the desired robustness.
	In fact, having $\ell \gg 1$ and $\ell = O(\tau)$ would yield a similar effect.
	
	When post-processing and parallelization is fully exploited, $\VM^{\dagger}(f,\ell)$ can be further diminished.
	Indeed, if we assume full parallelization and consider only the cost of computation time, then $\VM^{\dagger}(f, \ell) = (\tau+1) \VM(f,\ell)/(\ell+1)$, rather than $(\tau+\ell) \VM(f,\ell) / (\ell+1)$ as before.
	One may then multiply a factor of $(\tau+1)/(\tau+\ell)$ to the any upper or lower bounds on $\VM^{\dagger}(\ell)$ in Corollaries \ref{cor:VMVR-1} and \ref{cor:VMVR-3}.

	\section{Convergence Rate} \label{sec:rate}
	
	I will now use \pcite{halmos1969two} theory to study the convergence properties of two-component Gibbs samplers, and provide an alternative proof of \cite{qin2021convergence} main result, which, loosely speaking, states that the DG Markov chain converges faster than the RG chain.
	
	I will first provide some general results regarding the convergence rates of possibly time-inhomogeneous Markov chains.
	Define $L_*^2(\pi)$ to be the set of probability measures~$\mu$ such that~$\mu$ is absolutely continuous with respect to~$\pi$, and that $\df \mu/ \df \pi \in L^2(\pi)$.
	For $\mu,\nu \in L_*^2(\pi)$, one can define their $L^2$ distance
	\[
	\|\mu - \nu\|_* = \sup_{f \in L_0^2(\pi), \, \|f\|=1} (\mu f - \nu f) = \left\| \frac{\df \mu}{\df \pi} - \frac{\df \nu}{\df \pi} \right\|.
	\]
	This distance is often used in the convergence analysis of Markov chains.
	Let $K_t$ be the $t$-step transition kernel of a possibly time-inhomogeneous Markov chain such that~$\pi$ is invariant, i.e., $\pi K_t = \pi$ for each positive integer~$t$.
	It can be checked that $\mu K_t \in L_*^2(\pi)$ whenever $\mu \in L_*^2(\pi)$.
	If $(\tilde{X}_t)$ is a chain associated with this Mtk and $\tilde{X}_0 \sim \mu \in L_*^2(\pi)$, then $\mu K_t$ is the marginal distribution of $\tilde{X}_t$. 
	The $L^2$ distance between $\mu K_t$ and~$\pi$ is
	\begin{equation} \label{eq:l2dist}
		\begin{aligned}
			\|\mu K_t - \pi\|_* &= \sup_{f \in L_0^2(\pi), \, \|f\|=1} (\mu-\pi) K_t f \\
			&= \sup_{f \in L_0^2(\pi), \, \|f\|=1} \left\langle \frac{\df \mu}{\df \pi} - 1, K_tf \right\rangle \\
			&= \left\| K_t^* \left( \frac{\df \mu}{\df \pi} - 1 \right)  \right\| \\
			&= \left\langle \frac{\df \mu}{\df \pi} - 1, K_tK_t^* \left( \frac{\df \mu}{\df \pi} - 1 \right) \right\rangle^{1/2},
		\end{aligned}
	\end{equation}
	where $K_t^*$ is the adjoint of $K_t$.
	
	Evidently, the behavior of the positive semi-definite operator $K_tK_t^*$ as $t \to \infty$ is closely related to the convergence properties of the chain.
	To make this relationship more explicit, consider the convergence rate
	\begin{equation} \nonumber
		\rho_0 = \exp \left( \sup_{\mu \in L_*^2(\pi),\; \mu \neq \pi} \limsup_{t \to \infty} t^{-1} \log \|\mu K_t - \pi\|_* \right)
	\end{equation}
	\citep[][]{roberts2001geometric,qin2020limitations}.
	It can be shown that $\rho_0 \in [0,1]$.
	If $\rho_0< \rho \in (0,1]$, then for each $\mu \in L_*^2(\pi)$ there exists $M_{\mu} < \infty$ such that
	\begin{equation} \label{ine:geometric}
		\|\mu K_t - \pi\|_* < M_{\mu} \rho^t
	\end{equation}
	for every positive integer~$t$.
	Conversely, if~\eqref{ine:geometric} holds for each~$t$, then $\rho_0 \leq \rho$.
	Hence, the smaller the rate, the faster the convergence.
	
	The following result is proved using standard techniques like those from \cite{fill1991eigenvalue,roberts1997geometric,paulin2015concentration}, with details given in Appendix~\ref{app:rate}.
	Readers are also referred to \cite{kontorovich2008concentration}, which establishes concentration inequalities for time-inhomogeneous chains.

	\begin{proposition} \label{pro:rate}
		Let $K_t$ be the $t$-step transition kernel of a Markov chain with stationary distribution~$\pi$.
		Then
		\[
		\sup_{\mu \in L_*^2(\pi),\; \mu \neq \pi} \limsup_{t \to \infty} t^{-1} \log \|\mu K_t - \pi\|_* = \log \left( \sup_{f \in L_0^2(\pi), \; \|f\|=1} \limsup_{t \to \infty} \langle f, K_t K_t^* f \rangle^{1/(2t)} \right).
		\]
		That is, 
		\[
		\rho_0 = \sup_{f \in L_0^2(\pi), \; \|f\|=1} \limsup_{t \to \infty} \langle f, K_t K_t^* f \rangle^{1/(2t)}.
		\]
	\end{proposition}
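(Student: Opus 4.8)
The plan is to reduce both sides to the single scale-invariant functional
$\rho(f) := \limsup_{t\to\infty}\langle f, K_tK_t^* f\rangle^{1/(2t)} = \limsup_{t\to\infty}\|K_t^* f\|^{1/t}$
on $L_0^2(\pi)\setminus\{0\}$, and then to show that the supremum of $\rho$ over the cone of ``density directions'' coincides with its supremum over the whole punctured space. First I would invoke the identity~\eqref{eq:l2dist}, which gives $\|\mu K_t-\pi\|_* = \langle g, K_tK_t^* g \rangle^{1/2}$ with $g := \df\mu/\df\pi - 1 \in L_0^2(\pi)$. Writing $t^{-1}\log\|\mu K_t-\pi\|_* = \log\langle g, K_tK_t^* g\rangle^{1/(2t)}$ and using that $\log$ is continuous and increasing (so that it commutes with $\limsup_t$), the left-hand side becomes $\log\sup_{\mu\neq\pi}\rho(g)$, where $g$ ranges over the cone $\mathcal{C} := \{\df\mu/\df\pi - 1 : \mu\in L_*^2(\pi),\ \mu\neq\pi\}$. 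Here I would record the elementary but essential fact that $\rho$ is scale invariant, $\rho(cf)=\rho(f)$ for $c\neq 0$, because $|c|^{1/t}\to 1$; this lets me discard all normalizations and compare $\sup_{g\in\mathcal{C}}\rho(g)$ with $\sup_{f\in L_0^2(\pi)\setminus\{0\}}\rho(f)$, the latter being the right-hand side.

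The inequality $\sup_{\mathcal{C}}\rho \le \sup_{L_0^2(\pi)\setminus\{0\}}\rho$ is immediate, since $\mathcal{C}\subseteq L_0^2(\pi)\setminus\{0\}$, and it yields ``LHS $\le$ RHS''. The reverse inequality is the crux. The difficulty is that $\mathcal{C}$ is genuinely smaller than $L_0^2(\pi)$, its elements being subject to the one-sided bound $g\ge -1$, and that $\rho$ is not continuous, so one cannot simply approximate an arbitrary direction by densities in $L^2(\pi)$ and pass to the limit: a perturbation that is small in $L^2(\pi)$ but constant in~$t$ overwhelms the exponential decay for large~$t$ and destroys the rate.

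To circumvent this I would use a positive/negative-part decomposition in place of any approximation. Given $f\in L_0^2(\pi)\setminus\{0\}$, set $\hat f^{+} := f^{+}-\pi f^{+}$ and $\hat f^{-} := f^{-}-\pi f^{-}$, so that $f = \hat f^{+}-\hat f^{-}$ (using $\pi f^{+}=\pi f^{-}$, which holds because $\pi f = 0$). Each $\hat f^{\pm}$ is bounded below (by $-\pi f^{\pm}$) and nonzero, so a small positive multiple lies in $\mathcal{C}$: for $c>0$ small, $1+c\hat f^{\pm}$ is a nonnegative $L^2(\pi)$ density with mean one, whence by scale invariance $\rho(\hat f^{\pm})=\rho(c\hat f^{\pm})\le\sup_{\mathcal{C}}\rho$. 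It then remains to dominate $\rho(f)$ by the rates of its two pieces. From $\|K_t^* f\|\le\|K_t^*\hat f^{+}\|+\|K_t^*\hat f^{-}\|\le 2\max(\|K_t^*\hat f^{+}\|,\|K_t^*\hat f^{-}\|)$ I obtain $\|K_t^* f\|^{1/t}\le 2^{1/t}\max(\|K_t^*\hat f^{+}\|^{1/t},\|K_t^*\hat f^{-}\|^{1/t})$; taking $\limsup_t$, using $2^{1/t}\to 1$ and the identity $\limsup_t\max(a_t,b_t)=\max(\limsup_t a_t,\limsup_t b_t)$, gives $\rho(f)\le\max(\rho(\hat f^{+}),\rho(\hat f^{-}))\le\sup_{\mathcal{C}}\rho$. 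Taking the supremum over $f$ yields ``RHS $\le$ LHS'', and combining the two inequalities and reinstating the logarithm closes the argument.

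I expect the only genuine obstacle to be this reverse inequality, and in particular the temptation to establish it by an $L^2(\pi)$ density-approximation argument, which fails exactly because $\rho$ is discontinuous. The decomposition $f=\hat f^{+}-\hat f^{-}$ into bounded-below pieces, combined with the scale invariance of $\rho$ and the $\limsup$--$\max$ identity, is what makes the step go through with no analytic estimates and without any spectral or continuity input.
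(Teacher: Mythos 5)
Your proposal is correct and follows essentially the same route as the paper's proof: the forward inequality via the identity~\eqref{eq:l2dist} and scale invariance, and the reverse inequality via the positive/negative-part decomposition of $f$ (your centered $\hat f^{\pm}$ are scalar multiples of the paper's $c^{-1}f_{\pm}-1$), the triangle inequality for $\|K_t^*(\cdot)\|$, and the $\limsup$--$\max$ identity. The only differences are cosmetic (you absorb the factor $2$ before taking logarithms rather than after), so nothing further is needed.
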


	Let $\rhoD$ and $\rhoR(r)$ be the convergence rates of the DG and RG chains, respectively, where~$r$ is the selection probability.
	That is,
	\[
	\begin{aligned}
		\rhoD := \exp \left( \sup_{\mu \in L_*^2(\pi),\; \mu \neq \pi} \limsup_{t \to \infty} t^{-1} \log \|\mu \PD_t - \pi\|_* \right), \\
		\rhoR(r) := \exp \left( \sup_{\mu \in L_*^2(\pi),\; \mu \neq \pi} \limsup_{t \to \infty} t^{-1} \log \|\mu R_t - \pi\|_* \right).
	\end{aligned}
	\]
	Let us now use Proposition~\ref{pro:rate} to find the relationship between the two rates.
	The following lemma is useful.
	
	\begin{lemma} \label{lem:At}
		Suppose that in Proposition~\ref{pro:rate}, $K_tK_t^* = A^{m(t)}$ for~$t$ sufficiently large, where $A$ is a positive contraction on $L_0^2(\pi)$ and $m(t)$ is a non-negative function of~$t$.
		Then
		\[
		\sup_{f \in L_0^2(\pi), \; \|f\|=1} \limsup_{t \to \infty} \langle f, K_t K_t^* f \rangle^{1/(2t)} = \|A\|^m,
		\]
		where
		\[
		m = \liminf_{t \to \infty} (2t)^{-1} m(t).
		\]
	\end{lemma}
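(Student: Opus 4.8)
The plan is to analyze the self-adjoint positive contraction $A$ through its spectral resolution and reduce everything to the scalar quantity $\|A\| = \max \sigma(A)$. Since $K_t K_t^* = A^{m(t)}$ for all large $t$ and only the behavior of the $\limsup$ as $t \to \infty$ matters, I may replace $K_t K_t^*$ by $A^{m(t)}$ throughout. Writing $A = \int_{[0,\|A\|]} \lambda \, dE_\lambda$ for the spectral measure $E$, the functional calculus gives, for any unit vector $f$, $\langle f, A^{m(t)} f \rangle = \int_{[0,\|A\|]} \lambda^{m(t)} \, d\mu_f(\lambda)$, where $\mu_f(\cdot) = \langle f, E(\cdot) f \rangle$ is a probability measure on $[0,\|A\|]$. (The functional calculus makes sense for the possibly non-integer exponents $m(t)$ precisely because $A$ is positive.) I will treat the generic case $\|A\| \in (0,1)$; the boundary values $\|A\| \in \{0,1\}$ follow from the same estimates or a direct check.

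For the upper bound I would estimate $\langle f, A^{m(t)} f \rangle \le \|A^{m(t)}\| = \|A\|^{m(t)}$, using that the norm of a positive operator equals the top of its spectrum together with the spectral mapping $\sigma(A^{m(t)}) = \{\lambda^{m(t)} : \lambda \in \sigma(A)\}$. Hence $\langle f, A^{m(t)} f \rangle^{1/(2t)} \le \|A\|^{m(t)/(2t)}$ for every $f$ and $t$. Passing to logarithms gives $\frac{m(t)}{2t}\log\|A\|$, and since $\log\|A\| \le 0$, the identity $\limsup_t (c\,g(t)) = c\,\liminf_t g(t)$ valid for $c \le 0$ yields $\limsup_t \frac{m(t)}{2t}\log\|A\| = m\log\|A\|$; continuity of $\exp$ then gives $\limsup_t \|A\|^{m(t)/(2t)} = \|A\|^m$. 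Thus the supremum on the left is at most $\|A\|^m$.

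For the matching lower bound I would localize a test function at the top of the spectrum. Because $A$ is self-adjoint, $\|A\| \in \sigma(A)$, so for each $\epsilon \in (0,\|A\|)$ the spectral projection $E([\|A\|-\epsilon,\|A\|])$ is nonzero; choose a unit vector $f_\epsilon$ in its range. Then $\mu_{f_\epsilon}$ is a probability measure supported on $[\|A\|-\epsilon,\|A\|]$, and since $\lambda \mapsto \lambda^{m(t)}$ is nondecreasing, $\langle f_\epsilon, A^{m(t)} f_\epsilon \rangle \ge (\|A\|-\epsilon)^{m(t)}$ for every $t$. The crucial feature is that this bound holds uniformly in $t$ for the fixed vector $f_\epsilon$, so the same limsup computation (now with base $\|A\|-\epsilon \in (0,1)$) gives $\limsup_t \langle f_\epsilon, A^{m(t)} f_\epsilon \rangle^{1/(2t)} \ge (\|A\|-\epsilon)^m$. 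Hence the supremum over $f$ is at least $(\|A\|-\epsilon)^m$, and letting $\epsilon \downarrow 0$ with continuity of $x \mapsto x^m$ delivers the lower bound $\|A\|^m$, completing the argument.

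I expect the main obstacle to be handling the interchange of the order-reversing exponentiation with the $\limsup$, that is, justifying that the outer $\limsup$ together with the exponent $m(t)/(2t)$ produces a $\liminf$ through $m$. This is exactly why the statement pairs a $\limsup$ on the full expression with a $\liminf$ in the definition of $m$, and it hinges on $\|A\| \le 1$ so that $x \mapsto \|A\|^x$ is nonincreasing. The only other delicate point is ensuring $E([\|A\|-\epsilon,\|A\|]) \neq \{0\}$, which follows from self-adjointness since the top of the spectrum lies in $\sigma(A)$; the degenerate case $\|A\| = 0$ (equivalently $A = 0$) is trivial because then every inner product vanishes.
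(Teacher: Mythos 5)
Your proposal is correct and follows essentially the same route as the paper's proof: the upper bound via $\langle f, A^{m(t)}f\rangle \le \|A\|^{m(t)}$ combined with $\|A\|\le 1$ turning the outer $\limsup$ into the $\liminf$ defining $m$, and the lower bound by picking a unit vector in the range of the spectral projection onto a neighborhood of $\|A\|$ and letting $\varepsilon \downarrow 0$. The only differences are cosmetic (closed versus half-open spectral interval, and slightly more explicit bookkeeping of the degenerate cases).
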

	\begin{proof}
		Because $A$ is positive semi-definite and $m(t)$ is non-negative, for $f \in L_0^2(\pi)$,
		\[
		\langle f, A^{m(t)} f \rangle \leq \|A^{m(t)}\| = \|A\|^{m(t)}.
		\]
		Then, since $\|A\| \leq 1$,
		\[
		\sup_{f \in L_0^2(\pi), \; \|f\|=1} \limsup_{t \to \infty} \langle f, K_tK_t^* f \rangle^{1/(2t)} = \sup_{f \in L_0^2(\pi), \; \|f\|=1} \limsup_{t \to \infty} \langle f, A^{m(t)} f \rangle^{1/(2t)} \leq \|A\|^m. 
		\]
		Consider the reverse inequality,
		which obviously holds when $\|A\| = 0$.
		Suppose that $\|A\| > 0$, and let $\varepsilon \in (0,\|A\|)$ be arbitrary.
		$A$ has a spectral decomposition $A = \int_{-\infty}^{\infty} \lambda E_A(\df \lambda)$, where $E_A$ is the projection-valued measure associated with~$A$ \citep[see, e.g.,][Theorem 3.15]{kubrusly2012spectral}.
		There exists a function $f_{(\varepsilon)}$ in the range of $E_A((\|A\|-\varepsilon, \|A\|])$ such that $\|f_{(\varepsilon)}\| = 1$.
		Then
		\[
		\begin{aligned}
			\sup_{f \in L_0^2(\pi), \; \|f\|=1} \limsup_{t \to \infty} \langle f, K_tK_t^* f \rangle^{1/(2t)} & \geq  \limsup_{t \to \infty} \langle f_{(\varepsilon)}, A^{m(t)} f_{(\varepsilon)} \rangle^{1/(2t)} \\
			&=  \limsup_{t \to \infty} \left( \int_{-\infty}^{\infty} \lambda^{m(t)} \langle f_{(\varepsilon)}, E_A(\df \lambda) f_{(\varepsilon)} \rangle \right)^{1/(2t)} \\
			& \geq (\|A\|-\varepsilon)^m.
		\end{aligned}
		\]
		Since $\varepsilon$ is arbitrary,
		\[
		\sup_{f \in L_0^2(\pi), \; \|f\|=1} \limsup_{t \to \infty} \langle f, K_tK_t^* f \rangle^{1/(2t)} \geq \|A\|^m.
		\]
	\end{proof}
	
	Consider the DG and RG Markov chains.
	Recall from~\eqref{eq:PD} and~\eqref{eq:PR} that their Mtks are as follows: $\PD_{2s-1} = (P_1P_2)^{s-1}P_1$ and $\PD_{2s} = (P_1P_2)^s$ for any positive integer~$s$, while
	$\PR_t = [(1-r)P_1+rP_2]^t$ for any positive integer~$t$.
	Note that for $t \geq 2$, $\PD_t \PD_t^* = (P_1P_2P_1)^{t-1}$.
	In light of Proposition~\ref{pro:rate} and Lemma~\ref{lem:At},
	\[
	\rhoD = \|P_1P_2P_1\|^{1/2}, \quad \rhoR(r) = \|(1-r)P_1 + rP_2\|.
	\]
	Let $f \in L_0^2(\pi)$ be decomposed as in~\eqref{eq:fdecomp}.
	That is, $f = \sum_{i=0}^1 \sum_{j=0}^1 f_{ij} + f_{0} + f_{1}$.
	Assume that $M_{00} = \{0\}$ and $\MRspace \neq \{0\}$.
	One can use Lemma~\ref{lem:halmos} to obtain
	$P_1P_2P_1 f = C^2 f_{0}$.
	Then $\|P_1P_2P_1\| = \|C\|^2$.
	On the other hand, 
	\[
	[(1-r) P_1 + r P_2] f = (1-r) f_{01} + r f_{10} + [(1-r)P_1|_{\MRspace} + rP_2|_{\MRspace}] (f_{0} + f_{1}).
	\]
	It then follows from Lemma~\ref{lem:structure} that
	\[
	\begin{aligned}
		\|(1-r) P_1 + r P_2\| &= \max \left\{ 1-r, \; r, \; \frac{\|I_0 + \sqrt{(1-2r)^2 + 4r(1-r) C^2}\|}{2}  \right\} \\
		&= \max \left\{ 1-r, \; r, \; \frac{1 + \sqrt{(1-2r)^2 + 4r(1-r) \|C\|^2}}{2}  \right\} \\
		&= \frac{1 + \sqrt{(1-2r)^2 + 4r(1-r) \|C\|^2}}{2} .
	\end{aligned}
	\]
	Note that the second equality holds since $0 \leq \|C\| \leq I_0$, and 
	\[
	x \mapsto \frac{1+ \sqrt{(1-2r)^2 + 4r(1-r)x^2}}{2}
	\]
	is an increasing function on $[0,1]$.
	In summary, we have obtained the following result, which is \pcite{qin2021convergence} Theorem 4.1.
	\begin{corollary} \citep{qin2021convergence} \label{cor:rates}
		Assume that $M_{00} = \{0\}$ and $\MRspace \neq \{0\}$.
		Then
		\[
		\rhoD = \|C\|, \quad \rhoR(r) = \frac{1 + \sqrt{(1-2r)^2 + 4r(1-r) \|C\|^2}}{2} .
		\]
	\end{corollary}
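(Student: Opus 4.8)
The plan is to apply the operator-theoretic convergence formula of Proposition~\ref{pro:rate} to each chain, thereby reducing the computation of $\rhoD$ and $\rhoR(r)$ to the operator norms $\|P_1P_2P_1\|$ and $\|(1-r)P_1+rP_2\|$, and then to evaluate those norms through the block representations supplied by Lemmas~\ref{lem:halmos} and~\ref{lem:structure}. Throughout I would use the standing assumptions $M_{00}=\{0\}$ and $\MRspace\neq\{0\}$ together with the decomposition~\eqref{eq:fdecomp}.

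For the DG chain I would first establish the kernel identity $\PD_t\PD_t^* = (P_1P_2P_1)^{t-1}$ for $t\geq 2$. Recalling from~\eqref{eq:PD} that $\PD_{2s}=(P_1P_2)^s$ and $\PD_{2s-1}=(P_1P_2)^{s-1}P_1$, and that $P_1,P_2$ are self-adjoint idempotents, this follows by collapsing the adjacent repeated projections via $P_i^2=P_i$: for the even case $\PD_{2s}\PD_{2s}^*=(P_1P_2)^s(P_2P_1)^s$ telescopes (the central $P_2P_2=P_2$) into the alternating string $(P_1P_2P_1)^{2s-1}$, and the odd case is analogous. Since $P_1P_2P_1$ is a positive contraction, being self-adjoint with $\langle f,P_1P_2P_1 f\rangle=\|P_2P_1 f\|^2\in[0,\|f\|^2]$, I can invoke Lemma~\ref{lem:At} with $A=P_1P_2P_1$ and $m(t)=t-1$, so that $m=\liminf_{t\to\infty}(2t)^{-1}(t-1)=1/2$ and hence $\rhoD=\|P_1P_2P_1\|^{1/2}$.

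Next I would compute $\|P_1P_2P_1\|$. Decomposing an arbitrary $f$ as in~\eqref{eq:fdecomp}, the operator $P_1P_2P_1$ annihilates $M_{01}$, $M_{10}$, and $M_{11}$ (the first because $P_2f_{01}=0$, the other two because $P_1f_{10}=P_1f_{11}=0$), so only the $\MRspace$ component contributes. Substituting the representations of $P_1|_{\MRspace}$ and $P_2|_{\MRspace}$ from Lemma~\ref{lem:halmos} and multiplying the three blocks gives $P_1P_2P_1|_{\MRspace}=\Gamma^*\bigl(\begin{smallmatrix}C^2&0\\0&0\end{smallmatrix}\bigr)\Gamma$, whence $\|P_1P_2P_1\|=\|C^2\|=\|C\|^2$ and therefore $\rhoD=\|C\|$.

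For the RG chain the operator $(1-r)P_1+rP_2$ is self-adjoint, so $\PR_t\PR_t^*=[(1-r)P_1+rP_2]^{2t}$, and Lemma~\ref{lem:At} with $m(t)=2t$ (hence $m=1$) yields directly $\rhoR(r)=\|(1-r)P_1+rP_2\|$. The same decomposition shows this mixture acts as multiplication by $1-r$ on $M_{01}$, by $r$ on $M_{10}$, by $0$ on $M_{11}$, and, by Lemma~\ref{lem:structure}, as $\mathrm{diag}([I_0+\sqrt{\Delta(C)}]/2,\,[I_0-\sqrt{\Delta(C)}]/2)$ up to unitary conjugation on $\MRspace$. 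I expect the main delicacy to be passing the operator norm through the square root: because $x\mapsto \tfrac12\bigl(1+\sqrt{(1-2r)^2+4r(1-r)x^2}\bigr)$ is increasing on $[0,1]$ and $C$ is a positive contraction, the largest eigenvalue on $\MRspace$ equals $\tfrac12\bigl(1+\sqrt{(1-2r)^2+4r(1-r)\|C\|^2}\bigr)$. Finally I would observe that this value dominates both $1-r$ and $r$, since $\sqrt{(1-2r)^2+4r(1-r)\|C\|^2}\geq|1-2r|$, so the supremum over all components produces the asserted formula for $\rhoR(r)$. The principal obstacles are thus the bookkeeping of the time-inhomogeneous DG kernel products and the monotonicity argument that legitimizes moving $\|\cdot\|$ inside $\Delta(\cdot)$.
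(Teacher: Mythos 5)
Your proposal is correct and follows essentially the same route as the paper: Proposition~\ref{pro:rate} combined with Lemma~\ref{lem:At} applied to $\PD_t\PD_t^*=(P_1P_2P_1)^{t-1}$ and $\PR_t\PR_t^*=[(1-r)P_1+rP_2]^{2t}$, then the block representations of Lemmas~\ref{lem:halmos} and~\ref{lem:structure} together with the monotonicity of $x\mapsto\tfrac12\bigl(1+\sqrt{(1-2r)^2+4r(1-r)x^2}\bigr)$ to evaluate the two operator norms. The only difference is that you spell out the telescoping of the projection products and the positivity of $P_1P_2P_1$, details the paper leaves implicit.
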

	
	\begin{remark}
		The formula $\rhoD= \|C\|$ can be viewed as a special case of well-known results concerning the convergence rates of alternating projections \citep{kayalar1988error,badea2012rate}.
	\end{remark}
	
	Corollary~\ref{cor:rates} allows for a comparison between $\rhoR(r)$ and $\rhoD$.
	To adjust for computation time, one should raise a convergence rate to the power of the number of iterations that the associated algorithm can simulate in unit time.
	To be precise, assume as before that one iteration of the DG and RG algorithms take $(1+\tau)/2$ and $r\tau + 1-r$ units of time respectively, and let
	\[
	\rhoD^{\dagger} = \rhoD^{2/(1+\tau)}, \quad \rhoR(r)^{\dagger} = \rhoR(r)^{1/(r\tau+1-r)}.
	\] 
	By Corollary~\ref{cor:rates} and Young's inequality, for $r \in (0,1)$,
	\[
	\rhoR(r)^{\dagger} \geq \|C\|^{2r(1-r)/(r\tau+1-r)} \geq \|C\|^{2/(1+\tau)} = \rhoD^{\dagger}. 
	\]
	Moreover, equality holds only if $\|C\|=1$.
	It is in this sense that the DG chain has a faster rate of convergence than the RG chain, after adjusting for computational cost.
	It is worth mentioning that this does not imply that 
	\[
	\|\mu \PD_{\lfloor 2t/(1+\tau) \rfloor} - \pi \|_* \leq \|\mu \PR_{\lfloor t/(r\tau +1 -r) \rfloor} - \pi\|_*
	\]
	for every $\mu \in L_*^2(\pi)$ and~$t$ large enough.
	That is, for certain starting distributions~$\mu$, it is still possible for the RG chain to approach~$\pi$ faster than the DG chain.
	

	Finally, consider the modified DG algorithm with~$\ell$ consecutive draws from~$\pi_1$, as described in Section~\ref{ssec:modified}.
	Denote its convergence rate by $\rhoM(\ell)$.
	For $t = (\ell+1)s+q$, where~$s$ is a non-negative integer, and $q \in \{0,\dots,\ell\}$, the $t$-step Mtk of the modified DG chain is $\PM_{(\ell+1)s+q} = (P_1P_2)^s$ if $q=0$, and $\PM_{(\ell+1)s+q} = (P_1P_2)^s P_1$ if $q \in \{1,\dots,\ell\}$.
	Using Lemma~\ref{lem:halmos}, Proposition~\ref{pro:rate}, and Lemma~\ref{lem:At}, one can then establish the following.
	\begin{proposition}
		Assume that $M_{00} = \{0\}$ and $\MRspace \neq \{0\}$.
		Then
		\[
		\rhoM(\ell) = \|C\|^{2/(\ell+1)}.
		\]
	\end{proposition}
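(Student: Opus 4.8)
The plan is to invoke Proposition~\ref{pro:rate} together with Lemma~\ref{lem:At}, taking the positive contraction to be $A = P_1 P_2 P_1$. The preceding discussion already established that $\|P_1 P_2 P_1\| = \|C\|^2$ (indeed $P_1 P_2 P_1 = (P_2 P_1)^* (P_2 P_1)$ is a positive contraction with $\|P_1P_2P_1\| = \|P_2P_1\|^2 = \|C\|^2$), so once I show that $\PM_t \PM_t^*$ is a suitable integer power of $A$ for all large $t$, the result follows by computing a single $\liminf$. Thus the crux is to express $\PM_t \PM_t^*$ in the form $A^{m(t)}$ and to identify $m = \liminf_{t \to \infty} (2t)^{-1} m(t)$.

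First I would compute $\PM_t \PM_t^*$ from the Mtks given just before the statement. Write $t = (l+1)s + q$ with $q \in \{0,\dots,l\}$. For $q \geq 1$, $\PM_t = (P_1 P_2)^s P_1$; since each $P_i$ is self-adjoint and idempotent, this alternating product is a palindrome and hence self-adjoint, so $\PM_t \PM_t^* = \PM_t^2$. Repeatedly using $P_1^2 = P_1$, the identity $P_1 (P_1 P_2)^s = (P_1 P_2)^s$, and $(P_1 P_2)^k P_1 = (P_1 P_2 P_1)^k$, one simplifies $\PM_t^2 = (P_1 P_2 P_1)^{2s}$. For $q = 0$, $\PM_t = (P_1 P_2)^s$ is not self-adjoint, and $\PM_t \PM_t^* = (P_1 P_2)^s (P_2 P_1)^s$; collapsing the central $P_2 P_2 = P_2$ and again applying the palindrome identities gives $(P_1 P_2 P_1)^{2s-1}$. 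Hence $\PM_t \PM_t^* = (P_1 P_2 P_1)^{m(t)}$ with $m(t) = 2s$ when $q \geq 1$ and $m(t) = 2s-1$ when $q = 0$, valid for all $s \geq 1$, i.e., for all large $t$. Because $M_{00} = \{0\}$ and the Halmos subspaces are invariant under $P_1$ and $P_2$, this is an operator identity on all of $L_0^2(\pi)$: via Lemma~\ref{lem:halmos} the restriction to $\MRspace$ has $(1,1)$ block $C^{2m(t)}$, while $\PM_t \PM_t^*$ and $(P_1P_2P_1)^{m(t)}$ both vanish on $M_{01} \oplus M_{10} \oplus M_{11}$ once $m(t) \geq 1$.

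With this identity in hand I would apply Lemma~\ref{lem:At}. Since $q$ is bounded and $m(t) \in \{2s-1, 2s\}$, in both regimes $(2t)^{-1} m(t) \to 1/(l+1)$ as $s \to \infty$, so $m = \liminf_{t \to \infty} (2t)^{-1} m(t) = 1/(l+1)$. Lemma~\ref{lem:At} then yields
\[
\sup_{f \in L_0^2(\pi), \; \|f\|=1} \limsup_{t \to \infty} \langle f, \PM_t \PM_t^* f \rangle^{1/(2t)} = \|P_1 P_2 P_1\|^m = (\|C\|^2)^{1/(l+1)} = \|C\|^{2/(l+1)},
\]
and Proposition~\ref{pro:rate} identifies the left-hand side with $\rhoM(l)$, completing the argument.

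The main obstacle I anticipate is the bookkeeping in the operator algebra: verifying the palindrome and collapse identities cleanly, and keeping the $q=0$ and $q \geq 1$ cases straight, since they differ by exactly one power of $P_1 P_2 P_1$ yet must feed into the same $\liminf$. A secondary point requiring care is that $\PM_t \PM_t^* = A^{m(t)}$ must hold as an identity on the whole space rather than merely on $\MRspace$; this is precisely where $M_{00} = \{0\}$ and the invariance of the Halmos subspaces enter, ensuring the contributions from $M_{01}, M_{10}, M_{11}$ vanish for large $t$ and do not perturb the norm computation.
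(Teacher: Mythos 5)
Your proposal is correct and follows exactly the route the paper intends (the paper only sketches this proof, citing Lemma~\ref{lem:halmos}, Proposition~\ref{pro:rate}, and Lemma~\ref{lem:At}): you identify $\PM_t\PM_t^*$ as $(P_1P_2P_1)^{m(t)}$ with $m(t)\in\{2s-1,2s\}$, use $\|P_1P_2P_1\|=\|C\|^2$, and compute $m=1/(l+1)$. The operator identities and the handling of the $q=0$ versus $q\geq 1$ cases all check out.
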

	When there is no post-processing and parallelization, it takes $(\ell + \tau)/(\ell+1)$ units of time to simulate one step of the chain.
	To adjust for computation cost, let $\rhoM^{\dagger}(\ell) = \rhoM(\ell)^{(\ell+1)/(\ell+\tau)}$.
	Then we have the following comparison result:
	\[
	\rhoR^{\dagger}\left( \frac{1}{\ell+1} \right) \geq \|C\|^{2\ell/[(\ell+1)(\ell+\tau)]} \geq \rhoM^{\dagger}(\ell) =  \|C\|^{2/(\ell+\tau)} \geq \|C\|^{2/(1+\tau)} = \rhoD^{\dagger}. 
	\]
	This, along with Corollary~\ref{cor:VMVR-1}, shows that if $r = 1/(\ell+1)$, the modified DG algorithm is better than, but comparable to the RG algorithm in terms of adjusted asymptotic variance and convergence rate.
	The standard DG algorithm is the best out of the three algorithms in terms of convergence rate, but least robust in terms of asymptotic variance.
	
	If post-processing and parallelization is employed, then, in the best case scenario, the modified DG algorithm would have the same convergence rate as the standard DG algorithm.
	Indeed, with full parallelization, the per-iteration cost of the modified DG algorithm is $(\tau + 1)/(\ell+1)$.
	In this case, $\rhoM^{\dagger}(\ell) = \rhoM(\ell)^{(\ell+1)/(\tau+1)} = \|C\|^{2/(\tau+1)}$, so $\rhoM^{\dagger}(\ell) = \rhoD^{\dagger}$.

	\section{Discussion} \label{sec:discuss}

	The methods herein can be used to analyze other variants of two-component Gibbs samplers.
	Indeed, let $(\tilde{X}_t)$ be a Markov chain with $(t',t'+t)$-Mtk $K_{t',t'+t}$.
	Suppose that for non-negative integers~$t$ and~$t'$, the operator $K_{t',t'+t}$ is in an algebra of finite type generated by $P_1$, $P_2$, and the identity operator~$I$.
	Then $K_{t',t'+t}$ leaves $M_{00}$, $M_{01}$, $M_{10}$, $M_{11}$ and $\MRspace$ invariant.
	Moreover, $K_{t',t'+t}|_{\MRspace}$ has a matrix representation in accordance with Lemma~\ref{lem:halmos}.
	In principle, one can analyze the convergence rate and asymptotic variance of the associated MCMC algorithm through elementary matrix calculations.
	In the Supplement, this idea is applied to a random sequence scan Gibbs sampler that has Mtk $(P_1P_2 + P_2P_1)/2$.
	There are many other interesting variants of two-component Gibbs samplers that can be studied in future works.
	It'd be interesting to know which variant is optimal in terms of asymptotic variance and/or convergence rate.
	
	In applications of MCMC, one often needs to evaluate $\pi f = \int_{\X} f(x) \pi(\df x)$ for some vector-valued $f: \X \to \mathbb{R}^p$, where~$p$ is some positive integer \citep{vats2019multivariate}. 
	Based on a Markov chain $(\tilde{X}_t)$, the sample mean $S_T(f) = T^{-1} \sum_{t=0}^{T-1} f(\tilde{X}_t)$ is then a random vector.
	Under regularity conditions, the multivariate central limit theorem holds:
	\[
	\sqrt{T} [S_T(f) - \pi f] \xrightarrow{d} \mbox{N}_p(0,\tilde{V}(f)) \quad \text{as } T \to \infty,
	\]
	where $\tilde{V}(f)$ is an asymptotic covariance matrix.
	If we use $\tilde{V}_{\scriptsize\mbox{D}}(f)$ and $\tilde{V}_{\scriptsize\mbox{R}}(f,r)$ to denote the asymptotic covariance matrices associated with the DG and RG samplers, respectively.
	Then for $a \in \mathbb{R}^p$,
	\[
	\VD(a^{\top}f) = a^{\top}\tilde{V}_{\scriptsize\mbox{D}}(f) a, \quad \VR(a^{\top} f, r) = a^{\top} \tilde{V}_{\scriptsize\mbox{R}}(f,r) a.
	\]
	Thus, Proposition~\ref{pro:VDVR} provides a Loewner ordering for $\VD(f)$ and $\VR(f)$.
	That is,
	\[
	\tilde{V}_{\scriptsize\mbox{D}}(f) \leq k_1(r) \tilde{V}_{\scriptsize\mbox{R}}(f,r), \quad \tilde{V}_{\scriptsize\mbox{R}}(f,r) \leq k_2(r) \tilde{V}_{\scriptsize\mbox{D}}(f),
	\]
	where $k_1$ and $k_2$ are defined in the said proposition.

	One future research avenue is to study the efficiency of the RG sampler with $r \neq 1/2$ and the modified DG sampler with $\ell > 1$ when there is a large discrepancy in the variability of $X_1$ and $X_2$.
	For instance, assume that we wish to estimate $\pi f$ and $\pi g$ where $f(x_1, x_2)$ depends only on $x_1$ and $g(x_1, x_2)$ depends only on $x_2$.
	If the variance of $f(X_1, X_2)$ is much larger than that of $g(X_1, X_2)$, how much can be gained from more frequent updates of $X_1$?

	A natural question is whether the techniques herein can be used to study Gibbs sampler with more than two components.
	Gibbs samplers with~$n$ components are associated with Markov operators generated by~$n$ orthogonal projections.
	Unfortunately, results like Lemma~\ref{lem:halmos} do not extend to the $n \geq 3$ case in general.
	See Section~11 of \cite{bottcher2010gentle} for a review.
	Whether it is possible to extend results in this work to the $n \geq 3$ case is a subject for future work.
	See \cite{greenwood1998information}, \cite{andrieu2016random}, \cite{roberts2016suprising}, and the recent work \cite{chlebicka2023solidarity} for studies on this topic based on other tools.

	\vspace{2cm}
	\appendix

	{\noindent \LARGE \bf Appendix}

	\section{Technical Results}
	
	\begin{lemma} \label{lem:k1k2}
		Let $k_1(r)$ and $k_2(r)$ be defined as in Proposition~\ref{pro:VDVR}, i.e.,
		\[
		\begin{aligned}
			k_1(r) &= 1-r+r^2 + \sqrt{r^2(1-r)^2 + (1-2r)^2}, \\
			k_2(r) &= \frac{1-r+r^2}{2r(1-r)} + \frac{\sqrt{r^2(1-r)^2 + (1-2r)^2}}{2r(1-r)}.
		\end{aligned}
		\]
		Then, for $r \in (0,1)$, each of the following holds.
		\begin{center}
			\begin{tabular}{ccc}
				(i) $k_1(r) \geq 1$; & (ii) $k_1(r) > 2r/(2-r)$; & (iii) $k_1(r) > 2(1-r)/(1+r)$; \\
				(iv) $k_2(r) \geq 2$; & (v) $k_2(r) > (2-r)/(2r)$; & (vi) $k_2(r) > (1+r)/[2(1-r)]$.
			\end{tabular}
		\end{center}
	\end{lemma}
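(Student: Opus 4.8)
The plan is to reduce all six inequalities to statements about the single square-root term $R := \sqrt{r^2(1-r)^2 + (1-2r)^2}$, so that $k_1(r) = (1-r+r^2) + R$ and $k_2(r) = k_1(r)/[2r(1-r)]$. Two elementary bounds will do almost all the work. First, since $R^2 - r^2(1-r)^2 = (1-2r)^2 \ge 0$, we have $R \ge r(1-r)$ with equality iff $r=1/2$; call this (A). Second, since $R^2 - (1-2r)^2 = r^2(1-r)^2 > 0$ for $r \in (0,1)$, we have $R > |1-2r|$; call this (B).

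With these in hand I would dispatch (i), (v), (vi) immediately. For (i), bound (A) gives $k_1(r) = 1 - r(1-r) + R \ge 1$, with equality at $r=1/2$. For (v) and (vi) I first clear denominators using $k_2(r) = k_1(r)/[2r(1-r)]$: inequality (v) is equivalent to $k_1(r) > (2-r)(1-r) = (1-r+r^2) + (1-2r)$, and (vi) is equivalent to $k_1(r) > r(1+r) = (1-r+r^2) + (2r-1)$; both follow at once from (B), since $|1-2r| \ge 1-2r$ and $|1-2r| \ge 2r-1$. For (iv), writing $u = r(1-r) \in (0,1/4]$ and using $R = \sqrt{u^2 - 4u + 1}$ turns $k_2(r) \ge 2$ into $\sqrt{u^2 - 4u + 1} \ge 5u - 1$, which is trivial when $5u \le 1$ and otherwise follows by squaring to $6u \ge 24u^2$, i.e. $u \le 1/4$, with equality exactly at $u = 1/4$ (that is, $r=1/2$).

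The real content is (ii), from which (iii) follows by the symmetry $r \leftrightarrow 1-r$: the term $k_1$ is invariant under this substitution, while $2r/(2-r)$ maps to $2(1-r)/(1+r)$, so (ii) for all $r \in (0,1)$ yields (iii) for all $r \in (0,1)$. To prove (ii) I would apply (B) to get the piecewise lower bound $k_1(r) > (1-r+r^2) + |1-2r|$, which equals $(2-r)(1-r)$ on $(0,1/2]$ and $r(1+r)$ on $[1/2,1)$. On $[1/2,1)$ the claim reduces to $r(1+r) \ge 2r/(2-r)$, i.e. $(1+r)(2-r) \ge 2$, i.e. $r(1-r) \ge 0$, which is immediate. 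On $(0,1/2]$ it reduces, after multiplying by $2-r > 0$, to the polynomial inequality $q(r) := (2-r)^2(1-r) - 2r = -r^3 + 5r^2 - 10r + 4 > 0$.

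I expect this last polynomial check to be the main (though still routine) obstacle. The clean way is to note that $q'(r) = -3r^2 + 10r - 10$ has negative discriminant and negative leading coefficient, so $q$ is strictly decreasing on $\mathbb{R}$; hence on $(0,1/2]$ we have $q(r) \ge q(1/2) = 1/8 > 0$. This finishes (ii), and symmetry finishes (iii). Throughout, the strict inequalities in (ii), (iii), (v), (vi) trace back to the strict bound (B), equivalently $r^2(1-r)^2 > 0$, while the non-strict (i) and (iv) attain equality precisely at $r = 1/2$.
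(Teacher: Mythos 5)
Your proof is correct and rests on exactly the two elementary bounds the paper uses, namely $R \geq r(1-r)$ and $R \geq |1-2r|$ for the radical $R = \sqrt{r^2(1-r)^2+(1-2r)^2}$, so the approach is essentially the same. The one place you take a longer road is (ii): the case split and the cubic check $-r^3+5r^2-10r+4>0$ are avoidable, because $|1-2r|\geq 2r-1$ holds for \emph{every} $r$, so the single chain $k_1(r)\geq (1-r+r^2)+(2r-1)=r(1+r)>2r/(2-r)$ (the last step being $(1+r)(2-r)>2$, i.e.\ $r(1-r)>0$) already covers all of $(0,1)$, which is how the paper disposes of (ii) without touching the piecewise-maximal bound.
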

	\begin{proof}
		It's obvious that
		\[
		k_1(r) \geq 1-r+r^2 + |r-r^2|,
		\]
		so (i) holds.
		Moreover,
		\[
		k_1(r) \geq 1-r+r^2 + |1-2r|.
		\]
		Thus,
		\[
		k_1(r) \geq r^2 + r = r(1+r) > \frac{2r}{2-r},
		\]
		and
		\[
		k_1(r) \geq 2 - 3r + r^2 = (1-r)(2-r) > \frac{2(1-r)}{1+r}.
		\]
		This proves (i)-(iii).
		
		The proofs for (iv)-(vi) are similar.
	\end{proof}

	\begin{lemma} \label{lem:positive}
		Let $A$ be a linear operator on $\MRspace$ with matrix representation
		\[
		A = \Gamma^* \left( \begin{array}{cc}
			d I_0 + a C^2S^{-2} & b CS^{-1} \\
			b CS^{-1} & c I_0
		\end{array} \right)  \Gamma, \quad a,b,c,d \in \mathbb{R}.
		\]
		Then~$A$ is positive semi-definite if each of the following conditions holds:
		\begin{enumerate}
			\item[(i)] $a, c, d \geq 0$;
			\item[(ii)] $ac - b^2 \geq 0$.
		\end{enumerate}
	\end{lemma}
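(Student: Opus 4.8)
The plan is to reduce the assertion to a scalar positive-semidefiniteness condition via the unitarity of $\Gamma$ and a single change of variable. First I would record that, under the standing assumption $\|C\| < 1$, the operators $S^{-1} = (I_0 - C^2)^{-1/2}$ and $S^{-2}$ are bounded, positive, self-adjoint, and commute with $C$ (they are all functions of $C$ in the Borel functional calculus). Consequently every block appearing in the matrix representation of $A$ is a bounded self-adjoint operator on $P_1\MRspace$, and any two of them commute. Since $\Gamma = I_0 \oplus W$ is unitary, $A = \Gamma^* \tilde{A} \Gamma$ is positive semi-definite on $\MRspace$ if and only if the inner operator
\[
\tilde{A} = \left( \begin{array}{cc} dI_0 + aC^2S^{-2} & bCS^{-1} \\ bCS^{-1} & cI_0 \end{array} \right)
\]
is positive semi-definite on $P_1\MRspace \oplus P_1\MRspace$; as $f$ ranges over $\MRspace$, the pair $\Gamma f =: (u,v)$ ranges over all of $P_1\MRspace \oplus P_1\MRspace$. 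So it suffices to show $\langle (u,v), \tilde{A}(u,v) \rangle \geq 0$ for all $u,v \in P_1\MRspace$.

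Next I would expand the quadratic form. Because the off-diagonal block $bCS^{-1}$ is self-adjoint and we work in a real Hilbert space, the two cross terms coincide, giving
\[
\langle (u,v), \tilde{A}(u,v) \rangle = d\|u\|^2 + a\langle u, C^2S^{-2} u \rangle + 2b\langle u, CS^{-1} v \rangle + c\|v\|^2 .
\]
The crucial observation is that $CS^{-1}$ is self-adjoint and commutes with $C$ and $S^{-1}$, so $(CS^{-1})^*(CS^{-1}) = C^2S^{-2}$. Setting $w = CS^{-1} u$, this yields $\langle u, C^2S^{-2} u \rangle = \|w\|^2$ and $\langle u, CS^{-1} v \rangle = \langle w, v \rangle$. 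Discarding the term $d\|u\|^2 \geq 0$ (using $d \geq 0$ from condition (i)), the form is bounded below by $a\|w\|^2 + 2b\langle w, v \rangle + c\|v\|^2$, which now has \emph{scalar} coefficients in the genuine vectors $w, v \in P_1\MRspace$.

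Finally I would close the argument with Cauchy--Schwarz: since $2b\langle w, v \rangle \geq -2|b|\,\|w\|\,\|v\|$, the form dominates $a\|w\|^2 - 2|b|\,\|w\|\,\|v\| + c\|v\|^2$, a quadratic in the nonnegative reals $\|w\|, \|v\|$ that is nonnegative precisely when the $2\times 2$ scalar matrix with entries $a, |b|, c$ is positive semi-definite, i.e.\ when $a, c \geq 0$ and $ac - b^2 \geq 0$ --- exactly hypotheses (i) and (ii). This gives $\langle (u,v), \tilde{A}(u,v)\rangle \geq 0$, hence $\tilde{A} \geq 0$ and therefore $A \geq 0$. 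The only conceptual step is the substitution $w = CS^{-1}u$, which collapses the operator-valued quadratic form into one with scalar coefficients; everything after that is elementary, so I do not anticipate a genuine obstacle beyond stating the functional-calculus facts cleanly.
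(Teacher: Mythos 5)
Your proof is correct and follows essentially the same route as the paper's: both hinge on the off-diagonal block $CS^{-1}$ being self-adjoint with $(CS^{-1})^2 = C^2S^{-2}$, which collapses the operator-valued quadratic form into a scalar one --- the paper completes the square after the Loewner reduction replacing $a$ by $b^2/c$, while you substitute $w = CS^{-1}u$ and apply Cauchy--Schwarz. A minor bonus of your version is that it avoids the paper's separate treatment of the case $c=0$.
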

	\begin{proof}
		If $c = 0$, then $(ii)$ implies that $b = 0$, and the proof is trivial.
		Assume that $c > 0$.
		Then $A \geq B$, where $\geq$ denotes Loewner ordering, and
		\[
		B = \Gamma^* \left( \begin{array}{cc}
			\frac{b^2}{c} C^2S^{-2} & b CS^{-1} \\
			b CS^{-1} & c I_0
		\end{array} \right) \Gamma.
		\]
		Recall that $\Gamma = I_0 \oplus W$ where $W: (I-P_1)\MRspace \to \MRspace$ is unitary.
		For $f_{0} \in P_1 \MRspace$ and $f_{1} \in (I - P_1)\MRspace$,
		\[
		\begin{aligned}
			\langle f_{0} + f_{1}, B (f_{0} + f_{1}) \rangle 
			=&  \frac{b^2}{c} \langle f_{0}, C^2S^{-2} f_{0} \rangle + b \langle f_{0}, CS^{-1} W f_{1} \rangle + b \langle f_{1}, W^* CS^{-1} f_{0} \rangle + c\| f_{1}\|^2 \\
			=& \left\| \frac{b}{\sqrt{c}} CS^{-1} f_{0} + \sqrt{c} W f_{1} \right\|^2 \\
			\geq & 0.
		\end{aligned}
		\]
		Thus, $B$ is positive semi-definite, and so is~$A$.
	\end{proof}

	\begin{lemma} \label{lem:SDSR}
		For $r \in (0,1)$, let $\SD$ and $\SR(r)$ be defined as in~\eqref{eq:SD} and~\eqref{eq:SR}, and let $k_1(r)$ and $k_2(r)$ be defined as in Lemma~\ref{lem:k1k2}.
		Then $\SD  \leq k_1(r) \SR(r)$, and $\SR(r) \leq k_2(r) \SD$, where $\leq$ denotes Loewner ordering.
	\end{lemma}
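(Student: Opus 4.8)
The plan is to apply Lemma~\ref{lem:positive} to the two difference operators $k_1(r)\SR(r) - \SD$ and $k_2(r)\SD - \SR(r)$, each of which inherits exactly the block structure required by that lemma.

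First I would subtract \eqref{eq:SD} from $k_1(r)$ times \eqref{eq:SR}, obtaining
\[
k_1(r)\SR(r) - \SD = \Gamma^* \begin{pmatrix} d I_0 + a\, C^2 S^{-2} & b\, CS^{-1} \\ b\, CS^{-1} & c I_0 \end{pmatrix} \Gamma,
\]
with $a = \tfrac{2 k_1(r)}{r(1-r)} - 4$, $b = \tfrac{2 k_1(r)}{1-r} - 2$, $c = \tfrac{(1+r) k_1(r)}{1-r} - 2$, and $d = \tfrac{(2-r) k_1(r)}{r} - 2$. Condition~(i) of Lemma~\ref{lem:positive}, namely $a,c,d \ge 0$, follows at once from Lemma~\ref{lem:k1k2}: parts~(ii) and~(iii) give $d>0$ and $c>0$, while part~(i) together with $2r(1-r)\le 1/2$ gives $a>0$.

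The crux is condition~(ii), $ac - b^2 \ge 0$. Expanding, I expect $ac - b^2$ to be a positive multiple of $Q(k_1(r))$, where $Q(k) = k^2 - 2(1-r+r^2)k + 2r(1-r)$. Writing $k_1(r) = \alpha + \beta$ with $\alpha = 1-r+r^2$ and $\beta = \sqrt{r^2(1-r)^2 + (1-2r)^2}$, I would compute $Q(\alpha+\beta) = \beta^2 - \alpha^2 + 2r(1-r)$ and then verify the elementary identity $\beta^2 - \alpha^2 = -2r(1-r)$ by direct expansion. This gives $Q(k_1(r)) = 0$, hence $ac - b^2 = 0$, and Lemma~\ref{lem:positive} delivers $k_1(r)\SR(r) - \SD \ge 0$.

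For the second inequality I would repeat the computation with $k_2(r)\SD - \SR(r)$, whose block entries are $a = 4k_2(r) - \tfrac{2}{r(1-r)}$, $b = 2k_2(r) - \tfrac{2}{1-r}$, $c = 2k_2(r) - \tfrac{1+r}{1-r}$, and $d = 2k_2(r) - \tfrac{2-r}{r}$. Here the identity $k_2(r) = k_1(r)/[2r(1-r)]$ is convenient: it rewrites $a$ as $2[k_1(r)-1]/[r(1-r)] \ge 0$ via Lemma~\ref{lem:k1k2}(i), while parts~(v) and~(vi) give $d>0$ and $c>0$. Condition~(ii) again reduces to a quadratic in $k_2(r)$ with positive leading coefficient that vanishes at the chosen value, once more by way of $\beta^2 - \alpha^2 = -2r(1-r)$. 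The only real obstacle is this bookkeeping: recognizing that $k_1(r)$ and $k_2(r)$ are precisely the values forcing the discriminant-type quantity $ac - b^2$ to vanish, after which positive semidefiniteness is immediate from Lemma~\ref{lem:positive}.
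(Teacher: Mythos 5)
Your proposal is correct and follows essentially the same route as the paper: write each difference operator in the block form of Lemma~\ref{lem:positive}, check the sign conditions via Lemma~\ref{lem:k1k2}, and verify that $ac-b^2$ is a positive multiple of a quadratic that vanishes exactly at $k_1(r)$ (respectively $k_2(r)$). The paper only carries out the first inequality explicitly and declares the second similar, whereas you sketch both, including the convenient identity $k_2(r)=k_1(r)/[2r(1-r)]$; the computations you outline all check out.
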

	
	\begin{proof}
		I will establish $\SD \leq k_1(r) \SR(r)$.
		The other inequality can be proved in a similar way.
		\[
		k_1(r)\SR(r) - \SD = \Gamma^* \left( \begin{array}{cc}
			d(r) I_0 + a(r) C^2S^{-2} & b(r) CS^{-1} \\
			b(r) CS^{-1} & c(r) I_0
		\end{array} \right) \Gamma,
		\]
		where
		\[
		\begin{aligned}
			a(r) &= \frac{2}{r(1-r)} k_1(r) - 4, \\
			b(r) &= \frac{2}{1-r} k_1(r) - 2, \\
			c(r) &= \frac{1+r}{1-r} k_1(r) - 2, \\
			d(r) &= \frac{2-r}{r} k_1(r) - 2.
		\end{aligned}
		\]
		By Lemma~\ref{lem:k1k2}, $a(r), c(r), d(r) \geq 0$.
		Straightforward calculations reveal that
		\begin{equation} \nonumber
			a(r)c(r) - b(r)^2 = \frac{2}{r(1-r)}k_1(r)^2 - \frac{4(r^2-r+1)}{r(1-r)} k_1(r) + 4 = 0.
		\end{equation}
		It follows from Lemma~\ref{lem:positive} that $\SD \leq k_1(r) \SR(r)$.
	\end{proof}
	
	The following result can be proved in a similar fashion.
	\begin{lemma} \label{lem:SM}
	Let $\SD$, $\SR(r)$, $\SM(\ell)$ be defined as in~\eqref{eq:SD},~\eqref{eq:SR}, and~\eqref{eq:SM}.
	Then, for $\ell \geq 1$, 
	\[
	\SM(\ell) \leq \SR \left( \frac{1}{\ell+1} \right) \leq \left\{ \frac{2\ell+1}{\ell+1} ,\frac{\ell+1}{\ell} \right\} \SM(\ell),
	\]
	and 
	\[
	\begin{aligned}
		\SM(\ell) &\leq \frac{\ell^2+\ell+2+(\ell-1) \sqrt{(\ell+1)^2+1}}{2(\ell+1)} \SD, \\
		\SD &\leq \frac{\ell^2+\ell+2+(\ell-1)\sqrt{(\ell+1)^2+1}}{(\ell+1)^2} \SM(\ell).
	\end{aligned}
	\]
	\end{lemma}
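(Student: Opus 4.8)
The plan is to follow the template of Lemma~\ref{lem:SDSR} verbatim: every claimed inequality $X \leq \kappa\, Y$ between two of the operators $\SD$, $\SR(r)$, $\SM(l)$ is equivalent to the positive semi-definiteness of the difference $\kappa\, Y - X$, and by the block representations~\eqref{eq:SD},~\eqref{eq:SR},~\eqref{eq:SM} each such difference is again of the exact form
\[
\Gamma^* \left( \begin{array}{cc} d I_0 + a C^2 S^{-2} & b C S^{-1} \\ b C S^{-1} & c I_0 \end{array} \right) \Gamma
\]
treated by Lemma~\ref{lem:positive}. So the whole proof reduces to computing the four scalar coefficients $a,b,c,d$ (depending on $l$ and the scaling constant) and checking the two conditions $a,c,d \geq 0$ and $ac - b^2 \geq 0$.

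For the pair $\SM(l)$ and $\SR(1/(l+1))$, I would first substitute $r = 1/(l+1)$ into~\eqref{eq:SR}, which turns its entries into rational functions of $l$ (e.g. $(2-r)/r = 2l+1$ and $(1+r)/(1-r) = (l+2)/l$). For the lower bound I then read off the coefficients of $\SR(1/(l+1)) - \SM(l)$, namely $d = l$, $a = 2(l+1)/l$, $b = 2/l$, $c = 2/[l(l+1)]$; these are nonnegative for $l \geq 1$, and one checks directly that $ac - b^2 = 0$, so Lemma~\ref{lem:positive} gives $\SM(l) \leq \SR(1/(l+1))$. For the upper bound $\SR(1/(l+1)) \leq K\,\SM(l)$ with $K = \max\{(2l+1)/(l+1),\,(l+1)/l\}$, I compute the coefficients of $K\,\SM(l) - \SR(1/(l+1))$, observe that condition~(i) holds precisely because $K$ dominates the thresholds $(2l+1)/(l+1)$ and $(l+1)/l$, and then exhibit the factorization
\[
ac - b^2 = 2(l+1)\,(K-1)\left(K - \tfrac{l+1}{l}\right),
\]
which is nonnegative since $K \geq (l+1)/l > 1$. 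This reproduces the constant used in Corollary~\ref{cor:VMVR-1}.

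For the pair $\SM(l)$ and $\SD$ the structure is the same, but the scaling constant is now irrational, and this is where the real work lies. Writing $\mu$ for the multiplier in $\SM(l) \leq \mu\,\SD$, the coefficients of $\mu\SD - \SM(l)$ are $d = 2\mu - (l+1)$, $a = 4\mu - 2(l+1)$, $b = 2\mu - 2$, $c = 2\mu - (l+3)/(l+1)$, and a short computation collapses $ac - b^2$ to the quadratic
\[
ac - b^2 = 4\mu^2 - \frac{4(l^2+l+2)}{l+1}\,\mu + 2(l+1).
\]
I then take $\mu$ to be the larger root of this quadratic; its discriminant equals $(l-1)^2[(l+1)^2+1]/(l+1)^2$, whose square root produces exactly the term $(l-1)\sqrt{(l+1)^2+1}$ in the stated constant, so that the larger root is precisely $\mu = [l^2+l+2+(l-1)\sqrt{(l+1)^2+1}]/[2(l+1)]$. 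The one subtlety is condition~(i): one checks $\mu - (l+1)/2 = (l-1)(\sqrt{(l+1)^2+1}-1)/[2(l+1)] \geq 0$, which forces $a,d \geq 0$ (and $c \geq 0$ is weaker), so it is the \emph{larger} root---not the smaller one, which violates~(i) for $l > 1$---that must be taken. The reverse bound $\SD \leq \nu\,\SM(l)$ with $\nu = 2\mu/(l+1)$ is handled by the identical root-finding computation applied to $\nu\SM(l) - \SD$.

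The hard part is purely the algebra of the last paragraph: recognizing that the irrational constant is the larger root of an explicit quadratic whose discriminant factors as $(l-1)^2[(l+1)^2+1]/(l+1)^2$, and confirming that condition~(i) of Lemma~\ref{lem:positive} selects exactly that root. Once the quadratic and its discriminant are identified, every inequality follows immediately from Lemma~\ref{lem:positive} with no further estimation.
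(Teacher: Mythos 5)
Your proposal is correct and follows exactly the route the paper intends: the paper omits the proof of this lemma, stating only that it "can be proved in a similar fashion" to Lemma~\ref{lem:SDSR}, i.e., by writing each difference in the block form of Lemma~\ref{lem:positive} and checking the sign conditions on $a,c,d$ and $ac-b^2$. Your explicit coefficients, the factorization $ac-b^2=2(l+1)(K-1)(K-\tfrac{l+1}{l})$, the quadratic $4\mu^2-\tfrac{4(l^2+l+2)}{l+1}\mu+2(l+1)$ with discriminant factor $(l-1)^2[(l+1)^2+1]$, and the observation that condition~(i) selects the larger root all check out, so you have supplied precisely the details the paper leaves to the reader.
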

	
	In Section~\ref{sec:main} it was claimed that the bounds in Proposition~\ref{pro:VDVR} are sharp.
	The following proposition makes this precise.
	\begin{proposition} \label{pro:bound-sharp}
		Suppose that $M_{00} = \{0\}$ and $\MRspace \neq \{0\}$.
		Let $r \in (0,1)$ be given, and let $k_1(r)$, $k_2(r)$ be defined as in Lemma~\ref{lem:k1k2}.
		Then, for any $\eta < k_1(r)$, if $\|C\|$ is smaller than but sufficiently close to~1, one can find a non-zero $f \in L_0^2(\pi)$ such that
		\[
		\VD(f) > \eta \VR(f,r).
		\]
		Moreover, for any $\eta < k_2(r)$, if $\|C\|$ is smaller than but sufficiently close to~1, one can find a non-zero $f \in L_0^2(\pi)$ such that
		\[
		\VR(f,r) > \eta \VD(f).
		\]
	\end{proposition}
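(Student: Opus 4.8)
The plan is to exhibit, for each target ratio $\eta$ below $k_1(r)$ (respectively $k_2(r)$), a test function that nearly saturates the Loewner bound of Lemma~\ref{lem:SDSR}. First I would restrict attention to functions $f = f_{0} + f_{1} \in \MRspace$, setting the components $f_{01}, f_{10}, f_{11}$ to zero. For such~$f$, Propositions~\ref{pro:VD} and~\ref{pro:VR} reduce the asymptotic variances to the pure quadratic forms $\VD(f) = \langle f, \SD f \rangle$ and $\VR(f,r) = \langle f, \SR(r) f \rangle$, so that $\sup_{0 \neq f \in \MRspace} \VD(f)/\VR(f,r)$ is exactly the supremum of the generalized Rayleigh quotient of $\SD$ against $\SR(r)$. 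Lemma~\ref{lem:SDSR} already caps this supremum at $k_1(r)$; sharpness amounts to showing that $k_1(r)\SR(r) - \SD$, though positive semi-definite, fails to be bounded below by any positive multiple of $\SR(r)$ once $\|C\|$ is close to~$1$, i.e.\ that it carries spectral values arbitrarily close to~$0$.

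The key observation is that the degeneracy detected in the proof of Lemma~\ref{lem:SDSR} --- the vanishing of $a(r)c(r)-b(r)^2$ --- produces exactly this near-singular behavior. Writing $k_1(r)\SR(r)-\SD = \Gamma^* M \Gamma$ with the block operator $M$ of Lemma~\ref{lem:positive}, I would evaluate $M$ at a spectral value $c_0$ of~$C$ (with $s_0 = \sqrt{1-c_0^2}$), obtaining the $2\times2$ scalar matrix
\[
M(c_0) = \left( \begin{array}{cc} d(r) + a(r) c_0^2 s_0^{-2} & b(r) c_0 s_0^{-1} \\ b(r) c_0 s_0^{-1} & c(r) \end{array} \right).
\]
Its determinant is $d(r)c(r) + [a(r)c(r)-b(r)^2]c_0^2 s_0^{-2} = d(r)c(r)$, a positive constant by Lemma~\ref{lem:k1k2} (which yields $c(r),d(r) > 0$), whereas its trace $d(r)+c(r)+a(r)c_0^2 s_0^{-2}$ diverges as $c_0 \to 1$, since $a(r) > 0$ (again by Lemma~\ref{lem:k1k2}, because $k_1(r) \geq 1 > 2r(1-r)$). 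Hence the smaller eigenvalue of $M(c_0)$, equal to $d(r)c(r)$ divided by the larger one, tends to~$0$ while staying positive. Its eigenvector, which tends to the pure $(I-P_1)\MRspace$-direction but with a correction of order $s_0$ in the $P_1\MRspace$-direction, prescribes the mixing ratio of the optimal test function; the correction matters precisely because the entries scaling like $c_0 s_0^{-1}$ and $c_0^2 s_0^{-2}$ blow up, shifting the limiting quotient from $2(1-r)/(1+r)$ up to $k_1(r)$.

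To turn this into genuine functions I would mimic the spectral argument from the proof of Lemma~\ref{lem:At}: using the projection-valued measure $E_C$ of~$C$, pick a unit vector $e$ in the range of $E_C((\|C\|-\varepsilon,\|C\|])$, on which $C$ acts as multiplication by a scalar within $\varepsilon$ of $\|C\|$, and set $f = x\, e + W^* e$, where $(x,1)$ is the small-eigenvalue eigenvector of $M(\|C\|)$. On this nearly one-dimensional band the operator identities collapse to the scalar computation above, so $\langle f,[k_1(r)\SR(r)-\SD]f\rangle$ becomes negligible while $\langle f,\SR(r)f\rangle$ stays bounded away from~$0$; consequently $\VD(f)/\VR(f,r) > \eta$ as soon as $\|C\|$, and hence the attainable $c_0$, is close enough to~$1$. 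The second inequality, with $k_2(r)$, follows by the identical argument applied to $k_2(r)\SD - \SR(r)$, whose analogous block matrix is again degenerate by Lemma~\ref{lem:SDSR}.

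The main obstacle is the rigorous passage from the scalar $2\times2$ picture to the operator level: because $C$ may have purely continuous spectrum near $\|C\|$, one must control the $\varepsilon$-errors so that the cross terms involving $CS^{-1}$ and $C^2S^{-2}$ --- whose contributions are of order $s_0^{-1}$ and $s_0^{-2}$ --- are approximated uniformly over the chosen spectral band, guaranteeing that the delicate cancellation producing the limit $k_1(r)$ survives the approximation rather than collapsing to the sub-extremal value obtained from a pure $f_{1}$.
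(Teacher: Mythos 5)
Your argument is correct and rests on the same mechanism as the paper's own proof: the degeneracy $a(r)c(r)-b(r)^2=0$ at $k=k_1(r)$, the blow-up of $\|C^2S^{-2}\|$ as $\|C\|\to 1$, and a test function concentrated on a spectral band of $C$ near $\|C\|$ whose $(I-P_1)\MRspace$-component is tied to its $P_1\MRspace$-component through $W^*$. The paper organizes the same computation slightly differently---it first perturbs $k_1(r)$ down to $\eta$ so that the block ``determinant'' $\alpha(\eta)\gamma(\eta)-\beta(\eta)^2$ becomes strictly negative, then completes the square and chooses $f_1=-\beta(\eta)\gamma(\eta)^{-1}W^*CS^{-1}f_0$ to annihilate the square, which produces a strictly negative quadratic form directly and sidesteps the Rayleigh-quotient limiting argument and the uniform spectral-band error control you flag at the end.
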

	\begin{proof}
		I'll prove the first assertion.
		The proof of the second is similar.
		
		Let $\eta < k_1(r)$ be arbitrary.
		For $k > 0$, define
		\[
		\begin{aligned}
			\alpha(k) &= \frac{2}{r(1-r)} k - 4, \\
			\beta(k) &= \frac{2}{1-r} k - 2, \\
			\gamma(k) &= \frac{1+r}{1-r} k - 2.
		\end{aligned}
		\] 
		Then
		\[
		\alpha(k)\gamma(k) - \beta(k)^2 = \frac{2}{r(1-r)} k^2 - \frac{4(r^2-r+1)}{r(1-r)} k + 4.
		\]
		$k=k_1(r)$ is the larger root of the equation $\alpha(k)\gamma(k) - \beta(k)^2 = 0$, which always has two roots.
		Moreover, by Lemma~\ref{lem:k1k2}, $\gamma(k_1(r)) > 0$.
		Without loss of generality, assume that~$\eta$ is sufficiently close to $k_1(r)$ so that
		\[
		\alpha(\eta)\gamma(\eta) - \beta(\eta)^2 < 0, \quad \gamma(\eta) > 0.
		\]
		One can show that
		\begin{equation} \label{eq:ellS}
			\begin{aligned}
				&\eta \SR(r) - \SD \\&= \Gamma^* \left( \begin{array}{cc}
					\frac{\beta(\eta)^2}{\gamma(\eta)} C^2S^{-2} & \beta(\eta) CS^{-1} \\
					\beta(\eta) CS^{-1} & \gamma(\eta) I_0
				\end{array} \right) \Gamma + \Gamma^* \left( \begin{array}{cc}
					\left(\frac{2-r}{r} \eta - 2 \right) I_0 +  \left[ \alpha(\eta) - \frac{\beta(\eta)^2}{\gamma(\eta)} \right] C^2S^{-2} & 0 \\
					0 & 0
				\end{array} \right) \Gamma.
			\end{aligned}
		\end{equation}
		Basic properties regarding continuous functions of self-adjoint operators imply that
		\[
		\|C^2S^{-2}\| = \|C^2(I_0-C^2)^{-1}\| = \|C\|^2/(1-\|C\|^2).
		\]
		Thus as $\|C\| \to 1$ from below, $\|C^2S^{-2}\| \to \infty$.
		Moreover, $C^2S^{-2}$ is positive semi-definite.
		When $\|C\|$ is sufficiently close to~1, one can find a non-constant function $f_{0} \in P_1\MRspace$ such that
		\[
		\left(\frac{2-r}{r} \eta - 2 \right) \|f_{0}\|^2 +  \left[ \alpha(\eta) - \frac{\beta(\eta)^2}{\gamma(\eta)} \right] \langle f_{0}, C^2S^{-2} f_{0} \rangle < 0.
		\]
		Let $f = f_{0} - \beta(\eta)\gamma(\eta)^{-1} W^* CS^{-1} f_{0}$, where $W^*$ is the lower right block of $\Gamma^*$.
		Then by Propositions~\ref{pro:VD} and~\ref{pro:VR} along with~\eqref{eq:ellS},
		\[
		\begin{aligned}
			\eta \VR(f,r) - \VD(f) &= \langle f, [\eta \SR(r) - \SD ] f \rangle \\
			&= \left\langle f_{0}, \left[\frac{2-r}{r} \eta - 2 \right] f_{0} +  \left[ \alpha(\eta) - \frac{\beta(\eta)^2}{\gamma(\eta)} \right] C^2S^{-2} f_{0} \right\rangle \\
			& < 0.
		\end{aligned}
		\]
	\end{proof}
	
	\section{Proof of Proposition~\ref{pro:rate}} \label{app:rate}

	Let $\mu \in L_*^2(\pi)$ be such that $\mu \neq \pi$, and set $g = \df \mu/\df \pi - 1 \in L_0^2(\pi)$.
	Then, by~\eqref{eq:l2dist},
	\[
	\begin{aligned}
		\limsup_{t \to \infty} t^{-1} \log \|\mu K_t - \pi\|_* &= \log \left( \limsup_{t \to \infty} \|g\|^{1/t} \langle g/\|g\|, K_t K_t^* g/\|g\| \rangle^{1/(2t)} \right) \\
		&= \log \left( \limsup_{t \to \infty} \langle g/\|g\|, K_t K_t^* g/\|g\| \rangle^{1/(2t)} \right).
	\end{aligned}
	\]
	It follows that
	\[
	\sup_{\mu \in L_*^2(\pi),\; \mu \neq \pi} \limsup_{t \to \infty} t^{-1} \log \|\mu K_t - \pi\|_* \leq  \log \left( \sup_{f \in L_0^2(\pi), \; \|f\|=1} \limsup_{t \to \infty} \langle f, K_t K_t^* f \rangle^{1/(2t)} \right).
	\]
	
	To prove the reverse inequality, let $f \in L_0^2(\pi)$ be such that $\|f\|=1$, and decompose it into positive and negative parts: $f = f_+ - f_-$.
	Then
	\[
	c := \pi f_+ = \pi f_- > 0.
	\]
	Let $\mu_+$ and $\mu_-$ be probability measures such that $\df \mu_+/\df \pi = c^{-1} f_+$ and $\df \mu_-/\df \pi = c^{-1} f_-$.
	Then $\mu_+, \mu_- \in L_*^2(\pi)$, and neither of the two is equal to~$\pi$.
	By~\eqref{eq:l2dist},
	\[
	\begin{aligned}
		\langle f, K_tK_t^* f \rangle^{1/2} &= c\|K_t^* [(c^{-1}f_+-1) - (c^{-1}f_--1)] \| \\
		&\leq c\| K_t^* (c^{-1}f_+ - 1)\| + c\|K_t^*(c^{-1}f_- - 1)\| \\
		&= c\|\mu_+ K_t - \pi\|_* + c\|\mu_- K_t - \pi\|_*.
	\end{aligned}
	\]
	For $a,b > 0$, $\log (a+b) \leq \log 2 + \max\{\log a, \, \log b\}$.
	It follows that 
	\[
	\log \left( \langle f, K_tK_t^* f \rangle^{1/(2t)} \right) \leq t^{-1} \log (2c) +  \max \left\{ t^{-1} \log \|\mu_+ K_t - \pi\|_*,\; t^{-1} \log \|\mu_- K_t - \pi\|_*  \right\}
	\]
	For two sequences of real numbers $(a_t)$ and $(b_t)$, 
	\[
	\limsup_{t \to \infty} \max\{a_t,\, b_t\} = \max\left\{ \limsup_{t \to \infty} a_t,\, \limsup_{t \to \infty} b_t \right\}.
	\]
	Thus,
	\[
	\begin{aligned}
		\log \left( \limsup_{t \to \infty} \langle f, K_t K_t^* f \rangle^{1/(2t)} \right) &= \limsup_{t \to \infty} \log \left( \langle f, K_t K_t^* f \rangle^{1/(2t)} \right) \\
		&\leq \sup_{\mu \in L_*^2(\pi),\; \mu \neq \pi} \limsup_{t \to \infty} t^{-1} \log \|\mu K_t - \pi\|_*.
	\end{aligned}
	\]
	Taking supremum with respect to $f$ shows that
	\[
	\log \left( \sup_{f \in L_0^2(\pi), \; \|f\|=1} \limsup_{t \to \infty} \langle f, K_t K_t^* f \rangle^{1/(2t)} \right) \leq \sup_{\mu \in L_*^2(\pi),\; \mu \neq \pi} \limsup_{t \to \infty} t^{-1} \log \|\mu K_t - \pi\|_*.
	\]

	\bigskip
	
	{\bf Acknowledgments.}
		The author was partially supported by NSF Grant DMS-2112887.
		The author thanks an Editor, an anonymous Associate Editor, and two anonymous referees for their valuable feedback.
		In particular, the referees suggested studying the modified DG sampler and its parallelization.
		The author would like to thank Riddhiman Bhattacharya, Austin Brown, James P. Hobert, Galin L. Jones, and Haoxiang Li for their helpful comments.


	\bibliographystyle{ims}
	\bibliography{qinbib}

	\newpage
	
	
	\setcounter{section}{18}
	
	\section{Supplement}
	
	\subsection{Proof of Lemma~\ref{lem:structure}}

	Recall that the lemma states that for $r \in (0,1)$,
	\[
	(1-r) P_1|_{\MRspace} + rP_2|_{\MRspace} = \Gamma^* U \left( \begin{array}{cc}
		[I_0+\sqrt{\Delta(C)}]/2 & 0 \\
		0 & [I_0-\sqrt{\Delta(C)}]/2
	\end{array} \right) U^* \Gamma,
	\]
	where $U: \MRspace \to \MRspace$ is unitary, and
	\[
	\Delta(C) = (1-2r)^2 I_0 + 4r(1-r) C^2.
	\]
	
	Consider first the case $r=1/2$.
	When this is the case, 
	\[
	(1-r) P_1|_{\MRspace} + rP_2|_{\MRspace} = \Gamma^* \left( \begin{array}{cc}
		I_0/2+C^2/2 & CS/2 \\
		CS/2 & S^2/2
	\end{array} \right) \Gamma,
	\]
	$\Delta(C) = C^2$, and the result holds with
	\[
	U =  \left( \begin{array}{cc}
		\sqrt{(I_0+C)/2} & \sqrt{(I_0-C)/2} \\
		\sqrt{(I_0-C)/2} & - \sqrt{(I_0+C)/2}
	\end{array} \right).
	\]
	
	Assume that $r \neq 1/2$.
	For $x \in (0,1)$, define
	\[
	\Delta(x) = (1-2r)^2 + 4r(1-r)x^2.
	\]
	Also, for $x \in (0,1)$, let
	\[
	\begin{aligned}
		h_{00}(x) &= \frac{[1+\sqrt{\Delta(x)}]x}{\sqrt{ 4(1-r)[r+\sqrt{\Delta(x)}]x^2 + [2r-1+\sqrt{\Delta(x)}]^2} }, \\
		h_{10}(x) &= \frac{[2r-1+\sqrt{\Delta(x)}] \sqrt{1-x^2}}{\sqrt{ 4(1-r)[r+\sqrt{\Delta(x)}]x^2 + [2r-1+\sqrt{\Delta(x)}]^2} }, \\
		h_{01}(x) &= \frac{[1-\sqrt{\Delta(x)}]x}{\sqrt{ 4(1-r) [r-\sqrt{\Delta(x)}] x^2 + [2r-1-\sqrt{\Delta(x)}]^2 }}, \\
		h_{11}(x) &= \frac{ [2r-1-\sqrt{\Delta(x)}] \sqrt{1-x^2}} {\sqrt{ 4(1-r) [r-\sqrt{\Delta(x)}] x^2 + [2r-1-\sqrt{\Delta(x)}]^2 }}.
	\end{aligned}
	\]
	When $r<1/2$, let $h_{00}(0) = 1$, $h_{10}(0) = 0$, $h_{01}(0)=0$, $h_{11}(0)=-1$, and let $h_{00}(1) = 1$, $h_{10}(1) = 0$, $h_{01}(1)=0$, $h_{11}(1) = -1$.
	When $r>1/2$, let $h_{00}(0) = 0$, $h_{10}(0) = 1$, $h_{01}(0)=1$, $h_{11}(0)=0$, and let $h_{00}(1) = 1$, $h_{10}(1) = 0$, $h_{01}(1) = 0$, $h_{11}(1) = -1$.
	Then, for $i \in \{0,1\}$, $h_{ij}$ is continuous on $[0,1]$.
	For $r \in (0,1)$ and $x \in [0,1]$, the matrix
	\[
	u(x) = \left( \begin{array}{cc}
		h_{00}(x) & h_{01}(x) \\
		h_{10}(x) & h_{11}(x)
	\end{array} \right)
	\]
	is unitary since $u(x)^{\top} u(x) = I$.
	Moreover,
	\[
	\left( \begin{array}{cc}
		1-r + rx^2 & rx\sqrt{1-x^2} \\
		rx\sqrt{1-x^2} & r(1-x^2)
	\end{array} \right) u(x) = u(x) \left(\begin{array}{cc}
		[1+\sqrt{\Delta(x)}]/2 & 0 \\
		0 & [1-\sqrt{\Delta(x)}]/2
	\end{array} \right),
	\]
	so
	\[
	\left( \begin{array}{cc}
		1-r + rx^2 & rx\sqrt{1-x^2} \\
		rx\sqrt{1-x^2} & r(1-x^2)
	\end{array} \right) = u(x) \left(\begin{array}{cc}
		[1+\sqrt{\Delta(x)}]/2 & 0 \\
		0 & [1-\sqrt{\Delta(x)}]/2
	\end{array} \right) u(x)^{\top}.
	\]
	
	The spectrum of~$C$ is a subset of $[0,1]$.
	Using polynomial approximation \citep[see, e.g.,][\S 32]{helmberg2014introduction}, one can define $h_{ij}(C)$ for $i,j \in \{1,2\}$.
	Let
	\[
	U = \left( \begin{array}{cc}
		h_{00}(C) & h_{01}(C) \\
		h_{10}(C) & h_{11}(C)
	\end{array} \right).
	\]
	For any two real continuous functions $h_1$ and $h_2$ on $[0,1]$, it holds that $h_1(C) + h_2(C) = (h_1+h_2)(C)$, and $h_1(C) h_2(C) = (h_1h_2)(C)$.
	By previous calculations, one can then conclude that~$U$ is unitary, and that Lemma~\ref{lem:structure} holds.

	\subsection{Another Variant of Two-component Gibbs}
	
	One can use the theory of two projections to analyze other variants of two-component Gibbs samplers.
	As a demonstration, consider the following random sequence scan Gibbs sampler.
	
	\bigskip
	
	\begin{algorithm}[H]
		\caption{Random sequence scan Gibbs sampler}
		Draw $\tilde{X}_0 = (X_{1,0},X_{2,0})$ from some initial distribution on $(\X, \B)$, and
		set $t=0$ \;
		
		\While{$t < T$}{
			draw $W$ from a $\mbox{Bernoulli}(1/2)$ distribution\;
			\If{$W=0$}{
				draw $X_{2,t+1}$ from $\pi_1(\cdot\mid X_{1,t})$, then draw $X_{1,t+1}$ from $\pi_2(\cdot\mid X_{2,t+1})$;
			}
			\If{$W=1$}{
				Draw $X_{1,t+1}$ from $\pi_2(\cdot\mid X_{2,t})$, then draw $X_{2,t+1}$ from $\pi_1(\cdot\mid X_{1,t+1})$;
			}
			set $\tilde{X}_{t+1} = (X_{1,t+1},X_{2,t+1})$\;
			set $t=t+1$\;
		}
	\end{algorithm}
	
	\bigskip
	
	The underlying Markov chain $\tilde{X}_t$ is time-homogeneous, and the associated $t$-step Mtk is
	\[
	\PS_t = (P_1P_2/2 + P_2 P_1/2)^t.
	\]
	It is clear that $\pi \PS_t = \pi$, so the chain has~$\pi$ as a stationary distribution.
	Assume that $M_{00} = \{0\}$ and that $\MRspace \neq \{0\}$.
	Then, for $f \in L_0^2(\pi)$ with orthogonal decomposition $f=\sum_{i=0}^1\sum_{j=0}^1 f_{ij} + f_{0} + f_{1}$ as in~\eqref{eq:fdecomp},
	\begin{equation} \label{eq:Sf}
		\PS_1 f = (P_1|_{\MRspace} P_2|_{\MRspace} / 2 + P_2|_{\MRspace} P_1|_{\MRspace} / 2) (f_{0} + f_{1}).
	\end{equation}
	By Lemma~\ref{lem:halmos}, the following matrix representation holds:
	\begin{equation} \label{eq:HMatrix}
		P_1|_{\MRspace} P_2|_{\MRspace} / 2 + P_2|_{\MRspace} P_1|_{\MRspace} / 2 = \Gamma^* \left( \begin{array}{cc}
			C^2 & CS/2 \\
			CS/2 & 0
		\end{array} \right) \Gamma.
	\end{equation}

	For $f \in L_0^2(\pi)$ defined above, the associated asymptotic variance is 
	\[
	\VS(f) = E[f(\tilde{X}_0)^2] + 2 \sum_{t=1}^{\infty} E[f(\tilde{X}_0)  f(\tilde{X}_t)],
	\]
	where $\tilde{X}_0 \sim \pi$ \citep[see, e.g.,][]{jones2004markov}.
	
	\begin{proposition} \label{pro:VS}
 		Assume that $\|C\| < 1$.
		Then
		\[
		\VS(f) = \|f_{01}\|^2 + \|f_{10}\|^2 + \|f_{11}\|^2 +  \langle f_{0} + f_{1}, \SigS (f_{0} + f_{1}) \rangle,
		\]
		where
		\[
		\SigS= \Gamma^*  A_{\scriptsize\mbox{S}} B_{\scriptsize\mbox{S}} \Gamma,
		\]
		\[
		A_{\scriptsize\mbox{S}} = \left( \begin{array}{cc}
			(I_0-C^2/4)^{-1} & 0 \\
			0 & (I_0-C^2/4)^{-1} 
		\end{array} \right), \quad B_{\scriptsize\mbox{S}} = \left( \begin{array}{cc}
			I_0 + 2C^2S^{-2} + C^2/4 &  CS^{-1} \\
			CS^{-1} & I_0 + C^2/4
		\end{array} \right).
		\]
	\end{proposition}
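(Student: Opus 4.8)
The proof parallels those of Propositions~\ref{pro:VD} and~\ref{pro:VR}. I would begin from the given formula $\VS(f) = \|f\|^2 + 2\sum_{t=1}^{\infty} \langle f, \PS_t f\rangle$ and exploit the orthogonal decomposition~\eqref{eq:fdecomp}. Since $P_1$ and $P_2$ act as a projection/annihilation pair on each of $M_{01}$, $M_{10}$, $M_{11}$, the operator $\PS_1 = P_1P_2/2 + P_2P_1/2$ annihilates $f_{01}$, $f_{10}$, and $f_{11}$; this is precisely the content of~\eqref{eq:Sf}. Because $\MRspace$ is $\PS_1$-invariant, it follows that for every $t \geq 1$ we have $\PS_t f = (\PS_1|_{\MRspace})^t (f_{0} + f_{1}) \in \MRspace$, and hence, by orthogonality of the summands, $\langle f, \PS_t f \rangle = \langle f_{0} + f_{1}, (\PS_1|_{\MRspace})^t (f_{0} + f_{1}) \rangle$. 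Combined with the Pythagorean identity $\|f\|^2 = \|f_{01}\|^2 + \|f_{10}\|^2 + \|f_{11}\|^2 + \|f_{0} + f_{1}\|^2$, this reduces the claim to identifying the operator $\Sigma := I + 2\sum_{t=1}^{\infty} (\PS_1|_{\MRspace})^t$ on $\MRspace$ with $\SigS$.

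The next step is to establish convergence of the Neumann series and sum it. Using the matrix representation~\eqref{eq:HMatrix} and the spectral theorem for the positive contraction~$C$, I would note that for a spectral value $c$ of~$C$ with $s = \sqrt{1-c^2}$, the $2\times 2$ symbol $\left( \begin{smallmatrix} c^2 & cs/2 \\ cs/2 & 0 \end{smallmatrix} \right)$ has eigenvalues $c(c+1)/2$ and $c(c-1)/2$ (using $c^2 + s^2 = 1$). Since $\PS_1|_{\MRspace}$ is self-adjoint, its operator norm equals its spectral radius, which works out to $\|C\|(\|C\|+1)/2$; this is strictly less than~$1$ exactly when $\|C\| < 1$. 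Convergence of the series is thereby secured, and $\Sigma = 2(I - \PS_1|_{\MRspace})^{-1} - I$.

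Finally I would invert $I - \PS_1|_{\MRspace}$, whose symbol is $\left( \begin{smallmatrix} S^2 & -CS/2 \\ -CS/2 & I_0 \end{smallmatrix} \right)$. As all blocks are functions of~$C$ and hence commute, the $2 \times 2$ cofactor formula applies with block ``determinant'' $S^2(I_0 - C^2/4)$, exactly as in the inversion step of Proposition~\ref{pro:VR}. Substituting into $\Sigma = 2(I - \PS_1|_{\MRspace})^{-1} - I$ and simplifying with $I_0 - C^2 = S^2$ yields the factored form $\SigS = \Gamma^* A_{\scriptsize\mbox{S}} B_{\scriptsize\mbox{S}} \Gamma$. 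The only genuinely delicate point is the spectral computation giving $\|\PS_1|_{\MRspace}\| = \|C\|(\|C\|+1)/2 < 1$; once that norm bound is in hand, the remainder is routine commuting-block algebra. It is worth recording that $\PS_1$ is self-adjoint, since $(P_1P_2)^* = P_2P_1$, so that the reversible-chain asymptotic-variance formula quoted for $\VS(f)$ indeed applies.
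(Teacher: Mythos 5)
Your proposal is correct and follows essentially the same route as the paper's proof: reduce via~\eqref{eq:Sf} to the operator $\Sigma = I + 2\sum_{t=1}^{\infty}(\PS_1|_{\MRspace})^t$, sum the Neumann series as $2(I-\PS_1|_{\MRspace})^{-1}-I$, and invert the symbol $\left(\begin{smallmatrix} S^2 & -CS/2 \\ -CS/2 & I_0\end{smallmatrix}\right)$ by commuting-block cofactors. Your explicit spectral computation $\|\PS_1|_{\MRspace}\| = \|C\|(\|C\|+1)/2 < 1$, which the paper only carries out later when deriving $\rhoS$, is a welcome justification of the series' convergence that the paper's proof leaves implicit.
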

	\begin{proof}
	By~\eqref{eq:Sf},
	\begin{equation} \label{eq:VS}
		\begin{aligned}
			\VS(f) =& \|f\|^2 + 2 \sum_{t=1}^{\infty} \langle f, (P_1P_2/2 + P_2 P_1/2)^t f \rangle  \\
			= & \|f_{01}\|^2 + \|f_{10}\|^2 + \|f_{11}\|^2 +  \langle f_{0} + f_{1}, \Sigma (f_{0} + f_{1}) \rangle,
		\end{aligned}
	\end{equation}
	where
	\[
	\Sigma = I + 2 \sum_{t=1}^{\infty} \left( P_1|_{\MRspace}P_2|_{\MRspace}/2 + P_2|_{\MRspace}P_1|_{\MRspace}/2 \right)^t.
	\]
	By the matrix representation,
	\begin{equation} \label{eq:SigS}
		\begin{aligned}
			\Sigma &= I + 2 \Gamma^* \sum_{t=1}^{\infty} \left( \begin{array}{cc}
				C^2 & CS/2 \\
				CS/2 & 0
			\end{array} \right)^t \Gamma \\
			&= 2 \Gamma^* \left( \begin{array}{cc}
				S^2 & - CS/2 \\
				- CS/2 & I_0
			\end{array} \right)^{-1} \Gamma - I \\
			&=  \SigS.
		\end{aligned}
	\end{equation}
	The desired result then follows from~\eqref{eq:VS}.
	\end{proof}
	
	The following lemma holds for $\SigS$.
	
	\begin{lemma} \label{lem:SigS-SD}
		Let $\SD$ and $\SigS$ be as in~\eqref{eq:SD} and~\eqref{eq:SigS}, respectively.
		Then
		\[
		\SD \leq 2\SigS.
		\]
	\end{lemma}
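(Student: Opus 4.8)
The plan is to reduce the Loewner inequality $\SD \le 2\SigS$ to the positivity of a single block operator built from $C$, and then verify that positivity by a $2\times 2$ scalar computation, exactly as Lemma~\ref{lem:positive} is used in the proofs of Lemmas~\ref{lem:SDSR} and~\ref{lem:SM}. First I would subtract the two matrix representations. Since $A_{\scriptsize\mbox{S}}$ is block-diagonal with both diagonal entries equal to $(I_0-C^2/4)^{-1}$, it commutes with every function of $C$, so the product $A_{\scriptsize\mbox{S}}B_{\scriptsize\mbox{S}}$ is self-adjoint. Writing $D=(I_0-C^2/4)^{-1}$, one obtains $2\SigS-\SD=\Gamma^* M\Gamma$, where $M$ is the $2\times 2$ block operator on $P_1\MRspace\oplus P_1\MRspace$ with entries $M_{11}=2D(I_0+2C^2S^{-2}+C^2/4)-2I_0-4C^2S^{-2}$, $M_{12}=M_{21}=2(D-I_0)CS^{-1}$, and $M_{22}=2D(I_0+C^2/4)-2I_0$. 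Because $\Gamma$ is unitary on $\MRspace$, one has $\Gamma^*M\Gamma\ge 0$ iff $M\ge 0$, so it suffices to establish $M\ge 0$.

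Every block of $M$ is a continuous function of the single self-adjoint operator $C$, whose spectrum lies in $[0,\|C\|]\subset[0,1)$ since $\|C\|<1$; hence all four blocks commute. By the functional calculus, positivity of $M$ is equivalent to pointwise positive semidefiniteness of the scalar symbol $m(x)=\left(\begin{smallmatrix} m_{11}(x) & m_{12}(x)\\ m_{12}(x) & m_{22}(x)\end{smallmatrix}\right)$ for $x\in\sigma(C)$, where $m_{ij}(x)$ is obtained by replacing $C$ with $x$ and $S^2$ with $1-x^2$. This spectral reduction is the step that replaces a direct appeal to Lemma~\ref{lem:positive}: the factor $D$ makes the coefficients operator-valued rather than constant, so that lemma cannot be quoted verbatim, but because all the coefficients are commuting functions of $C$ the same completing-the-square idea goes through pointwise.

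The remaining work is arithmetic. Simplifying the symbols yields $m_{11}(x)=4x^2/[(4-x^2)(1-x^2)]$, $m_{22}(x)=4x^2/(4-x^2)$, and $m_{12}(x)=2x^3/[(4-x^2)\sqrt{1-x^2}]$, all nonnegative on $[0,1)$; moreover $m_{11}(x)m_{22}(x)-m_{12}(x)^2=4x^4/[(4-x^2)(1-x^2)]\ge 0$. Since the diagonal entries and the determinant are nonnegative, $m(x)\ge 0$ for every $x\in[0,\|C\|]$, whence $M\ge 0$ and $\SD\le 2\SigS$. Equivalently, after factoring out $m_{22}(x)\ge 0$, the inner form completes the square as $\tfrac{a^2}{1-x^2}+\tfrac{x}{\sqrt{1-x^2}}ab+b^2=\bigl(\tfrac{a}{\sqrt{1-x^2}}+\tfrac{x}{2}b\bigr)^2+\bigl(1-\tfrac{x^2}{4}\bigr)b^2$, which is the operator-level analogue of the completion used in Lemma~\ref{lem:positive}.

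The main obstacle is not conceptual but bookkeeping: the presence of $D=(I_0-C^2/4)^{-1}$ blocks the constant-coefficient form required by Lemma~\ref{lem:positive}, and the symbol entries must be cleared of their nested denominators $4-x^2$ and $1-x^2$ before the determinant collapses to the clean expression above. Once the reduction to $m(x)$ is in place, checking the two diagonal inequalities and the determinant inequality is routine.
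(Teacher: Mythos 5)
Your proof is correct, and the scalar symbol computations check out ($m_{11}(x)=4x^2/[(4-x^2)(1-x^2)]$, $m_{22}(x)=4x^2/(4-x^2)$, $m_{12}(x)=2x^3/[(4-x^2)\sqrt{1-x^2}]$, determinant $4x^4/[(4-x^2)(1-x^2)]\ge 0$), but it takes a genuinely different route from the paper. The paper never forms the difference $2\SigS-\SD$; instead it exploits the factorization $\SigS=\Gamma^* A_{\scriptsize\mbox{S}} B_{\scriptsize\mbox{S}}\Gamma$ in two soft Loewner steps: since $\|C\|<1$, $A_{\scriptsize\mbox{S}}\geq I$, and because $(A_{\scriptsize\mbox{S}}-I)^{1/2}$ commutes with the positive semi-definite $B_{\scriptsize\mbox{S}}$, one gets $\SigS-\Gamma^* B_{\scriptsize\mbox{S}}\Gamma=\Gamma^*(A_{\scriptsize\mbox{S}}-I)^{1/2}B_{\scriptsize\mbox{S}}(A_{\scriptsize\mbox{S}}-I)^{1/2}\Gamma\geq 0$; then $B_{\scriptsize\mbox{S}}$ minus the block matrix of $\frac{1}{2}\SD$ is $\mathrm{diag}(C^2/4,\,C^2/4)\geq 0$, so $\Gamma^* B_{\scriptsize\mbox{S}}\Gamma\geq\frac{1}{2}\SD$. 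That argument is shorter, stays entirely within constant-coefficient block manipulations of the kind codified in Lemma~\ref{lem:positive}, and sidesteps the operator-valued coefficient $D=(I_0-C^2/4)^{-1}$ that forces you into the functional-calculus reduction; its only nontrivial checkpoint is the positivity of $B_{\scriptsize\mbox{S}}$, which the paper asserts as straightforward. Your approach costs more machinery (the identification of $M_2$ of the commutative C*-algebra generated by $C$ with matrix-valued continuous functions on $\sigma(C)$, which you invoke correctly but somewhat informally) and more bookkeeping, but it buys an explicit closed form for the gap $2\SigS-\SD$ and shows the symbol's determinant is strictly positive off $x=0$, which the paper's chain of inequalities does not reveal.
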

	
	\begin{proof}
		Recall that
		\[
		\SigS= \Gamma^*  A_{\scriptsize\mbox{S}} B_{\scriptsize\mbox{S}} \Gamma,
		\]
		where $A_{\scriptsize\mbox{S}}$ and $B_{\scriptsize\mbox{S}}$ are given in Proposition~\ref{pro:VS}.
		Since $0 \leq \|C\| < 1$, $A_{\scriptsize\mbox{S}} \geq I$.
		Moreover, $(A_{\scriptsize\mbox{S}} - I)^{1/2}$ commutes with $B_{\scriptsize\mbox{S}}$.
		It is straightforward to verify that $B_{\scriptsize\mbox{S}}$ is positive semi-definite.
		Then
		\[
		\SigS - \Gamma^* B_{\scriptsize\mbox{S}} \Gamma = \Gamma^* (A_{\scriptsize\mbox{S}} - I) B_{\scriptsize\mbox{S}} \Gamma = \Gamma^* (A_{\scriptsize\mbox{S}} - I)^{1/2} B_{\scriptsize\mbox{S}} (A_{\scriptsize\mbox{S}} - I)^{1/2}  \Gamma.
		\]
		Hence, $\SigS \geq \Gamma^* B_{\scriptsize\mbox{S}} \Gamma$.
		On the other hand,
		\[
		\Gamma^* B_{\scriptsize\mbox{S}} \Gamma \geq \Gamma^* \left( \begin{array}{cc}
			I_0 + 2C^2S^{-2}  &  CS^{-1} \\
			CS^{-1} & I_0
		\end{array} \right) \Gamma = \frac{1}{2} \SD.
		\]
		The desired result then follows.
	\end{proof}
	
	Recall from Proposition~\ref{pro:VD} that the asymptotic variance for the DG sampler is
	\[	
	\VD(f) = 2\|f_{01}\|^2 + 2\|f_{10}\|^2 + \|f_{11}\|^2 + \langle f_{0} + f_{1}, \SD (f_{0} + f_{1}) \rangle.
	\]
	It then follows from Proposition~\ref{pro:VS} and Lemma~\ref{lem:SigS-SD} that
	\[
	\VD(f) \leq 2\VS(f).
	\]
	On average, one iteration of the random sequence scan algorithm takes twice the time one iteration of the DG algorithm takes.
	Let $\VS^{\dagger}(f)$ be the computation time adjusted asymptotic variance of the random sequence scan algorithm, i.e., $\VS^{\dagger}(f) = (\tau + 1)\VS(f)$.
	Then
	\[
	\VD^{\dagger}(f) \leq \VS^{\dagger}(f).
	\]
	Thus, the random sequence scan sampler is no more efficient than the DG sampler in terms of adjusted asymptotic variance (and thus could be much worse than the RG sampler).
	
	Consider now the convergence rate of the chain.
	Define
	\[
	\rhoS = \exp \left( \sup_{\mu \in L_*^2(\pi),\; \mu \neq \pi} \limsup_{t \to \infty} t^{-1} \log \|\mu \PS_t - \pi \|_* \right).
	\]
	By Proposition~\ref{pro:rate} and Lemma~\ref{lem:At},
	\[
	\rhoS = \|P_1P_2/2 + P_2P_1/2\|  = \left\|P_1|_{\MRspace} P_2|_{\MRspace} /2 + P_2|_{\MRspace} P_1|_{\MRspace} /2 \right\|,
	\]
	where the second equality follows from~\eqref{eq:Sf}.
	Consider the matrix representation~\eqref{eq:HMatrix} and note that
	\[
	\left( \begin{array}{cc}
		C^2 & CS/2 \\
		CS/2 & 0
	\end{array} \right) = U_1 \left( \begin{array}{cc}
		(I_0+C)C/2 & 0 \\
		0 & -C(I_0-C)/2
	\end{array} \right) U_1^*,
	\]
	where
	\[
	U_1 = \left( \begin{array}{cc}
		\sqrt{(I_0+C)/2} & \sqrt{(I_0-C)/2} \\
		\sqrt{(I_0-C)/2} & -\sqrt{(I_0+C)/2}
	\end{array} \right)
	\]
	is unitary and $U_1^* = U_1$ is its adjoint.
	Then 
	\[
	\rhoS = \left\|P_1|_{\MRspace} P_2|_{\MRspace} /2 + P_2|_{\MRspace} P_1|_{\MRspace} /2 \right\| = (1+\|C\|)\|C\|/2.
	\]
	After adjusting for computation time, the convergence rate is
	\[
	\rhoS^{\dagger} := \rhoS^{1/(1+\tau)} = [(1+\|C\|)\|C\|/2]^{1/(1+\tau)}.
	\]
	It is obvious that $\rhoS^{\dagger} \geq \rhoD^{\dagger}$, where $\rhoD^{\dagger} = \|C\|^{2/(1+\tau)}$ is given in Section~\ref{sec:rate}.
	Depending on the value of $\|C\|$,~$r$, and~$\tau$, $\rhoS^{\dagger}$ can be larger or smaller than $\rhoR^{\dagger}(r)$.

\end{document}